\newtheorem{theorem}{Theorem}[section]
\newtheorem{lemma}[theorem]{Lemma}
\newtheorem{proposition}[theorem]{Proposition}
\newtheorem{corollary}[theorem]{Corollary}
\newtheorem{definition}{Definition}[section]
\theoremstyle{remark}
\newtheorem{remark}[theorem]{Remark}
\theoremstyle{definition}
\numberwithin{equation}{section}
\newcommand{\R}{\ensuremath{\mathbb{R}}}
\newcommand{\Levy}{\ensuremath{\mathcal{L}}}
\newcommand{\alp}{\alpha}
\newcommand{\Div}{\mbox{div}}
\newcommand{\dd}{\,\mathrm{d}}
\newcommand{\diver}{\mathrm{div}}
\newcommand{\dell}{\partial}
\newcommand{\indikator}{\mathbf{1}_{|z|\leq 1}}
\newcommand{\veps}{\varepsilon}
\newcommand{\qqquad}{\qquad\quad}
\newcommand{\e}{\text{e}}
\DeclareMathOperator*{\esssup}{ess \, sup}
\DeclareMathOperator{\sgn}{\textup{sign}}
\begin{document}

\title[$L^1$ contraction for degenerate parabolic equations]{$L^1$ contraction
for bounded (non-integrable) solutions of degenerate parabolic equations}


\author[J. Endal]{J\o rgen Endal}
\address[J. Endal]{Department of Mathematical Sciences\\
Norwegian University of Science and Technology (NTNU)\\
N-7491 Trondheim, Norway} 
\email[]{jorgen.endal\@@{}math.ntnu.no}
\urladdr{http://www.math.ntnu.no/ansatte/jorgeen}

\author[E. ~R.~Jakobsen]{Espen R. Jakobsen}
\address[E. R. Jakobsen]{Department of Mathematical Sciences\\
Norwegian University of Science and Technology (NTNU)\\
N-7491 Trondheim, Norway} 
\email[]{erj\@@{}math.ntnu.no}
\urladdr{http://www.math.ntnu.no/\~{}erj/}


\keywords{Degenerate parabolic equations, L1 contraction, entropy
  solutions; non-local/fractional equation, equations of mixed
  hyperbolic/parabolic type, fractional 
  Laplacian,  a priori estimates, uniqueness, existence}


\begin{abstract}
We obtain new $L^1$ contraction results for bounded entropy solutions
of Cauchy problems for degenerate parabolic equations. The equations we consider
have possibly strongly degenerate local or non-local
diffusion terms. As opposed to
previous results, our results apply without any integrability
assumption on the 
solutions. They take the form of partial Duhamel 
formulas and can be seen as quantitative extensions of finite speed of
propagation local $L^1$ contraction results for scalar conservation
laws. A key ingredient in the proofs is a new and non-trivial
construction of a subsolution of a fully non-linear (dual)
equation. Consequences of our results are maximum and comparison
principles, new a priori estimates, and in the non-local case, new 
existence and uniqueness results. 
\end{abstract}

\maketitle


\section{Introduction} 
In this paper, we consider the following Cauchy problem:
\begin{equation}
\begin{cases}
 u_t + \diver\,
f(u)-\mathfrak{L}\,\varphi(u)=g(x,t) &\quad \text{in}\qquad Q_T:=\mathbb{R}^d\times(0,T),\\[0.2cm]
u(x,0)=u_0(x) & \quad \text{on}\qquad \mathbb{R}^d,
\end{cases}
\label{E}
\end{equation}
where $u=u(x,t)$ is the solution, $T>0$, $\diver$ is the
$x$-divergence. The operator $\mathfrak{L}$ will either be the
$x$-Laplacian $\Delta$, or a non-local operator  $\mathcal{L}^\mu$
defined on $C_c^\infty(\mathbb{R}^d)$ as
\begin{equation}
\mathcal{L}^\mu[\phi](x):=\int_{\mathbb{R}^d\setminus\{0\}}\phi(x+z)-\phi(x)-z\cdot D\phi(x)\mathbf{1}_{|z|\leq1}\dd \mu(z),
\label{defgenerator}
\end{equation}
where $\mu$ is a positive Radon measure,
$D$ the $x$-gradient, and $\mathbf{1}_{|z|\leq1}$ the characteristic
function of $|z|\leq1$. Throughout the paper we assume that:
\begin{align}
&f=(f_1,f_2,\ldots,f_d)\in
W_\textup{loc}^{1,\infty}(\mathbb{R},\mathbb{R}^d);
\tag{$\textup{A}_f$}
\label{fassumption}\\
&\varphi\in W_\textup{loc}^{1,\infty}(\mathbb{R}) \text{ and $\varphi$ is non-decreasing }
(\varphi'\geq0) 
;\hspace{3.5cm}
\tag{$\textup{A}_\varphi$}
\label{Aassumption}\\
&g \text{ is measurable and }\int_0^T\|g(\cdot,t)\|_{L^\infty(\mathbb{R}^d)}\dd t<\infty;
\tag{$\textup{A}_g$}
\label{gassumption}\\
&u_0\in L^{\infty}(\mathbb{R}^d);
\tag{$\textup{A}_{u_0}$}
\label{u_0assumption}\\
&\mu\geq0 \text{ is a Radon measure on }\R^d\setminus\{0\},\text{ and
  there is $M\geq0$ such that}\!\hspace{0cm}
\label{muassumption1}\tag{$\textup{A}_{\mu}$}\\ 
&\quad\int_{|z|\leq1}|z|^2\dd \mu(z)+\int_{|z|>1}\e^{M|z|}\dd
\mu(z)<\infty.\nonumber\\
&\text{Assumption \eqref{muassumption1} holds with $M>0$.}
\label{muassumption2}\tag{$\textup{A}_{\mu}^+$}
\end{align}

\begin{remark}
Without loss of generality, we can assume $f(0)=0$ and
  $\varphi(0)=0$ (by adding constants to $f$ and $\varphi$) and $f$
  and $\varphi$ 
  are globally Lipschitz (since solutions are
  bounded). \eqref{muassumption1} implies that
  $\int_{|z|>0}|z|^2\wedge 1\,\dd\mu(z)<\infty$ and $\mu$ is a L\'evy measure. \!\!\! \!\!\!
\label{assumptionremark}
\end{remark}

Equation \eqref{E} is a degenerate parabolic equation. It can be
strongly degenerate, i.e. $\varphi'$ may vanish/degenerate on sets of
positive measure. Equation \eqref{E} can therefore be of mixed
hyperbolic parabolic type. 
The equation is local when
$\mathfrak{L}=\Delta$ and non-local when $\mathfrak{L}=\mathcal
L^\mu$. In the latter case, it is an anomalous diffusion equation:
When \eqref{muassumption1} holds, $\mathcal{L}^{\mu}$ is the generator
of a pure jump L\'evy process, and conversely, any pure jump L\'evy
process has a generator like $\mathcal{L}^{\mu}$. An example is the isotropic
$\alpha$-stable process for $\alpha\in(0,2)$. Here the generator is the
fractional Laplacian $-(-\Delta)^{\frac{\alpha}{2}}$, which can be
defined as a Fourier multiplier, or equivalently, via
\eqref{defgenerator} with $\dd\mu(z)=c_\alp\frac{\dd
  z}{|z|^{d+\alpha}}$ for some $c_\alp>0$ \cite{App09, DrIm06}. If
also \eqref{muassumption2} holds, then 
$\mathcal{L}^{\mu}$ is the generator of a tempered $\alpha$-stable
process \cite{CoTa04}. Almost all L\'evy processes in finance are of
this type. In this paper, this assumption is needed to ensure that the solution of a dual problem belongs to $L^1$; see the discussion on page 3. For more details and examples of non-local operators,
we refer to \cite{App09, CoTa04}.

A large number of physical and financial problems are modeled by
convection-diffusion equations like \eqref{E}. Being very selective we
mention  reservoir simulation
\cite{EsKa00}, sedimentation processes \cite{BuCoBuTo99}, and traffic
flow \cite{Whi74} in the local case; detonation in gases
\cite{Cla02}, radiation hydrodynamics \cite{RoYo07, Ros89}, and
semiconductor growth \cite{Woy01} in the non-local case; and porous
media flow \cite{Vaz07,DeQuRoVa12} and mathematical finance
\cite{CoTa04} in both cases.  

Let us give the main references for the well-posedness of the Cauchy
problem for \eqref{E}, starting with the most classical case
$\mathfrak{L}=\Delta$. For a more complete bibliography, see the  
books \cite{Dib93,Daf10,Vaz07} and the references in \cite{KaRi03}. In
the hyperbolic case where $\varphi' \equiv 0$, we get the scalar
conservation law $\partial_t u+\Div f(u)=0$. The solutions of this
equation can develop discontinuities in finite time and the weak
solutions of the Cauchy problem are generally not unique. 
The most famous uniqueness result relies on the notion of entropy
solutions introduced in \cite{Kru70}.  In the pure diffusive case
where $f' \equiv 0$, there is no more creation of shocks and the
initial-value problem for $\partial_t u-\triangle \varphi(u)=0$ admits
a unique weak solution, cf. \cite{BrCr79}. Much later, the adequate
notion of entropy solutions for mixed hyperbolic parabolic equations
was introduced in~\cite{Car99}.
This paper focuses on an initial-boundary value problem.
For a general well-posedness result applying to the Cauchy problem
\eqref{E} with $\mathfrak{L}=\Delta$, we refer to e.g. \cite{KaRi03}
and \cite{AnMa10,MaTo03}.  

At the same time, there has been a large interest in non-local versions
of these equations (where $\mathfrak{L}=\mathcal L^\mu$). The study of
non-local diffusion terms was probably initiated by
\cite{BiFuWo98}. Now, the well-posedness is quite 
well-understood in the non-degenerate linear case where
$\varphi(u)=u$. Smooth solutions exist and are unique for subcritical
equations~\cite{BiFuWo98,DrGaVo03}, shocks can occur~\cite{AlDrVo07,KiNaSh08}
and weak solutions can be non-unique~\cite{AlAn10} for supercritical
equations, entropy solutions exist and are always
unique~\cite{Ali07,KaUl11}; cf. also e.g. \cite{ChCzSi10} for original
regularizing effects.  Very recently, the well-posedness theory of
entropy solutions was extended in~\cite{CiJa11} to cover the full
problem~\eqref{E}, even for strongly degenerate~$\varphi$. See also
\cite{DeQuRoVa12,BoVa13} on fractional porous medium type equations.

In all the papers on entropy solutions, the authors use 
doubling of variables arguments inspired by Kru\v{z}kov to prove 
$L^1$ contraction estimates. For entropy solutions $u$ and $v$,
the typical estimate when $g=0$ is
\begin{align}
\label{L1contr0}\int_{\R^d} (u(x,t)-v(x,t))^+ \dd x \leq \int_{\R^d}(u(x,0)-v(x,0))^+
\dd x.
\end{align}
From such an estimate the maximum or comparison principle follows: If
$u(x,0)\leq v(x,0)$ a.e., then $u(x,t)\leq v(x,t)$ for all $t>0$ and a.e.
$x$. A priori estimates for the $L^1$, $L^\infty$, and $BV$ norms of
the solutions also follow, estimates which are important e.g. to show
existence, stability, and convergence of approximations. However,
due to the global nature of this contraction estimate, 
it only applies for entropy solutions which satisfy $(u(\cdot,0)-v(\cdot,0))^+\in L^1(\mathbb{R}^d)$. In particular, this estimate cannot be used to obtain 
$L^1$ or $BV$ type estimates when $u(\cdot,0)$ and $v(\cdot,0)$ merely
belong to $L^\infty$ as in this paper. Some of the previous results also need
the further restriction that solutions belong to
$L^1\cap L^\infty$, see \cite{KaRi03,CiJa11}. 
In particular, prior to this paper, there were no well-posedness
results for merely bounded solutions of the non-local variant of
\eqref{E} when  $\varphi$ is non-linear.  

In this paper, we obtain new $L^1$ contraction results for \eqref{E}.
The estimates are more local than 
\eqref{L1contr0} and take the form of a ``partial Duhamel formula''
(see equation \eqref{L1contr3}), 
\begin{equation}
\label{L1contr}
\begin{split}
&\int_{B(x_0, \, M)}(u(x,t)-v(x,t))^+\dd x\leq\int_{B(x_0,M+1+Lt)}\big[\tilde\Phi(\cdot,t)\ast\big(u(\cdot,0)-v(\cdot,0)\big)^+\big](x)\dd x,
\end{split}
\end{equation}
for all $x_0\in\R^d$ and $M>0$, some $L$, and some integrable
function $\tilde\Phi$. See Section \ref{sec:main} for the precise
statements. In \eqref{L1contr}, there is no need to take $(u(\cdot,0)-v(\cdot,0))^+\in L^1(\mathbb{R}^d)$, and we will prove that the result applies to arbitrary bounded entropy solutions
$u,v$. In addition to this new and more quantiative form of the
$L^1$ contraction, we obtain as consequences new maximum/comparison principles
and BV estimates for both local and non-local versions of \eqref{E}, and in the non-local case, we obtain the first 
well-posedness result to hold for merely bounded entropy solution of \eqref{E}.

Estimate \eqref{L1contr} can be seen as a quantitative extension of the
finite speed of propagation type of estimate that holds for scalar
conservation laws \cite{Kru70,Daf10}. A similar (Duhamel type) result
has already been obtained for fractional conservation laws in
\cite{Ali07}. 
See also \cite{DrGaVo03,DrIm06} for more Duhamel formulas for fractional
conservation laws. The proof in \cite{Ali07} consists in establishing a
so-called Kato inequality for the equation, making a clever
choice of the test function to have cancellations, and then conclude in a
fairly standard way. Even if it is not written like that, the test
function is chosen to be a subsolution of a 
sort of dual equation that appears from the Kato inequality. In
\cite{Ali07}, the principal part of the ``dual equation'' is the
 (linear) fractional heat equation which can be solved exactly 
using the fundamental solution. The test function is therefore defined
via a Duhamel like formula involving the fractional heat kernel (the
function $\tilde\Phi$ in this case). 

In this paper, we formalize this proceedure and apply it to the more
difficult problems with non-linear degenerate diffusions. To do that,
we derive Kato inequalities for bounded entropy solutions and identify
the useful ``dual equations'' from them. In the general case, we find 
that the ``dual equations'' are fully non-linear degenerate parabolic
equations. These equations do not have smooth solutions in general,
but we then prove that there exist bounded continuous generalized
solutions (viscosity solutions) that belong to $L^1$. In this step, assumption \eqref{muassumption2} is needed in the non-local case. After several
regularization proceedures and Duhamel type of formulas, we produce a
test function that gives the necessary cancelations. Since this test
function is not based on a fundamental solution, or any $\tilde\Phi$
which is mass preserving, we can only conclude after additional
approximation steps.  

In effect, we have introduced a new way of obtaining $L^1$ contraction
estimates for degenerate parabolic equations. The new proof exploits a
``dual equation'' which in this case is pretty bad too, a degenerate
fully non-linear equation that can be best analyzed through the theory
of viscosity solutions \cite{CrIsLi92}. The proof can therefore be seen as
a sort of duality argument, and it is as far we know, the first proof
were viscosity solution methods were used as a key ingredient in a
contraction proof for entropy solutions.

The rest of this paper is organised as follows: In Section 2, we give
the definitions of entropy solutions and present and discuss our main
results. Their main consequences are discussed in Section 3. In
Section 4, we derive Kato type and other auxiliary inequalities. And
finally, in Section 5, we give the proofs of our main results. 

\subsection*{Notation} 
For $x\in\mathbb{R}$, we let $x^+=\, \text{max}\{x,0\}$, $x^-=(-x)^+$,
and $\sgn(x)$ is $\pm1$ for $\pm x>0$ and $0$ for $x=0$. We let
$B(x,r)=\{y\in\mathbb{R}^d : |x-y|< r\}$, and the indicator function
$\mathbf{1}_{\mathcal{A}}$ is $1$ on the set $\mathcal{A}$ and $0$ 
on the complement $\mathcal{A}^C$. By $L_\phi$ and  
$\text{supp}\,\phi$ we denote the Lipschitz constant and
support of a function $\phi$, derivatives are denoted by $'$,
$\frac{d}{dt}$, $\dell_{x_i}$, and $D\phi$ and $D^2\phi$ denote the $x$-gradient and Hessian matrix of $\phi$. Convolution is defined as
$f\ast g(x)=\left[f\ast g\right](x)=\int_{\R^d}f(x-y)g(y)\dd y$ (the brackets are dropped whenever the notation is not ambiguous). If $\mu$ is a Borel measure, then
$\mu^*$ is  defined as $\mu^*(B)=\mu(-B)$ for all Borel sets on
$\mathbb{R}^d\setminus \{0\}$. The $L^2$ adjoint of an operator $A$ is denoted by $A^*$, and the reader may check that
$(\mathcal L^{\mu})^*=\mathcal L^{\mu^*}$.

We use standard notation for $L^p$, $BV$, and $H^1$ spaces, $C_b$ and
$C_c^\infty$ are the spaces of bounded continuous functions and smooth
functions with compact support.  We use the following norm and semi-norm:
\begin{align*}
&  \|\phi\|_{C([0,T];L^1(\R^d))}:=\esssup_{t\in
  [0,T]}\int_{\R^d}|\phi(x,t)|\dd x,\\
& |\psi|_{BV(\R^d)}:= 
\sup_{h\neq0}\int_{\R^d}\frac{|\psi(x+h)-\psi(x)|}{|h|}\dd x. 
\end{align*}
The $|\cdot|_{BV}$ semi-norm is equivalent to standard definition of
the total variation, see \cite[Lemma A.1]{HoRi07} or \cite[Lemma
A.2]{AlCiJa12}. We define the spaces $C([0,T]; L^1(\mathbb{R}^d))$ and $C([0,T]; L_\text{loc}^1(\mathbb{R}^d))$ in the usual way. E.g., the space $C([0,T]; L_\text{loc}^1(\mathbb{R}^d))$ is  the space of measurable functions $u:\R^d\times[0,T]\to\R$ satisfying $u(\cdot,t)\in
L^1_\text{loc}(\mathbb{R}^d)$ for every $t\in[0,T]$,
$\max_{t\in[0,T]}\int_K|u(x,t)|\dd x <\infty$,  and
$\int_K|u(x,t)-u(x,s)|\dd x \to0$ when $t\to s$ for all compact
$K\subset \mathbb{R}^d$ and $s\in[0,T]$. 

For the rest of the paper, we fix three families of mollifiers
$\omega_\veps$, $\hat\omega_\veps$, $\rho_\veps$ defined by
\begin{align}
\omega_\varepsilon(\sigma):=\frac{1}{\varepsilon}\omega\left(\frac{\sigma}{\varepsilon}\right)
\label{mollifierspace}
\end{align}
for fixed $0\leq\omega\in C^\infty_c(\R)$ satisfying $\text{supp}\,
\omega\subseteq [-1,1]$, $\omega(\sigma)=\omega(-\sigma),$ 
$\int\omega=1$;
\begin{equation}
\hat{\omega}(x)=\omega(x_1)\dots\omega(x_d)\qquad\text{and}\qquad \hat{\omega}_\veps(x)=\frac{1}{\varepsilon^{d}}\hat{\omega}\Big(\frac{x}{\varepsilon}\Big)
\label{domega}
\end{equation}
for $x=(x_1,\ldots,x_d)\in\R^d$; and
\begin{align}
&\rho_\delta(\sigma,\tau):=\frac{1}{\delta^{d+2}}\rho\left(\frac{\sigma}{\delta},\frac{\tau}{\delta^2}\right)
\label{mollifierspacetime}
\end{align} 
for fixed $0\leq\rho\in C_c^\infty(Q_T)$, $\textup{supp}\,\rho\subseteq
B(0,1)\times(0,1)$, $\rho(\sigma,\tau)=\rho(-\sigma,-\tau)$,
$\int\rho=1$.

\section{Entropy formulation and main results}
\label{sec:main}
In this section, we give the definitions of entropy solutions of \eqref{E}
and then present our main results. We will use the following splitting
\begin{equation*}
\mathcal{L}^\mu[\phi](x)=\mathcal{L}_r^\mu[\phi](x)+\mathcal{L}^{\mu,r}[\phi](x)+b^{\mu,r}\cdot D\phi(x),
\end{equation*}
for $\phi\in C_c^{\infty}(Q_T)$, $r>0$ and $x\in \mathbb{R}^d$, where
\begin{equation*}
\begin{split}
\mathcal{L}_r^\mu[\phi](x):=&\int_{0<|z|\leq r} \phi(x+z)-\phi(x)-z\cdot D\phi \indikator \dd \mu(z),\\
\mathcal{L}^{\mu,r}[\phi](x):=&\int_{|z|>r}\phi(x+z)-\phi(x)\dd \mu(z),\\
b^{\mu,r}:=&-\int_{|z|>r} z\indikator\dd \mu(z).
\end{split}
\end{equation*}
Below we will use the Kru\v{z}kov entropy-entropy flux pairs, $|u-k|$ and
$\text{sign}(u-k)(f(u)-f(k))$, and the
corresponding semi entropy-entropy flux pairs,
\begin{equation*}
(u-k)^\pm\qquad\text{and}\qquad \pm\textup{sign}(u-k)^\pm(f(u)-f(k))
\qquad  \text{for all } k\in\mathbb{R}.
\end{equation*}

\begin{definition}[Entropy solutions]
Let $\mathfrak{L}=\Delta$. A function $u\in L^{\infty}(Q_T)\cap
C([0,T];L_\textup{loc}^{1}(\R^d))$ is  
\begin{enumerate}[(a)]
\item an entropy subsolution of \eqref{E} if
\begin{enumerate}[i)]
\item for all non-negative $\phi\in C_c^\infty(Q_T)$ and all $k\in\R$
\begin{equation}
\begin{split}
&\iint_{Q_T}(u-k)^+\phi_t+\sgn(u-k)^+[f(u)-f(k)]\cdot D\phi\,\dd x \dd t\\
&+\iint_{Q_T}(\varphi(u)-\varphi(k))^+\Delta\phi\,\dd x \dd t\\
&+\iint_{Q_T}\sgn(u-k)^+g\,\phi\, \dd x \dd t\geq0;
\end{split}
\label{defentsubsolnlap}
\end{equation}
\item $\varphi(u)\in L^2((0,T);H_\textup{loc}^1(\R^d))$;
\item  $u(\cdot,0)\leq u_0$ for a.e. $x\in\R^d$.
\end{enumerate}
\item an entropy supersolution of \eqref{E} if
\begin{enumerate}[i)]
\item for all non-negative $\phi\in C_c^\infty(Q_T)$ and all $k\in\R$
\begin{equation}
\begin{split}
&\iint_{Q_T}(u-k)^-\phi_t-\sgn(u-k)^-[f(u)-f(k)]\cdot D\phi\,\dd x \dd t\\
&+\iint_{Q_T}(\varphi(u)-\varphi(k))^-\Delta\phi\,\dd x \dd t\\
&+\iint_{Q_T}-\sgn(u-k)^-g\,\phi\, \dd x \dd t\geq0;
\end{split}
\label{defentsupersolnlap}
\end{equation}
\item $\varphi(u)\in L^2((0,T);H_\textup{loc}^1(\R^d))$;
\item $u(\cdot,0)\geq u_0$ for a.e. $x\in\R^d$.
\end{enumerate}
\item an entropy solution of \eqref{E} if it is both and entropy subsolution and an entropy supersolution.
\end{enumerate}
\label{defentsolnlap}
\end{definition}

\begin{definition}[Entropy solutions]
Let $\mathfrak{L}=\mathcal{L}^\mu$. A function $u\in L^\infty(Q_T)\cap C([0,T];L_\textup{loc}^1(\R^d))$ is
\begin{enumerate}[(a)]
\item an entropy subsolution of \eqref{E} if
\begin{enumerate}[i)]
\item for all non-negative $\phi\in C_c^{\infty}(Q_T)$ and all $k\in\mathbb{R}$
\begin{equation}
\begin{split}
&\iint_{Q_T}(u-k)^+\dell_t\phi+\sgn(u-k)^+[f(u)-f(k)]\cdot D\phi\,\dd x \dd t\\
&+\iint_{Q_T}(\varphi(u)-\varphi(k))^+\left(\mathcal{L}_r^{\mu^*}[\phi]+b^{\mu^*,r}\cdot D\phi\right)+\sgn(u-k)^+\mathcal{L}^{\mu,r}[\varphi(u)]\phi \,\dd x \dd t\\
&+\iint_{Q_T}\sgn(u-k)^+g\,\phi\, \dd x \dd t\geq0;
\end{split}
\label{defentsubsoln}
\end{equation}
\item $u(\cdot,0)\leq u_0(\cdot)$ for a.e. $x\in\mathbb{R}^d$.
\end{enumerate}
\item an entropy supersolution of \eqref{E} if
\begin{enumerate}[i)]
\item for all non-negative $\phi\in C_c^{\infty}(Q_T)$ and all $k\in\mathbb{R}$
\begin{equation}
\begin{split}
&\iint_{Q_T}(u-k)^-\dell_t\phi-\sgn(u-k)^-[f(u)-f(k)]\cdot D\phi\,\dd x \dd t\\
&+\iint_{Q_T}(\varphi(u)-\varphi(k))^-\left(\mathcal{L}_r^{\mu^*}[\phi]+b^{\mu^*,r}\cdot D\phi\right)-\sgn(u-k)^-\mathcal{L}^{\mu,r}[\varphi(u)]\phi \,\dd x \dd t\\
&+\iint_{Q_T}-\sgn(u-k)^-g\,\phi\, \dd x \dd t\geq0;
\end{split}
\label{defentsupersoln}
\end{equation}
\item $u(\cdot,0)\geq u_0(\cdot)$ for a.e. $x\in\mathbb{R}^d$.
\end{enumerate}
\item an entropy solution of \eqref{E} if it is both an entropy subsolution and an entropy supersolution.
\end{enumerate}
\label{defentsoln}
\end{definition}

\begin{remark}
\begin{enumerate}[(a)]
\item Similar definitions are given e.g. in \cite[Definition
  3.4]{MaTo03} and \cite[Definition 5.1]{CiJa11}.
\item Since an entropy solution $u\in C([0,T];L_\text{loc}^1(\mathbb{R}^d))$ and $u(\cdot,0)=u_0(\cdot)$ a.e., the initial condition is imposed in a strong sense: $u(\cdot,t)\to u_0(\cdot)$ in $L_\textup{loc}^1$ as $t\to0^+$.
\item By \eqref{fassumption}, \eqref{Aassumption}, and $u\in
  L^{\infty}(Q_T)$,  $f(u)$ and $\varphi(u)$ are in $L^{\infty}(Q_T)$. 
\item By c) and \eqref{gassumption}, all
  integrals in \eqref{defentsubsolnlap} and
  \eqref{defentsupersolnlap} are well-defined. 
\item By c) and \eqref{gassumption}, the first and
  third integral in 
  \eqref{defentsubsoln} and \eqref{defentsupersoln} are
  well-defined. Since $\mathcal{L}_r^{\mu^*}[\phi]\in
  C_c^{\infty}(Q_T)$ for $\phi\in C_c^{\infty}(Q_T)$ and
  $\mathcal{L}^{\mu,r}[\varphi(u)]\in L^{\infty}(Q_T)$ for
  $\varphi(u)\in L^{\infty}(Q_T)$, then by c) the second integral is
  also well-defined. Since $u$ is a Lebesgue measurable function,  it
  is not immediatly clear that $\varphi(u)$ is $\mu$-measurable and
  $\mathcal{L}^{\mu,r}[\varphi(u)]$ is point-wisely well-defined. We refer to
  Remark 2.1 and Lemma 4.2 in \cite{AlCiJa12} for a discussion and
  proof that this is actually the case. 
\end{enumerate}
\label{defentsolnremark}
\end{remark}

\begin{lemma}
$u(x,t)$ is an entropy solution of \eqref{E} in the sense of
Definition \ref{defentsolnlap} or \ref{defentsoln}  if and only if
$u(x,t)$ is an entropy solution in the usual sense. 
\label{solutioniffsubsuper}
\end{lemma}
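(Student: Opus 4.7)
The plan is to pass between the semi-Kru\v{z}kov entropy inequalities of Definitions \ref{defentsolnlap} and \ref{defentsoln}, based on the pairs $(u-k)^\pm$, and the single Kru\v{z}kov entropy inequality with $|u-k|$ (the ``usual'' formulation as in \cite{MaTo03,CiJa11}), using the pointwise identities
\[
|u-k|=(u-k)^++(u-k)^-,\qquad u-k=(u-k)^+-(u-k)^-,
\]
and the analogous decompositions of $\sgn(u-k)(f(u)-f(k))$, of $|\varphi(u)-\varphi(k)|$ (where the monotonicity of $\varphi$ from \eqref{Aassumption} is essential for $|\varphi(u)-\varphi(k)|=(\varphi(u)-\varphi(k))^++(\varphi(u)-\varphi(k))^-$), and of $\sgn(u-k)\,g$.

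One direction is a direct summation: if $u$ satisfies both semi-entropy inequalities in Definition \ref{defentsolnlap} (or \ref{defentsoln}) with the same $k\in\R$ and the same non-negative $\phi\in C_c^\infty(Q_T)$, then adding them term by term collapses every pair of semi-entropy contributions into the corresponding Kru\v{z}kov quantity; the $H^1_\textup{loc}$-regularity of $\varphi(u)$ in the local case and the strong initial trace $u(\cdot,0)=u_0$ transfer without change.

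For the converse I first extract a distributional formulation of \eqref{E} from the Kru\v{z}kov inequality. Taking $k>\|u\|_{L^\infty(Q_T)}$ forces $\sgn(u-k)\equiv -1$ and $|u-k|=k-u$, so the Kru\v{z}kov inequality reduces to
\[
\iint_{Q_T}u\,\phi_t+f(u)\cdot D\phi+\varphi(u)\,\mathfrak{L}^*\phi+g\phi\,\dd x\dd t\leq 0,
\]
while $k<-\|u\|_{L^\infty(Q_T)}$ yields the reverse inequality; equality follows. Combining this weak formulation with the Kru\v{z}kov inequality via $(u-k)^\pm=\tfrac12(|u-k|\pm(u-k))$, and the matching splittings of the flux, diffusion and source terms, recovers the $(u-k)^+$ and $(u-k)^-$ semi-entropy inequalities by taking, respectively, half the sum and half the difference.

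The principal bookkeeping obstacle is the non-local case, where each semi-entropy inequality in Definition \ref{defentsoln} contains two non-local contributions: $(\varphi(u)-\varphi(k))^\pm(\mathcal{L}_r^{\mu^*}[\phi]+b^{\mu^*,r}\cdot D\phi)$ acting on $\phi$, and $\pm\sgn(u-k)^\pm\mathcal{L}^{\mu,r}[\varphi(u)]\phi$ acting on $\varphi(u)$. Summing sub and super contributions gives $|\varphi(u)-\varphi(k)|(\mathcal{L}_r^{\mu^*}[\phi]+b^{\mu^*,r}\cdot D\phi)$ and $\sgn(u-k)\mathcal{L}^{\mu,r}[\varphi(u)]\phi$, matching the Kru\v{z}kov-type formulation of \cite[Definition 5.1]{CiJa11}. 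Since the splitting $\mathcal{L}^\mu=\mathcal{L}_r^\mu+\mathcal{L}^{\mu,r}+b^{\mu,r}\cdot D$ is linear in both $\phi$ and $\varphi(u)$, $r>0$ may be fixed once and for all, so the half-sum/half-difference procedure of the previous paragraph goes through term by term. The measurability issue flagged in Remark \ref{defentsolnremark}(e) for $\mathcal{L}^{\mu,r}[\varphi(u)]$ is present in both formulations and handled identically via \cite{AlCiJa12}.
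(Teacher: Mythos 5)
Your proposal is correct and follows essentially the same route as the paper: summing the two semi-entropy inequalities to recover the Kru\v{z}kov formulation, and, for the converse, extracting the weak form of the equation (which you make explicit by taking $|k|>\|u\|_{L^\infty(Q_T)}$) and then taking half-sums and half-differences with the entropy inequality. The only minor quibble is that $|a|=a^++a^-$ holds for any $a$; what the monotonicity of $\varphi$ actually buys is the identification $(\varphi(u)-\varphi(k))^\pm=\sgn(u-k)^\pm(\varphi(u)-\varphi(k))$ needed to match the semi-entropy fluxes.
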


\begin{proof}
Since $|u-k|=(u-k)^++(u-k)^-$ and $\sgn(u-k)=\sgn(u-k)^+-\sgn(u-k)^-$,
\begin{gather*}
\eqref{defentsubsolnlap}+\eqref{defentsupersolnlap}\qquad\text{or}\qquad\eqref{defentsubsoln}+\eqref{defentsupersoln}
\\
\Downarrow\\
|u-k|_t+\diver \Big(\sgn(u-k)[f(u)-f(k)]\Big)-\mathfrak{L}\,\big|\varphi(u)-\varphi(k)\big|-\sgn(u-k)g\leq 0
\end{gather*} 
in $\mathcal{D'}(Q_T)$, which is the usual definition in terms of Kru\v{z}kov entropy-entropy fluxes.

Part a) of Definitions \ref{defentsoln} and \ref{defentsolnlap} can be
 obtained from the usual definition in a similar way. First we check that
 $u-k$ satify
$$(u-k)_t+\diver\,\big(
f(u)-f(k)\big)-\mathfrak{L}\big(\varphi(u)-\varphi(k)\big)-g=0\quad
\text{in}\quad \mathcal{D}'(Q_T). $$
Then we add this equation to the entropy inequality for $u$. Since
this inequality involves the Kru\v{z}kov flux $|u-k|$, the result follows by the
 following identities 
\begin{align*}
&|u-k|+(u-k)=2(u-k)^+,\\
&\sgn(u-k)(f(u)-f(k))+\big(f(u)-f(k)\big)=2\sgn(u-k)^+(f(u)-f(k)),
\end{align*}
and a similar one for the $\varphi(u)$-terms. The proof of part b) is similar.
\end{proof}

\subsection*{Main results}
To give the main results, we introduce the functions $\tilde{K}$ and
$\Phi$. We define 
\begin{equation}
\tilde{K}(x,t)=\mathcal{F}^{-1}(\e^{-t|2\pi\xi|^\alpha})(x) \quad \text{for $\alpha\in(0,2]$}, 
\label{heatkernel}
\end{equation}
where $\mathcal{F}(\phi)(\xi)=\int_{\R^d}\e^{-2\pi\textup{i} \xi\cdot
  x}\phi(x)\dd x$. Then $\tilde{K}$ is a fundamental solution satifying
\begin{equation*}
\begin{cases}\dell_t \tilde{K}-\mathfrak{L^*}\tilde{K}=0, & t>0,\\[0.2cm]
\tilde{K}(x,0)=\delta_0,&
\end{cases}
\end{equation*}
for $\mathfrak{L}^*=\mathfrak{L}=-(-\Delta)^{\frac{\alpha}{2}}$, where
$\delta_0$ is the Dirac measure centred at the origin. Furthermore,
$\Phi$ is the (non-smooth viscosity) solution of  
\begin{equation}
\begin{cases}\dell_t \Phi-(\mathfrak{L}^{*}\Phi)^+=0 & \text{in}\quad\R^d\times(0,\tilde{T}),\\[0.2cm]
\Phi(x,0)=\Phi_0(x) & \text{on}\quad\R^d,
\end{cases}
\label{viscositysoln}
\end{equation} 
for some $\Phi_0\in C_c^{\infty}(\R^d)$. 

\begin{lemma}
Let $\tilde{K}$ be defined by \eqref{heatkernel}, then it has the following properties
\begin{enumerate}[i)]
\item $\tilde{K}$ is non-negative, smooth, and bounded for $t>\delta$ for all $\delta>0$;
\item $\int_{\mathbb{R}^d}\tilde{K}(x,t)\dd  x =1$;
\item $\{\tilde{K}(\cdot,t)\}_{t>0}$ is an approximate unit as $t\to0$;
\item $\tilde{K}(x,t)=\tilde{K}(-x,t)$ for all $t>0$ and $x\in\mathbb{R}^d$.
\end{enumerate}
\label{propofheatkernel}
\end{lemma}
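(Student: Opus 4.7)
The plan is to read each of the four properties from the explicit Fourier representation \eqref{heatkernel}, using only elementary Fourier manipulations except for non-negativity in (i), which I expect to be the main obstacle.

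Properties (ii) and (iv) are one-liners. For (ii), evaluating $\mathcal{F}(\tilde{K}(\cdot,t))$ at $\xi=0$ gives $\int_{\R^d}\tilde{K}(x,t)\dd x = \e^{-t|0|^\alpha}=1$. For (iv), the change of variables $\xi\mapsto -\xi$ inside $\mathcal{F}^{-1}$ combined with $|{-\xi}|^\alpha=|\xi|^\alpha$ yields $\tilde{K}(-x,t)=\tilde{K}(x,t)$.

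For the smoothness and boundedness part of (i), note that for every $t\geq\delta>0$ the symbol $\e^{-t|2\pi\xi|^\alpha}$ is dominated by $\e^{-\delta|2\pi\xi|^\alpha}$, so $|\xi|^\beta\e^{-t|2\pi\xi|^\alpha}$ is integrable for every multi-index $\beta$. Differentiation under the inverse Fourier integral then gives $\tilde{K}(\cdot,t)\in C^\infty(\R^d)$ with each derivative uniformly bounded on $\{t\geq\delta\}$. Non-negativity is the delicate step. For $\alpha=2$ it is read off from the Gaussian formula $\tilde{K}(x,t)=(4\pi t)^{-d/2}\e^{-|x|^2/(4t)}$. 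For $\alpha\in(0,2)$ I would invoke Bochner subordination:
\begin{equation*}
\e^{-t|2\pi\xi|^\alpha}=\int_0^\infty \e^{-s|2\pi\xi|^2}\dd\eta_t(s),
\end{equation*}
where $\eta_t$ is the one-sided $\alpha/2$-stable probability measure on $[0,\infty)$. Inverting the Fourier transform and exchanging the order of integration (justified by Fubini and the Gaussian decay), $\tilde{K}(\cdot,t)$ is exhibited as a positive mixture of Gaussian heat kernels, hence non-negative. Equivalently, $\e^{-t|2\pi\xi|^\alpha}$ is the characteristic function of the isotropic $\alpha$-stable law (Schoenberg), so its Fourier inverse is a probability density.

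Finally, for (iii) I would use the scaling identity
\begin{equation*}
\tilde{K}(x,t)=t^{-d/\alpha}\tilde{K}\bigl(t^{-1/\alpha}x,1\bigr),
\end{equation*}
which is immediate from $\mathcal{F}\bigl(t^{-d/\alpha}\tilde{K}(t^{-1/\alpha}\cdot,1)\bigr)(\xi)=\mathcal{F}(\tilde{K}(\cdot,1))(t^{1/\alpha}\xi)=\e^{-t|2\pi\xi|^\alpha}$. Combined with (ii) and the non-negativity just proved, $\{\tilde{K}(\cdot,t)\}_{t>0}$ satisfies the standard mollifier criterion: for every $\eps>0$,
\begin{equation*}
\int_{|x|>\eps}\tilde{K}(x,t)\dd x=\int_{|y|>\eps\, t^{-1/\alpha}}\tilde{K}(y,1)\dd y\longrightarrow 0\quad\text{as }t\to 0^+,
\end{equation*}
since $\tilde{K}(\cdot,1)\in L^1(\R^d)$. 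Together with $\int_{\R^d}\tilde{K}(\cdot,t)\dd x=1$, this gives the approximate identity property $\tilde{K}(\cdot,t)\ast\phi\to\phi$ in the usual senses.
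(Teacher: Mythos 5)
Your argument is correct. Note that the paper itself does not prove this lemma: it records the result as classical and simply cites Alibaud \cite{Ali07}, so there is no in-paper proof to compare against. Your route --- evaluating the symbol at $\xi=0$ for the unit mass, using $|-\xi|^\alpha=|\xi|^\alpha$ for the symmetry, dominating $|\xi|^{|\beta|}\e^{-t|2\pi\xi|^\alpha}$ by the integrable $|\xi|^{|\beta|}\e^{-\delta|2\pi\xi|^\alpha}$ for smoothness and boundedness, Bochner subordination (equivalently, Schoenberg/L\'evy: $\e^{-t|2\pi\xi|^\alpha}$ is the characteristic function of the isotropic $\alpha$-stable law) for non-negativity, and the self-similar scaling $\tilde K(x,t)=t^{-d/\alpha}\tilde K(t^{-1/\alpha}x,1)$ for the approximate-identity property --- is the standard one and all steps are sound. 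One small ordering remark: to read off (ii) by evaluating the Fourier transform at $\xi=0$ as $\int_{\R^d}\tilde K(x,t)\dd x$, you need to know first that $\tilde K(\cdot,t)\in L^1(\R^d)$, which does not follow merely from the symbol being integrable in $\xi$; but your subordination formula supplies this, since by Tonelli $\tilde K(\cdot,t)$ is a positive mixture of unit-mass Gaussian kernels and hence integrable with total mass one. So the non-negativity/subordination step should logically precede (ii), after which everything goes through as you wrote it.
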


This result is classical and can be found in e.g. \cite{Ali07}.

\begin{lemma}\label{Phiprop}
  Assume \eqref{fassumption}, \eqref{Aassumption}, \eqref{gassumption}
  hold, that $\mathfrak{L}=\Delta$ or $\mathfrak{L}=\mathcal{L}^\mu$ and
  \eqref{muassumption2} holds, and that $0\leq \Phi_0\in
  C_c^\infty(Q_T)$. Let $\tilde{T}:=\max\{T,L_\varphi
  T\}$ where $L_\varphi$ is the Lipschitz constant of $\varphi$. Then
  there exists a unique viscosity solution $\Phi(x,t)$ of
  \eqref{viscositysoln} such that 
\begin{equation*}
0\leq \Phi\in C_b(Q_{\tilde{T}})\cap C([0,\tilde{T}];L^1(\R^d)).
\end{equation*}
\end{lemma}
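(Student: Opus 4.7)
The plan is to treat \eqref{viscositysoln} as a fully non-linear degenerate parabolic Hamilton--Jacobi--Bellman equation by rewriting it as
\[
\dell_t\Phi - \sup_{\theta\in\{0,1\}}\theta\,\mathfrak{L}^*\Phi = 0.
\]
Both $\mathfrak{L}^*=\Delta$ and $\mathfrak{L}^*=\mathcal L^{\mu^*}$ (the latter under \eqref{muassumption1}, so that $\mu^*$ is a L\'evy measure by Remark \ref{assumptionremark}) fit the standard viscosity framework of \cite{CrIsLi92} and its non-local extensions. The comparison principle combined with Perron's method will then yield a unique bounded continuous viscosity solution $\Phi\in C_b(Q_{\tilde T})$ with $\Phi(\cdot,0)=\Phi_0$.

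Next I would pin down the pointwise bounds by barrier comparison. The constants $\underline\Phi\equiv 0$ and $\overline\Phi\equiv\|\Phi_0\|_{L^\infty(\R^d)}$ are a sub- and a supersolution respectively, since $\mathfrak{L}^*$ annihilates constants, so comparison gives $0\le\Phi(x,t)\le\|\Phi_0\|_{L^\infty(\R^d)}$ on $Q_{\tilde T}$.

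The heart of the lemma is the $L^1$ bound, and this is the step where \eqref{muassumption2} enters. I plan to construct an exponentially decaying classical supersolution of the form
\[
\Psi(x,t):=A\,\e^{Kt}\,\e^{-M\sqrt{1+|x|^2}},
\]
with $M$ as in \eqref{muassumption2}, $A$ large enough that $\Psi(\cdot,0)\geq\Phi_0$, and $K$ to be chosen. The profile $\psi(x):=\e^{-M\sqrt{1+|x|^2}}$ has first and second derivatives bounded by a multiple of $\psi$ itself. Splitting the jump integral at $|z|=1$ and using the Taylor cancellation with $z\cdot D\psi$ for the small jumps, I would show
\[
|\mathcal{L}^{\mu^*}[\psi](x)|\leq C\,\psi(x)\Bigl(\int_{|z|\leq 1}|z|^2\dd\mu^*(z)+\int_{|z|>1}\e^{M|z|}\dd\mu^*(z)\Bigr),
\]
where the bracket is finite by \eqref{muassumption2} (noting $\mu^*$ inherits the same moments as $\mu$); the analogous bound for $\Delta\psi$ is immediate. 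Choosing $K$ larger than this constant yields $\dell_t\Psi\geq|\mathfrak{L}^*\Psi|\geq(\mathfrak{L}^*\Psi)^+$ pointwise, so $\Psi$ is a smooth, hence viscosity, supersolution. Comparison then gives $0\le\Phi(\cdot,t)\leq\Psi(\cdot,t)$, and since $\psi\in L^1(\R^d)$, $\Phi(\cdot,t)$ is integrable uniformly in $t\in[0,\tilde T]$.

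Finally, $\Phi\in C([0,\tilde T];L^1(\R^d))$ will follow by dominated convergence: $\Phi$ is continuous on $Q_{\tilde T}$ by viscosity theory, and $A\,\e^{K\tilde T}\psi$ supplies a time-independent integrable majorant. The main obstacle will be the pointwise estimate on $\mathcal{L}^{\mu^*}[\psi]$, which requires careful Taylor cancellation for $|z|\le 1$ and trading exponential growth against exponential decay for $|z|>1$; the matching of the exponent $M$ in $\psi$ with the constant $M$ in \eqref{muassumption2} is exactly what converts a merely bounded solution into an $L^1$ one.
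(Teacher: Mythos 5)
Your proposal is correct and follows essentially the same strategy as the paper: existence, uniqueness, non-negativity and boundedness from standard viscosity theory for the Bellman-type rewriting of \eqref{viscositysoln}, and the $L^1$ bound from an exponentially decaying classical (hence viscosity) supersolution built from the exponential moment in \eqref{muassumption2}, followed by comparison and dominated convergence for the $C([0,\tilde T];L^1)$ continuity. The only difference is in the barrier: the paper uses the family of planar exponentials $C\e^{Kt}\e^{k\xi\cdot x}$ with $|\xi|=1$ and $k\le M$, on which $\mathcal{L}^{\mu^*}$ acts exactly as multiplication by a finite constant, and then takes $\xi=-x/|x|$ to recover radial decay, whereas your single radial barrier $A\e^{Kt}\e^{-M\sqrt{1+|x|^2}}$ requires the Taylor cancellation for $|z|\le 1$ and the $1$-Lipschitz bound $\sqrt{1+|x+z|^2}\ge\sqrt{1+|x|^2}-|z|$ for $|z|>1$ — both routes work.
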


We prove this lemma in Section \ref{sectionlocalcon}. Note that
viscosity solutions are the right type of weak solutions for fully
non-linear and degenerate equations like \eqref{viscositysoln}, see e.g. \cite{CrIsLi92,JaKa05}.

\begin{remark}
\begin{enumerate}[(a)]
\item To handle bounded, non-integrable solutions of \eqref{E}, it is
  important that $\Phi$ belongs to $L^1$  -- a non-standard
  result for equation \eqref{viscositysoln}.
\item As for $\tilde K$, we would have liked to take
  $\Phi_0=\delta_0$ (Dirac measure), since this would give us better constants in the results that
follow. We have not been able to do it for two reasons: i) There is
no well-posedness theory for equations like \eqref{viscositysoln} with
measure initial data, and ii) the $L^1$ bound for $\Phi$ is
obtained by comparison with a particular $L^1$ supersolution. Hence, if
we let $\Phi_0$ be an approximate delta function and then took the
limit, these estimates would blow up and the cruicial $L^1$ property
would be lost. 
\item When $\mathfrak{L}$
is self-adjoint (that is, when $\mathfrak{L}=\Delta$ or
$\mathfrak{L}=\mathcal{L}^{\mu}$ with $\mu$ 
symmetric), we may assume that $\Phi(-x,t)=\Phi(x,t)$. Simply take a
symmetric $\Phi_0$ and the solution of
\eqref{viscositysoln} has this property. 
\label{Phi_deltasymm}
\end{enumerate}
\end{remark}

Before the main theorems are given, we revisit some of the known
results in special cases.   

\begin{theorem}
Assume \eqref{fassumption} holds, and $\varphi=0$. Let $u$ and $v$ be
entropy sub- and supersolutions of \eqref{E} with initial data $u_0,
v_0\in L^\infty(\R^d)$ and measurable source terms $g,h$ satisfying $\int_0^T\|g(\cdot,t)\|_{L^\infty(\mathbb{R}^d)}+\|h(\cdot,t)\|_{L^\infty(\mathbb{R}^d)}\dd t<\infty$. Then for all $t\in(0,T)$,
$M>0$ and $x_0\in\mathbb{R}^d$ 
\begin{equation*}
\begin{split}
\int_{B(x_0, \, M)}(u(x,t)-v(x,t))^+\dd x\leq \int_{B(x_0, \,M+L_ft)}(u_0(x)-v_0(x))^+\dd x\\
\qquad+\int_0^t\int_{B(x_0,\,M+L_f(t-s))}(g(x,s)-h(x,s))^+\dd
x \dd s, 
\end{split}
\end{equation*}
where $L_f$ is the Lipschitz constant of $f$.
\label{localDafcontraction}
\end{theorem}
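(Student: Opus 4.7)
Since $\varphi\equiv 0$, equation \eqref{E} reduces to a scalar conservation law (with source). The classical strategy is to combine the Kato inequality for the pair $(u,v)$ with a test function tailored to the backward cone $|x-x_0|\le M+L_f(t_1-t)$, so that the hyperbolic flux term is absorbed by the time derivative term. I would therefore carry out three steps.

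\textbf{Step 1 (Kato inequality).} Doubling the variables in the Kru\v{z}kov spirit (using the entropy subsolution inequality for $u$ in \eqref{defentsubsolnlap}/\eqref{defentsubsoln} and the entropy supersolution inequality for $v$ in \eqref{defentsupersolnlap}/\eqref{defentsupersoln}), I obtain, in the special case $\varphi\equiv 0$ covered here, that for every $0\le\phi\in C_c^{\infty}(Q_T)$
\begin{equation*}
\iint_{Q_T}(u-v)^+\,\dell_t\phi\,\dd x\dd t+\iint_{Q_T}\sgn(u-v)^+\big[f(u)-f(v)\big]\cdot D\phi\,\dd x\dd t+\iint_{Q_T}(g-h)^+\phi\,\dd x\dd t\ge 0,
\end{equation*}
where I used $\sgn(u-v)^+(g-h)\le (g-h)^+$. (This is the precise analogue, in the purely hyperbolic case, of the Kato inequality derived in Section~4 of the paper and can be referenced directly from there.)

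\textbf{Step 2 (Cone-shaped test function and cancellation).} Fix $t_1\in(0,T)$ and take $\phi(x,t):=\psi(x,t)\,\xi(t)$, where
\begin{equation*}
\psi(x,t):=\chi_{\eta}\!\big(|x-x_0|-M-L_f(t_1-t)\big),
\end{equation*}
with $\chi_\eta\in C^{\infty}(\R)$ non-increasing, $0\le\chi_\eta\le 1$, $\chi_\eta\equiv 1$ on $(-\infty,-\eta]$ and $\chi_\eta\equiv 0$ on $[0,\infty)$, and $\xi=\xi_\veps\in C_c^{\infty}((0,T))$ a standard approximation of $\mathbf{1}_{[0,t_1]}$. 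Writing $r(x,t)=|x-x_0|-M-L_f(t_1-t)$, one gets $\dell_t\psi=L_f\chi_\eta'(r)\le 0$, $D\psi=\chi_\eta'(r)\tfrac{x-x_0}{|x-x_0|}$, hence $|D\psi|=-\chi_\eta'(r)$ and $\dell_t\psi=-L_f|D\psi|$ (the non-smoothness of $|\cdot|$ at $x_0$ is harmless since $\chi_\eta'$ vanishes on a neighbourhood of $r=-\eta$, and for $t\in[0,t_1]$ the zero set of $r$ sits at $|x-x_0|=M+L_f(t_1-t)>0$). Using the Lipschitz bound $|f(u)-f(v)|\le L_f|u-v|$ then yields the pointwise cancellation
\begin{equation*}
(u-v)^+\dell_t\psi+\sgn(u-v)^+\big[f(u)-f(v)\big]\cdot D\psi\le -L_f(u-v)^+|D\psi|+L_f(u-v)^+|D\psi|= 0.
\end{equation*}

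\textbf{Step 3 (Passing to the limits).} Plugging $\phi=\psi\xi_\veps$ into the Kato inequality from Step~1 and using the cancellation of Step~2 (since $\xi_\veps\ge 0$), the flux and $\dell_t\psi$ contributions drop out and I am left with
\begin{equation*}
\iint_{Q_T}(u-v)^+\psi\,\xi_\veps'(t)\,\dd x\dd t+\iint_{Q_T}(g-h)^+\psi\,\xi_\veps\,\dd x\dd t\ge 0.
\end{equation*}
Letting $\xi_\veps\to\mathbf{1}_{[0,t_1]}$ (so $\xi_\veps'$ mimics $\delta_0-\delta_{t_1}$) and invoking the $C([0,T];L^1_{\textup{loc}})$ regularity of $u,v$ together with $u(\cdot,0)\le u_0$ and $v(\cdot,0)\ge v_0$, I obtain
\begin{equation*}
\int_{\R^d}(u(x,t_1)-v(x,t_1))^+\psi(x,t_1)\,\dd x\le \int_{\R^d}(u_0-v_0)^+\psi(x,0)\,\dd x+\int_0^{t_1}\!\!\int_{\R^d}(g-h)^+\psi\,\dd x\dd s.
\end{equation*}
Finally, $\eta\to 0^+$ gives $\psi(x,s)\to\mathbf{1}_{B(x_0,M+L_f(t_1-s))}(x)$ pointwise a.e., and dominated convergence delivers the theorem. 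The only mildly delicate point is the limit $\xi_\veps\to\mathbf{1}_{[0,t_1]}$: passing the time derivative onto $(u-v)^+$ is justified by the $L^1_{\textup{loc}}$ continuity in time, and the inequality $u(\cdot,0)-v(\cdot,0)\le u_0-v_0$ a.e. (valid because $u$ is a subsolution and $v$ a supersolution) is then what replaces $(u(\cdot,0)-v(\cdot,0))^+$ by the desired $(u_0-v_0)^+$.
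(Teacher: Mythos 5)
Your proof is correct and is essentially the classical Kru\v{z}kov--Dafermos argument that the paper itself cites for this theorem (Dafermos, p.~149) rather than reproving: a Kato inequality followed by a cone-shaped test function $\chi_\eta(|x-x_0|-M-L_f(t_1-t))$ whose time derivative cancels the flux term via $|f(u)-f(v)|\le L_f|u-v|$. It is also precisely the specialization to $\varphi=0$ of the paper's own machinery, namely Proposition \ref{dualequation} and Corollary \ref{manipulateddualeq}(a) with $L_\varphi=0$ combined with the subsolution $\gamma_{\tilde\delta}$ of \eqref{subsolngamma} (the paper regularizes $|x-x_0|$ by $\sqrt{\tilde\delta^2+|x-x_0|^2}$ where you instead take $\eta<M$ so that $\chi_\eta'$ vanishes near $x_0$; both devices work).
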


This is the classical local $L^1$ contraction result for scalar
conservation laws, see e.g. Dafermos \cite[p. 149]{Daf10} for a
proof. The hyperbolic finite speed of propagation property is encoded
in the result.

In the linear non-local diffusion case, Alibaud \cite{Ali07} obtained
the inequality
\begin{align}
\label{L1contr2}
\int_{B(x_0, \, M)}(u(x,t)-v(x,t))^+\dd x\leq \int_{B(x_0, \,M+L_ft)}\big[\tilde{K}(\cdot,t)\ast(u_0-v_0)^+\big](x)\dd x\\
\quad+\int_0^t\int_{B(x_0,\,M+L_f(t-s))}\big[\tilde{K}(\cdot,t-s)\ast(g(\cdot,s)-h(\cdot,s))^+\big](x)\dd x \dd s,\nonumber
\end{align}
where $L_f$ is the Lipschitz constant of $f$. We state the result
along with a new result for the local case.

\begin{theorem}\label{localKcontraction}
Assume \eqref{fassumption}, $\varphi(u)=u$, and  $\tilde{K}$ is defined by
\eqref{heatkernel}. Let $t\in(0,T)$, $M>0$, 
$x_0\in\mathbb{R}^d$, and $u$
and $v$ be entropy sub- and supersolutions of \eqref{E} with initial data
$u_0, v_0\in L^\infty(\R^d)$ and measurable source terms $g,h$
satisfying $\int_0^T\|g(\cdot,t)\|_{L^\infty(\mathbb{R}^d)}+\|h(\cdot,t)\|_{L^\infty(\mathbb{R}^d)}\dd t<\infty$. 
\begin{enumerate}[(a)]
\item  If $\mathfrak{L}=-(-\Delta)^{\frac{\alpha}{2}}$ for $\alpha\in(0,2)$, then the $L^1$ contraction estimate
\eqref{L1contr2} holds. 
\medskip
\item If $\mathfrak{L}=\Delta$ ($\alp=2$), then the $L^1$ contraction estimate
\eqref{L1contr2} holds. 
\end{enumerate}
\end{theorem}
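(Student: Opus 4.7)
The plan is to combine a Kato-type inequality (obtained by the doubling-of-variables technique carried out in Section~4) with a test function engineered to be a subsolution of an appropriate dual equation. Since $\varphi(u)=u$, the Kato inequality reduces, in $\mathcal{D}'(Q_T)$, to
\begin{equation*}
\dell_s(u-v)^+ + \diver\bigl[\sgn(u-v)^+(f(u)-f(v))\bigr] - \mathfrak{L}(u-v)^+ \leq (g-h)^+,
\end{equation*}
so that a non-negative test function $\phi(x,s)$ on $\R^d\times(0,t)$ will yield the desired estimate provided it satisfies the dual inequality $\dell_s\phi + L_f|D\phi| + \mathfrak{L}^*\phi \leq 0$, together with $\phi(\cdot,t)\geq \mathbf{1}_{B(x_0,M)}$ and an $s=0$ bound of Duhamel form.

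\textbf{Construction of the test function.} The Hamilton--Jacobi part $\dell_s\eta + L_f|D\eta|=0$ admits an explicit subsolution encoding the hyperbolic finite speed of propagation. Fix a smooth non-increasing cutoff $\Theta_\eps:\R\to[0,1]$ with $\Theta_\eps\equiv 1$ on $(-\infty,0]$ and $\Theta_\eps\equiv 0$ on $[\eps,\infty)$, and set
\begin{equation*}
\eta_\eps(x,s):=\Theta_\eps\bigl(|x-x_0|-M-L_f(t-s)\bigr),
\end{equation*}
which (because $M>0$ makes $\eta_\eps$ locally constant near $x_0$) is $C^\infty$ and satisfies $\dell_s\eta_\eps + L_f|D\eta_\eps|=0$ pointwise. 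To incorporate diffusion, convolve against the heat kernel from Lemma~\ref{propofheatkernel}: for $\delta>0$, define
\begin{equation*}
\phi_{\eps,\delta}(x,s):=\bigl[\tilde K(\cdot,t-s+\delta)\ast\eta_\eps(\cdot,s)\bigr](x).
\end{equation*}
Since both $\mathfrak{L}=\Delta$ and $\mathfrak{L}=-(-\Delta)^{\alpha/2}$ are self-adjoint, $\dell_\tau\tilde K=\mathfrak{L}^*\tilde K$, hence $\dell_s\phi_{\eps,\delta}+\mathfrak{L}^*\phi_{\eps,\delta}=\tilde K\ast\dell_s\eta_\eps$. Combined with $|D\phi_{\eps,\delta}|\leq \tilde K\ast|D\eta_\eps|$ (which uses $\tilde K\geq 0$), this gives $\dell_s\phi_{\eps,\delta}+L_f|D\phi_{\eps,\delta}|+\mathfrak{L}^*\phi_{\eps,\delta}\leq \tilde K\ast(\dell_s\eta_\eps+L_f|D\eta_\eps|)=0$.

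\textbf{Closing the loop.} Testing the Kato inequality against $\phi_{\eps,\delta}$ (with a standard time mollification $\zeta\to\mathbf{1}_{[0,t]}$) and using the Lipschitz bound $|\sgn(u-v)^+(f(u)-f(v))\cdot D\phi_{\eps,\delta}|\leq L_f(u-v)^+|D\phi_{\eps,\delta}|$, the middle integrand becomes non-positive and we are left with
\begin{equation*}
\int(u-v)^+(x,t)\phi_{\eps,\delta}(x,t)\dd x \leq \int(u_0-v_0)^+(x)\phi_{\eps,\delta}(x,0)\dd x + \iint_{(0,t)}(g-h)^+\phi_{\eps,\delta}\dd x\dd s.
\end{equation*}
Since $\eta_\eps(y,s)\leq \mathbf{1}_{B(x_0,M+L_f(t-s)+\eps)}(y)$, Fubini together with the symmetry $\tilde K(x,\tau)=\tilde K(-x,\tau)$ identifies the right-hand side with the Duhamel expression in the theorem, with radii inflated by $\eps$ and heat time shifted by $\delta$. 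Passing $\delta\to 0^+$ (so that $\phi_{\eps,\delta}(\cdot,t)\to\eta_\eps(\cdot,t)\geq\mathbf{1}_{B(x_0,M)}$, while the right-hand side converges by $L^1$-continuity of $\tau\mapsto \tilde K(\cdot,\tau)\ast(u_0-v_0)^+$), and then $\eps\to 0$ by dominated convergence, recovers the claimed inequality.

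\textbf{Main obstacle.} The delicate point is engineering $\phi_{\eps,\delta}$ simultaneously as (i) a smooth subsolution of the Hamilton--Jacobi--diffusion dual equation, (ii) localised at $s=0$ so as to match the Duhamel expression after using self-adjointness of $\tilde K\ast$, and (iii) bounded below at $s=t$ by $\mathbf{1}_{B(x_0,M)}$. The smoothness and decay of $\tilde K(\cdot,\tau)$ for $\tau>0$ is essential to make $\mathfrak{L}^*\phi_{\eps,\delta}$ and the commutation of $\mathfrak{L}^*$ with convolution rigorous in the non-local case; this obstacle will become substantially harder in the general non-linear $\varphi$ case of Section~5, where no fundamental solution is available and one must appeal to viscosity solutions of \eqref{viscositysoln}.
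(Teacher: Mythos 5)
Your proposal is correct and follows essentially the same route as the paper: a Kato inequality from doubling of variables, a test function built by convolving a Hamilton--Jacobi subsolution of $\dell_s\eta+L_f|D\eta|=0$ with the (self-adjoint) kernel $\tilde K$ run backward in time, and then the limits $\delta,\eps\to0^+$. The only cosmetic differences are that you desingularize at $s=t$ by shifting the heat time by $\delta$ rather than letting $t\to\tau^-$ at the end, and you smooth the radial cutoff via local constancy near $x_0$ rather than the paper's $\sqrt{\tilde\delta^2+|x-x_0|^2}$ regularization.
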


The result has the form of a
partial Duhamel formula involving the fundamental solution of
the parabolic part of the equation (which is linear here).
The proof of (a) can be found in \cite{Ali07} when $g=0$, and
the extension to general $g$ is easy. 
Part (b) seems to be new, but essentially it follows from the
argument of \cite{Ali07} and Proposition \ref{dualequation}. The proof
is given in Section \ref{sectionlocalcon}.

Now, we give our main result which is an $L^1$ contraction estimate of
the form 
\begin{align}\label{L1contr3}
\int_{B(x_0, \, M)}(u(x,t)-v(x,t))^+\dd x\leq\int_{B(x_0,M+1+L_ft)}\big[\Phi(-\cdot,L_\varphi t)\ast(u_0-v_0)^+\big](x)\dd x\\
\quad+\int_0^t\int_{B(x_0,M+1+L_f(t-s))}\big[\Phi(-\cdot,L_\varphi(t-s))\ast(g(\cdot,s)-h(\cdot,s))^+\big](x)\dd x \dd s,\nonumber
\end{align}
where $L_f$ and $L_\varphi$ are the Lipschitz constants of $f$ and
$\varphi$ respectively.

\begin{theorem}\label{localcontractions}
Assume \eqref{fassumption}, \eqref{Aassumption} hold, and $\Phi$ is
given by Lemma \ref{Phiprop}. Let $t\in(0,T)$,
$M>0$, $x_0\in\mathbb{R}^d$, and $u$ and $v$ be entropy sub- and
supersolutions of 
\eqref{E}  with initial data
$u_0, v_0\in L^\infty(\R^d)$ and measurable source terms $g,h$
satisfying $\int_0^T\|g(\cdot,t)\|_{L^\infty(\mathbb{R}^d)}+\|h(\cdot,t)\|_{L^\infty(\mathbb{R}^d)}\dd t<\infty$.  
\begin{enumerate}[(a)]
\item  If $\mathfrak{L}=\mathcal L^\mu$ and \eqref{muassumption2}
  holds, then the $L^1$ contraction estimate 
\eqref{L1contr3} holds. 
\medskip
\item If $\mathfrak{L}=\Delta$, then the $L^1$ contraction estimate
\eqref{L1contr3} holds. 
\end{enumerate}
\end{theorem}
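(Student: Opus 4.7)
The plan is to combine a Kato-type inequality for entropy sub/super-solutions (which will be derived in Section 4) with a carefully tailored test function built from the dual viscosity solution $\Phi$. Using $|f(u)-f(v)|\le L_f|u-v|$, the monotonicity of $\varphi$ (giving $0\le(\varphi(u)-\varphi(v))^+\le L_\varphi(u-v)^+$), and $\sgn(u-v)^+(g-h)\le(g-h)^+$, I expect the Kato inequality, after integration by parts in time, to take the form
\begin{equation*}
\int \psi(x,\tau)(u-v)^+(x,\tau)\dd x\le\int\psi(x,0)(u_0-v_0)^+(x)\dd x+\iint_0^\tau (u-v)^+\mathcal A[\psi]\dd x\dd t+\iint_0^\tau (g-h)^+\psi\dd x\dd t,
\end{equation*}
for sufficiently regular $\psi\ge 0$, where $\mathcal A[\psi]:=\partial_t\psi+L_f|D\psi|+L_\varphi(\mathfrak L^*\psi)^+$. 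The whole problem then reduces to producing a $\psi\ge 0$ with $\mathcal A[\psi]\le 0$ such that $\psi(\cdot,\tau)$ dominates $\mathbf{1}_{B(x_0,M)}$ while $\psi(\cdot,0)$ produces exactly the $\Phi$-convolution on the right of \eqref{L1contr3}.

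Normalize $\Phi_0\ge 0$ in Lemma \ref{Phiprop} so that $\supp\Phi_0\subseteq B(0,1)$ and $\int\Phi_0=1$ (this accounts for the ``$+1$'' in the theorem). Put $\tilde\Phi(x,s):=\Phi(x,L_\varphi s)$, which formally satisfies $\partial_s\tilde\Phi=L_\varphi(\mathfrak L^*\tilde\Phi)^+$, and pick a smooth cutoff $\chi(y,t):=\psi_0\bigl(|y-x_0|-(M+1)-L_f(\tau-t)\bigr)$ with $\psi_0\ge 0$ smooth, non-increasing, identically $1$ on $(-\infty,0]$ and $0$ on $[\veps,\infty)$. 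A direct computation gives $\partial_t\chi+L_f|D\chi|\le 0$, $\chi(\cdot,\tau)\ge\mathbf{1}_{B(x_0,M+1)}$, and $\supp\chi(\cdot,0)\subseteq B(x_0,M+1+L_f\tau+\veps)$. Define
\begin{equation*}
\psi(x,t):=\int_{\R^d}\chi(y,t)\,\tilde\Phi(x-y,\tau-t)\,\dd y.
\end{equation*}
Then $\psi(\cdot,\tau)=\chi(\cdot,\tau)\ast\Phi_0\ge\mathbf{1}_{B(x_0,M)}$, and a Fubini argument identifies $\int\psi(x,0)(u_0-v_0)^+\dd x$ and $\iint_0^\tau(g-h)^+\psi\dd x\dd t$ with the two ``$\Phi(-\cdot,L_\varphi\cdot)\ast\cdot$'' integrals on the right-hand side of \eqref{L1contr3}, up to letting $\veps\to 0$.

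The cancellation producing $\mathcal A[\psi]\le 0$ is the crux. Formally differentiating under the integral and using the equation for $\tilde\Phi$,
\begin{equation*}
\partial_t\psi=\int\partial_t\chi(y,t)\,\tilde\Phi(x-y,\tau-t)\,\dd y-L_\varphi\int\chi(y,t)\,(\mathfrak L^*\tilde\Phi)^+(x-y,\tau-t)\,\dd y.
\end{equation*}
Since $\mathfrak L^*$ commutes with $x$-translations, $\mathfrak L^*\psi=\int\chi\,(\mathfrak L^*\tilde\Phi)(x-y,\tau-t)\dd y$, and the elementary inequality $(a-b)^+\le a$ for $a,b\ge 0$ (applied with $a,b\ge 0$ the integrals against $\chi\ge 0$ of $(\mathfrak L^*\tilde\Phi)^\pm$) yields
\begin{equation*}
(\mathfrak L^*\psi)^+\le\int\chi(y,t)\,(\mathfrak L^*\tilde\Phi)^+(x-y,\tau-t)\,\dd y.
\end{equation*}
An integration by parts in $y$ gives $|D\psi|\le\int|D\chi|\,\tilde\Phi\dd y$, so the two $L_\varphi$-terms cancel and $\mathcal A[\psi]\le\int(\partial_t\chi+L_f|D\chi|)\tilde\Phi\dd y\le 0$. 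Substituting $\psi$ into the Kato inequality kills the bulk term, and \eqref{L1contr3} follows after sending $\veps\to 0$.

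The main obstacle is that $\Phi$ is merely a viscosity solution of the fully non-linear degenerate equation \eqref{viscositysoln}, so the pointwise manipulations above—differentiation under the integral, $\mathfrak L^*$ applied inside the integral, integration by parts on $\tilde\Phi$—are purely formal. The rigorous proof must replace $\Phi$ by a smooth approximation $\Phi^\delta$ (by mollification in $x$, by sup/inf-convolution in the viscosity framework, or by a vanishing-viscosity perturbation of \eqref{viscositysoln}), perform the computation at the $\delta$-level where everything is classical, and then pass to the limit using the uniform $L^\infty\cap L^1$ bounds of Lemma \ref{Phiprop}; this is precisely where assumption \eqref{muassumption2} enters, since without it the $L^1$-decay of $\tilde\Phi$ could be lost. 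Two subsidiary issues remain: extending the Kato inequality (a priori valid only for $\phi\in C_c^\infty$) to the non-compactly-supported $\psi$ via an additional spatial cutoff and dominated convergence using the $L^1$-decay of $\tilde\Phi$; and, in part (a), handling the splitting $\mathcal L^{\mu^*}=\mathcal L_r^{\mu^*}+b^{\mu^*,r}\cdot D+\mathcal L^{\mu,r}$ from Definition \ref{defentsoln} by letting $r\to 0$ after mollification. The local case (b) is then the simpler companion because $\mathfrak L^*=\mathfrak L=\Delta$ acts pointwise.
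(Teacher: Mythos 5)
Your proposal is correct and follows essentially the same route as the paper: reduce via the Kato inequality to finding $\psi\ge0$ with $\partial_t\psi+L_f|D\psi|+L_\varphi(\mathfrak L^*\psi)^+\le0$, build $\psi$ as the convolution of a transport-subsolution cutoff with the time-reversed, rescaled dual solution $\Phi$, get the cancellation from $(\phi\ast f)^+\le\phi\ast f^+$, and make everything classical by mollifying $\Phi$ (which preserves supersolutions precisely because $r\mapsto r^+$ is convex -- the paper's Lemma \ref{convolutionwithPhilemma}). The limit passages you flag are carried out in the paper in the order $\tilde\delta\to0^+$, $\delta\to0^+$, $\veps\to0^+$ using Fatou, dominated convergence, and $L^1$-continuity of translation, with the extension to non-compactly-supported test functions handled by density in Corollary \ref{manipulateddualeq}~c).
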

The proof is given in Section \ref{sectionlocalcon}. These results,
the $L^1$ contractions \eqref{L1contr2} and \eqref{L1contr3}, encode
both the finite speed of propagation of the hyperbolic term and the
infinite speed of propagation of the parabolic term. 
As far as we know, this is the first time such a partial Duhamel type
$L^1$ contraction result has been given for non-linear diffusions.

\begin{remark}\begin{enumerate}[(a)]
\item By Fubini and a change of variables\footnote{E.g.\begin{align*}
&\int_{B(x_0,M+1+L_ft)}\int_{\R^d}\Phi(-y,L_\varphi
t)(u_0-v_0)^+(x-y)\dd y\dd x\\
& =\int_{\R^d}\Phi(-y,L_\varphi
t)\int_{B(x_0,M+1+L_ft)}(u_0-v_0)^+(x-y)\dd x\dd y \\
&=\int_{\R^d}\Phi(-y,L_\varphi
t)\int_{B(x_0-y,M+1+L_ft)}(u_0-v_0)^+(z)\dd z\dd y 
\end{align*}}, the $L^1$ contraction
  \eqref{L1contr3} is equivalent to an inequality involving
  convolutions of local $L^1$ norms and $\Phi$: 
\begin{align}
\nonumber
\|(u(\cdot,t)-v(\cdot,t))^+\|_{L^1(B(x_0,M))}\leq
\int_{\R^d}\Phi(-y,L_\varphi
t)\|(u_0-v_0)^+\|_{L^1(B(x_0-y,M+1+L_ft))}\dd y\!\!\!\!\!\!\!\\
\label{L1contr3b}\quad+\int_0^t\int_{\R^d}\Phi(-y,L_\varphi(t-s))\|(g(\cdot,s)-h(\cdot,s))^+\|_{L^1(B(x_0-y,M+1+L_f(t-s)))}\dd
y\dd s.\!\!\!\!\!\!\!
\end{align}
\item Theorem \ref{localcontractions} gives a stronger
  $L^1$ contraction estimate than previous results \cite{MaTo03,AnMa10,CiJa11},
  see the discussion in the introduction and the next section.    
\item Theorem \ref{localcontractions}
  (a) is the first $L^1$ contraction result for bounded solutions of
  \eqref{E} with non-local $\mathfrak L$. 
\item Theorem \ref{localcontractions} (a) holds under assumption
  \eqref{muassumption2} which is discussed in 
  the introduction. We do not know if this assumption can be
  relaxed. We use it to prove that 
  $\Phi(\cdot,t)$ belongs to $L^1$, a result which is needed for
  \eqref{L1contr3} to be well-defined for merely bounded initial data and source
  term.  
\item The $+1$-factor in $B(x_0, M+1+L_ft)$ in Theorem
  \ref{localcontractions} depends on the 
choice of $\Phi$, and comes from the fact that $\Phi(x,t)$
is not an approximate unit as $t\to 0^+$. In fact, it will have
increasing mass (or $L^1$ norm) in time.    
\end{enumerate}
\end{remark}

\section{Consequences}
\label{sectionconsequences}

Using Theorem \ref{localcontractions}, we now derive maximum and comparison principles, new a priori estimates,
and new existence and uniqueness results for \eqref{E}. The latter results are new only in the non-local case.

\begin{corollary}
Assume \eqref{fassumption} and \eqref{Aassumption} hold,
\eqref{muassumption2} holds when
$\mathfrak{L}=\mathcal{L}^{\mu}$, $u_0, v_0\in~L^\infty(\R^d)$, and
measurable $g,h$ satisfying 
$$\int_0^T\|g(\cdot,t)\|_{L^\infty(\mathbb{R}^d)}+\|h(\cdot,t)\|_{L^\infty(\mathbb{R}^d)}\dd
t<\infty.$$
Let $M>0$, $x_0\in\mathbb{R}^d$ and $L_f$ and $L_\varphi$ be the Lipschitz
constants of $f$ and $\varphi$ respectively. 
\begin{enumerate}[(a)]
\item \textup{($L^1$ contraction).} Let $u$ and $v$ be entropy
  solutions of \eqref{E} with initial data $u_0, v_0$ and source terms
  $g, h$ respectively. Then for all $t\in(0,T)$,
\begin{equation*}
\begin{split}
\|u(\cdot,t)-v(\cdot,t)\|_{L^1(B(x_0,M))}\leq \|\Phi(-\cdot,L_\varphi t)\ast|u_0-v_0|\|_{L^1(B(x_0,M+1+L_ft))}\\
\quad+\int_0^t\|\Phi(-\cdot,L_\varphi(t-s))\ast|g(\cdot,s)-h(\cdot,s)|\|_{L^1(B(x_0,M+1+L_f(t-s)))}\dd s.
\end{split}
\end{equation*}
\item \textup{($L^{1}$ bound).} Let $u$ be an entropy solution of
  \eqref{E}. Then for all $t\in(0,T)$, 
\begin{equation*}
\begin{split}
\|u(\cdot,t)\|_{L^{1}(B(x_0,M))}\leq\|\Phi(-\cdot,L_\varphi t)\ast |u_0|\|_{L^{1}(B(x_0,M+1+L_ft))}\\
\quad+\int_0^t\|\Phi(-\cdot,L_\varphi(t-s))\ast|g(\cdot,s)|\|_{L^1(B(x_0,M+1+L_f(t-s)))}\dd s.
\end{split}
\end{equation*}
\item \textup{(Comparison principle).} Let $u$ and $v$ be entropy sub-
  and supersolutions of \eqref{E} with initial data $u_0, v_0$ and
  source terms $g,h$ respectively. If $u_0\leq v_0$ a.e. on
  $\R^d$ and $g\leq h$ a.e. in $Q_T$, then 
\begin{equation*}
u(x,t)\leq v(x,t)\qquad\text{a.e. in $Q_T$}.
\end{equation*}

\item \textup{(Maximum principle).} Let $u$ be an entropy solution of \eqref{E}. Then
\begin{equation*}
\inf_{x\in\R^d}u_0(x)+\int_0^t\inf_{x\in\R^d}g(x,s)\dd s \leq
u(x,t)\leq\sup_{x\in\R^d}u_0(x)+\int_0^t\sup_{x\in\R^d}g(x,s)\dd s 
\end{equation*}
a.e. in $Q_T$.
\item \textup{($BV$ bound).} Let $u$ be an entropy solution of
  \eqref{E} and assume $u_0\in BV(\mathbb{R}^d)$, $g$ is measurable,
  and $\int_0^T|g(\cdot, t)|_{BV(\mathbb{R}^d)}\dd t<\infty$. Then for all \\$t\in(0,T)$, $x_0\in\R^d$, and $M>0$,
\begin{equation*}
\begin{split}
&|u(\cdot,t)|_{BV(B(x_0,M))}\\
&\leq\sup_{h\neq0}\frac{\|\Phi(-\cdot,L_\varphi t)\ast|u_0(\cdot+h)-u_0|\|_{L^1(B(x_0,M+1+L_ft))}}{|h|}\\
&\quad+\sup_{h\neq0}\frac{\int_0^t\|\Phi(-\cdot,L_\varphi(t-s))\ast|g(\cdot+h,s)-g(\cdot,s)|\|_{L^1(B(x_0,M+1+L_f(t-s)))}\dd s}{|h|}\\
\end{split}
\end{equation*}
\end{enumerate}
\label{collcorolocal}
\end{corollary}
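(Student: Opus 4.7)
The plan is to derive all five statements as consequences of Theorem \ref{localcontractions} by choosing the entropy sub-/supersolutions $u,v$ and the source terms $g,h$ appropriately, then exploiting translation invariance and the fact that the constants and translates of solutions are again solutions.

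For part (a), I would apply Theorem \ref{localcontractions} twice: once to the pair $(u,v)$ with data $(u_0,v_0,g,h)$, obtaining a bound on $\int_{B(x_0,M)}(u-v)^+\,\dd x$; and once with the roles interchanged to get a bound on $\int_{B(x_0,M)}(v-u)^+\,\dd x$. Adding the two estimates and using $|a-b|=(a-b)^++(b-a)^+$ on both sides (the RHS convolutions are linear in the data after identifying $(u_0-v_0)^++(v_0-u_0)^+=|u_0-v_0|$, and similarly for $g-h$) gives (a). Part (b) is then immediate from (a) applied with $v\equiv 0$ and $h\equiv 0$; indeed, by Remark \ref{assumptionremark} we may assume $f(0)=\varphi(0)=0$, so the zero function is a classical, hence entropy, solution of \eqref{E} with zero initial data and zero source.

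For the comparison principle (c), I would apply Theorem \ref{localcontractions} directly: under the hypotheses $u_0\leq v_0$ and $g\leq h$, the integrands $(u_0-v_0)^+$ and $(g-h)^+$ vanish a.e., so the right-hand side of \eqref{L1contr3} is zero for every $x_0\in\R^d$ and $M>0$. Hence $(u-v)^+=0$ a.e.\ in $Q_T$. For the maximum principle (d), I would construct barriers that are constant in $x$: set
\begin{equation*}
\bar v(t):=\esssup_{x\in\R^d}u_0(x)+\int_0^t\esssup_{x\in\R^d}g(x,s)\,\dd s,\qquad \bar g(t):=\esssup_{x\in\R^d}g(x,t).
\end{equation*}
Since $\bar v$ is spatially constant, $\diver f(\bar v)=0$, $\mathfrak{L}\varphi(\bar v)=0$, and $\partial_t\bar v=\bar g$, so $\bar v$ is a classical (hence entropy) supersolution of \eqref{E} with data $\bar v(0)\geq u_0$ and source $\bar g\geq g$. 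Applying (c) yields $u\leq\bar v$; the lower bound is symmetric.

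Part (e) uses translation invariance. Because $f$, $\varphi$, and $\mathfrak{L}$ do not depend explicitly on $x$, for every $h\in\R^d$ the shifted function $v(x,t):=u(x+h,t)$ is again an entropy solution of \eqref{E} with initial datum $u_0(\cdot+h)$ and source $g(\cdot+h,t)$. Applying (a) to the pair $(u,v)$, then dividing by $|h|$ and taking the supremum over $h\neq 0$ gives precisely the claimed semi-norm bound, after noting that $(u(\cdot+h,t)-u(\cdot,t))/|h|$ controls the BV semi-norm of $u(\cdot,t)$ via the characterization used in the paper (cited from \cite{HoRi07,AlCiJa12}). The main obstacle here, and indeed the only nontrivial point in the whole corollary, is verifying that these auxiliary functions (the zero function, the spatial constants, and the translates) genuinely qualify as entropy sub-/supersolutions in the sense of Definitions \ref{defentsolnlap}--\ref{defentsoln}; once that is in place, all five items reduce to bookkeeping on top of Theorem \ref{localcontractions}.
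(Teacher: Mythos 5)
Your proposal is correct and follows essentially the same route as the paper: part (a) by applying Theorem \ref{localcontractions} to both $(u-v)^+$ and $(u-v)^-$ and adding, (b) as the special case $v=v_0=h=0$, (c) by noting the right-hand side of \eqref{L1contr3} vanishes, (d) via the spatially constant entropy supersolution $\sup u_0+\int_0^t\sup g$, and (e) via translation invariance and part (a). The only difference is that you flag explicitly the need to check that the constants and translates are genuine entropy sub-/supersolutions, a point the paper leaves implicit; this is a harmless (indeed welcome) addition.
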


\begin{remark}
The $L^1$ and $BV$ bounds are new even in the local
case. 

In a similar way as in \eqref{L1contr3b}, the bounds
in a), b), e) can be expressed as convolutions of local
norms. E.g. when $g=h=0$,
\begin{align*}
\|u(\cdot,t)\|_{L^1(B(x_0,M))} &\leq \int_{\R^d}\Phi(-y,L_\varphi
t) \|u_0\|_{L^1(B(x_0-y,M+1+L_ft))}\dd y\\
|u(\cdot,t)|_{BV(B(x_0,M))}
& \leq \int_{\R^d}\Phi(-y,L_\varphi
t) |u_0|_{BV(B(x_0-y,M+1+L_ft))}\dd y.
\end{align*}
If $|u_0|_{BV(\mathbb{R}^d)}<\infty$, it follows that $|u(\cdot,t)|_{BV(B(x_0,M))}\leq\|\Phi(\cdot,L_\varphi t)\|_{L^1(\mathbb{R}^d)}|u_0|_{BV(\mathbb{R}^d)}$.
\end{remark}
\begin{proof}
a) By  Theorem \ref{localcontractions}, estimate \eqref{L1contr3}
holds. Interchanging the roles of $u,g$ and $v,h$, and using 
$(v-u)^+=(u-v)^-$ etc., we see that \eqref{L1contr3} holds for $(u-v)^-$
as well as for $(u-v)^+$. Hence a) follows.  
\medskip

\noindent b) Follows from a) with $v=v_0=h=0$.
\medskip

\noindent c) By  the contraction estimate \eqref{L1contr3} and the assumptions on the
initial data and source terms, for all $t>0$, $x_0\in\R^d$, and $M>0$,
\begin{equation*}
\begin{split}
&\int_{B(x_0, \, M)}(u(x,t)-v(x,t))^+\dd x\leq 0.
\end{split}
\end{equation*}
Hence $(u-v)^+=0$ and $u\leq v$ a.e. in $Q_T$.
\medskip

\noindent d) Note that
$w(t)=\sup_{x\in\R^d}u_0(x)+\int_0^t\sup_{x\in\R^d}g(x,s)\dd s$ is an entropy
supersolution of \eqref{E}, and then $u\leq w$ a.e. by part c). In a
similar way, the lower bound follows.

\medskip
 
\noindent e) Since \eqref{E} is translation invariant, both $u(x,t)$ and
$u(x+h,t)$ are entropy solutions of \eqref{E} with initial data
$u_0(x)$ and $u_0(x+h)$, and sources $g(x,t)$ and $g(x+h,t)$ respectively. By the definition of $|\cdot|_{BV}$ and part a),
\begin{equation*}
\begin{split}
&|u(\cdot,t)|_{BV(B(x_0,M))}\\
&=\sup_{h\neq0}\frac{\|u(\cdot+h,t)-u(\cdot,t)\|_{L^1(B(x_0,M))}}{|h|}\\
&\leq\sup_{h\neq0}\int_{B(x_0,M+1+L_ft)}\int_{\mathbb{R}^d}\Phi(-(x-y),L_\varphi t)\frac{|u_0(y+h)-u_0(y)|}{|h|}\dd y \dd x\\
&\quad+\sup_{h\neq0}\int_0^t\int_{B(x_0,M+1+L_f(t-s))}\int_{\R^d}\Phi(-(x-y),L_\varphi(t-s))\\
&\qqquad\qqquad\qqquad\qqquad\qqquad\cdot\frac{|g(y+h,s)-g(y,s)|}{|h|}\dd y\dd x\dd s.
\end{split}
\end{equation*}
\end{proof}

\begin{theorem}[Existence and uniqueness]\label{existence}
Assume that \eqref{fassumption}, \eqref{gassumption},
\eqref{Aassumption}, and \eqref{u_0assumption} hold, and
\begin{equation*}
\mathfrak{L}=\Delta \quad\text{or}\quad\mathfrak{L}=\mathcal{L}^{\mu}\text{ and \eqref{muassumption2} holds}.
\end{equation*}
Then there exists a unique entropy solution of the initial value
  problem \eqref{E}.
\end{theorem}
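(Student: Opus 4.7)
Uniqueness is immediate from the new contraction: if $u,v$ are two entropy solutions of \eqref{E} with the same data $(u_0,g)$, then Corollary \ref{collcorolocal}(a) applied with $v_0=u_0$ and $h=g$ makes the right-hand side vanish identically, so $\|u(\cdot,t)-v(\cdot,t)\|_{L^1(B(x_0,M))}=0$ for every $x_0\in\R^d$, $M>0$, $t\in(0,T)$, hence $u=v$ a.e.\ in $Q_T$. For existence, the local case $\mathfrak{L}=\Delta$ is already in the literature (\cite{KaRi03,MaTo03,AnMa10}); the real content lies in the non-local case, which I would attack by approximating the merely bounded Cauchy problem with integrable ones and passing to the limit with the help of the new partial Duhamel estimate.

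\textbf{Approximation and compactness.} I would select sequences $u_0^n\in L^1(\R^d)\cap L^\infty(\R^d)$ and $g^n$ with $\|u_0^n\|_\infty\leq\|u_0\|_\infty$, $\int_0^T\|g^n(\cdot,t)\|_\infty\dd t\leq\int_0^T\|g(\cdot,t)\|_\infty\dd t$, $u_0^n\to u_0$ in $L^1_\textup{loc}(\R^d)$, and $g^n\to g$ locally in $L^1$ (for instance by spatial truncation, $u_0^n=u_0\mathbf{1}_{B(0,R_n)}$ with $R_n\to\infty$, and similarly for $g^n$). For such integrable data, the existence theory developed in \cite{CiJa11} produces entropy solutions $u^n\in L^\infty(Q_T)\cap C([0,T];L^1(\R^d))$ of \eqref{E} with data $(u_0^n,g^n)$. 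Applying Corollary \ref{collcorolocal}(a) to the pair $(u^n,u^m)$ bounds
\begin{equation*}
\|u^n(\cdot,t)-u^m(\cdot,t)\|_{L^1(B(x_0,M))}
\end{equation*}
by a convolution of local $L^1$-norms of $u_0^n-u_0^m$ and $g^n-g^m$ against $\Phi(\cdot,L_\varphi\,\cdot)$. Since $\Phi(\cdot,\tau)\in L^1(\R^d)$ by Lemma \ref{Phiprop}, dominated convergence shows that $\{u^n\}$ is Cauchy in $C([0,T];L^1_\textup{loc}(\R^d))$; its limit $u$ lies in $L^\infty(Q_T)\cap C([0,T];L^1_\textup{loc}(\R^d))$ and satisfies $u(\cdot,0)=u_0$ a.e.

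\textbf{Passage to the limit in the entropy inequalities.} It remains to verify that $u$ satisfies \eqref{defentsubsoln} and \eqref{defentsupersoln} for every admissible $\phi\in C_c^\infty(Q_T)$ and $k\in\R$. Along a further subsequence, $u^n\to u$ a.e. The hyperbolic terms and the term $(\varphi(u^n)-\varphi(k))^\pm(\mathcal{L}_r^{\mu^*}[\phi]+b^{\mu^*,r}\cdot D\phi)$ pass to the limit by dominated convergence, using the Lipschitz continuity of $f,\varphi$, the uniform $L^\infty$ bound, and the smoothness and compact support of $\phi$ and $\mathcal{L}_r^{\mu^*}[\phi]$. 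The delicate piece -- and the main obstacle I foresee -- is the nonlocal term $\sgn(u^n-k)^\pm\mathcal{L}^{\mu,r}[\varphi(u^n)]\phi$, where the discontinuity of $\sgn$ obstructs a direct convergence argument. My plan is: for every $k$ outside the countable set $\{k\in\R:|\{u=k\}|>0\}$ one has $\sgn(u^n-k)^\pm\to\sgn(u-k)^\pm$ a.e.\ on $\supp\phi$; meanwhile $\mathcal{L}^{\mu,r}[\varphi(u^n)]$ is uniformly bounded by $2L_\varphi\|u\|_\infty\mu(\{|z|>r\})$ and converges locally in $L^1$ by Fubini together with the $L^1_\textup{loc}$ convergence of $\varphi(u^n)$. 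Dominated convergence then delivers the entropy inequality for a.e.\ $k$, and continuity in $k$ (the jump of $\sgn(\cdot-k)^\pm$ is multiplied by quantities that vanish at $u=k$, exactly as used in the proof of Lemma \ref{solutioniffsubsuper}) extends it to every $k\in\R$, completing the construction.
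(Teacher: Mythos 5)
Your proposal is correct and follows essentially the same route as the paper: uniqueness directly from the contraction of Corollary \ref{collcorolocal}(a), and existence by approximating the data by $L^1\cap L^\infty$ data, invoking the existence theory of \cite{CiJa11,CiJa14}, showing the approximate solutions are Cauchy in $C([0,T];L^1_{\textup{loc}}(\R^d))$ via the new contraction estimate, and passing to the limit in the entropy inequalities (a step the paper only sketches, and under the simplification $g=0$). One small caveat on your final step: in the term $\sgn(u^n-k)^{\pm}\mathcal{L}^{\mu,r}[\varphi(u^n)]\phi$ the factor multiplying the sign does \emph{not} vanish at $u=k$, so the extension from a.e.\ $k$ to all $k$ should proceed by a one-sided approximation $k_j\downarrow k$ (resp.\ $k_j\uparrow k$) through admissible values, which gives $\sgn(u-k_j)^+\to\sgn(u-k)^+$ pointwise, rather than by the cancellation argument you cite from Lemma \ref{solutioniffsubsuper}.
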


\begin{proof} 
In the local case, this result was proved in \cite[Theorem
3.7]{MaTo03}. In the non-local case, uniqueness is an immediate
consequence of Theorem \ref{localcontractions} with $u_0=v_0$ and
$g=h$, and the existence result follows from existence
results for $L^1\cap L^\infty$ solutions \cite{CiJa11,CiJa14} and the
$L^1$ contraction of Corollary \ref{collcorolocal} a). We do the proof
under the simplifying assumption that $g=0$. It  
is  not hard to extend the proof to the general case. 

Take functions
$u_{0,n}\in L^\infty(\mathbb{R}^d)\cap L^1(\mathbb{R}^d)$ such that 
\begin{equation}
\|u_{0,n}\|_{L^\infty(\R^d)}\leq\|u_0\|_{L^\infty(\R^d)}\text{ and }u_{0,n}\to u_0\text{ in $L_\textup{loc}^1(\R^d)$ and pointwise a.e.} 
\label{approxofinitdata}
\end{equation}
By \cite{CiJa11,CiJa14}, there exist entropy
solutions $u_m, u_n$ of \eqref{E} with initial data
$u_{0,m},u_{0,n}$ respectively. By Corollary \ref{collcorolocal} a) and
the triangle inequality,  
\begin{equation*}
\begin{split}
&\|u_m-u_n\|_{C([0,T];L^1(B(x_0,M)))}\\
&\leq\max_{t\in[0,T]}\|\Phi(-\cdot,L_\varphi t)\ast|u_{0,m}-u_0|\|_{L^1(B(x_0,M+1+L_ft))}\\
&\quad+\max_{t\in[0,T]}\|\Phi(-\cdot,L_\varphi t)\ast|u_{0,n}-u_0|\|_{L^1(B(x_0,M+1+L_ft))}.
\end{split}
\end{equation*}
The right-hand side of the inequality goes to zero by Lebesgue's
dominated convergence theorem and \eqref{approxofinitdata} when
$n,m\to\infty$ (the integrand is dominated by $2\Phi(-y,L_\varphi
t)\|u_0\|_{L^\infty}$). Therefore, the sequence of entropy solutions
$\{u_n\}$ is Cauchy in $C([0,T];L^1(B(x_0,M)))$. 

Since $\R^d$ can be covered by a countable number of such balls, a
diagonal argument produces a function $u$ such that $u_\veps \to
u$ in $C([0,T];L_\textup{loc}^1(\R^d))$. Taking, if necessary, a
further subsequence we may assume $u_n\to u$ a.e., and hence
$\|u\|_{L^\infty}\leq\|u_0\|_{L^\infty}$ since
$\|u_n\|_{L^\infty}\leq\|u_0\|_{L^\infty}$ by Corollary
\ref{collcorolocal} d). We conlude that $u$ is an entropy solution of
\eqref{E} by passing to the limit in the
entropy inequality for $u_n$; cf. Definition \ref{defentsoln} c).
\end{proof}

\section{Auxiliary results}
To establish the $L^1$ contraction estimates, we will need some
auxiliary results that we derive here.

\begin{lemma}
Assume $r>0$ and that \eqref{muassumption1} holds. Let $\phi\in W^{2,1}(\R^d)$, then
\begin{equation*}
\|\mathcal{L}_r^{\mu}[\phi]\|_{L^1(\mathbb{R}^d)}\leq\frac{1}{2}\|D^2\phi\|_{L^1(\mathbb{R}^d, \,\mathbb{R}^{d\times d})}\int_{0<|z|\leq r}|z|^2\dd \mu(z) \quad\text{ for $r<1$},
\end{equation*}
\begin{equation*}
\begin{split}
\|\mathcal{L}^{\mu,r}[\phi]\|_{L^1(\mathbb{R}^d)}\leq&\, 2\|\phi\|_{L^1(\mathbb{R}^d)}\int_{|z|>r}\dd \mu(z) \quad\text{ for $r>1$},
\end{split}
\end{equation*}
and
\begin{equation*}
\|\mathcal{L}^\mu[\phi]\|_{L^1(\R^d)}\leq 2\|\phi\|_{W^{2,1}(\R^d)}\int_{\R^d\setminus\{0\}}\min\{|z|^2,1\}\dd \mu(z).
\end{equation*}
\label{L1normlowerr}
\end{lemma}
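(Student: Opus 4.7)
The plan is to establish each of the three bounds separately, relying on Taylor expansion for the small-jump piece and on translation invariance of Lebesgue measure for the large-jump piece, then to combine them for the full operator by splitting at $|z|=1$. Throughout, the natural setting is $\phi\in C^2_c(\R^d)$; the extension to $\phi\in W^{2,1}(\R^d)$ will follow by density at the end of each estimate.

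For the first bound, I would fix $r<1$, so that on the integration domain $0<|z|\le r$ the indicator $\mathbf{1}_{|z|\le 1}$ equals $1$. The second-order Taylor formula with integral remainder gives, for every $x$ and every such $z$,
\begin{equation*}
\phi(x+z)-\phi(x)-z\cdot D\phi(x)=\int_0^1(1-s)\,z^{T}D^2\phi(x+sz)\,z\,\dd s,
\end{equation*}
so that $|\phi(x+z)-\phi(x)-z\cdot D\phi(x)|\le |z|^2\int_0^1(1-s)\|D^2\phi(x+sz)\|\dd s$. Taking absolute values inside $\mathcal{L}_r^{\mu}[\phi](x)$, integrating in $x$, and applying Fubini together with the translation invariance $\int_{\R^d}\|D^2\phi(x+sz)\|\dd x=\|D^2\phi\|_{L^1}$ yields the claimed factor $\frac{1}{2}$ (from $\int_0^1(1-s)\dd s$) in front of $\|D^2\phi\|_{L^1}\int_{0<|z|\le r}|z|^2\dd\mu(z)$.

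For the second bound with $r>1$, the tail operator has no linear correction, and the triangle inequality gives $|\mathcal{L}^{\mu,r}[\phi](x)|\le \int_{|z|>r}(|\phi(x+z)|+|\phi(x)|)\dd\mu(z)$. Fubini and translation invariance (applied to the $\phi(x+z)$ term) produce the factor $2\|\phi\|_{L^1}\int_{|z|>r}\dd\mu(z)$, which is finite by \eqref{muassumption1}. For the third bound I would split at $r=1$: since $b^{\mu,1}=-\int_{|z|>1}z\,\mathbf{1}_{|z|\le 1}\dd\mu(z)=0$, we have $\mathcal{L}^{\mu}[\phi]=\mathcal{L}_1^{\mu}[\phi]+\mathcal{L}^{\mu,1}[\phi]$. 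The two previous bounds (passed to the limit $r\to 1^{-}$ and $r\to 1^{+}$ by monotone convergence, since the set $\{|z|=1\}$ is $\mu$-null up to a harmless perturbation of $r$) combine to
\begin{equation*}
\|\mathcal{L}^{\mu}[\phi]\|_{L^1}\le \tfrac{1}{2}\|D^2\phi\|_{L^1}\!\!\int_{0<|z|\le 1}\!\!|z|^2\dd\mu(z)+2\|\phi\|_{L^1}\!\!\int_{|z|>1}\!\!\dd\mu(z)\le 2\|\phi\|_{W^{2,1}}\!\!\int_{\R^d\setminus\{0\}}\!\!\min\{|z|^2,1\}\dd\mu(z).
\end{equation*}

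The main (mild) obstacle is justifying the Taylor expansion when $\phi$ is only in $W^{2,1}(\R^d)$, where pointwise second derivatives need not exist. I would handle this by standard mollification: choose $\phi_\eps=\phi*\hat\omega_\eps\in C^\infty(\R^d)\cap W^{2,1}(\R^d)$, establish the three bounds for $\phi_\eps$ as above, and then pass to the limit using $\phi_\eps\to\phi$ in $W^{2,1}$ and continuity of the operators $\mathcal{L}_r^{\mu}$, $\mathcal{L}^{\mu,r}$, $\mathcal{L}^{\mu}$ on the appropriate Sobolev spaces (which is already implicit in the very estimates being proved, so the argument is self-contained).
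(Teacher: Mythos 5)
Your proof is correct and follows the standard argument (second-order Taylor expansion with integral remainder plus Fubini and translation invariance for the small-jump part, triangle inequality plus translation invariance for the tail, and a splitting at $|z|=1$ using $b^{\mu,1}=0$); the paper itself gives no proof but delegates to Lemmas 4.1--4.2 of \cite{AlCiJa12}, which argue in exactly this way. One small correction: $\{|z|=1\}$ need not be $\mu$-null, so rather than passing to the limits $r\to 1^{\pm}$ you should simply note that both estimates and their proofs hold verbatim at $r=1$, since the Taylor bound only uses $\mathbf{1}_{|z|\leq 1}=1$ on $0<|z|\leq 1$ and the tail bound only uses $\mu(\{|z|>1\})<\infty$.
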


See e.g. Lemma 4.1 and Lemma 4.2 in \cite{AlCiJa12} for proofs of the above lemmas.
The main result of this section is  a "Kato inequality" or a "dual
equation" for \eqref{E}.  

\begin{proposition}
Assume \eqref{fassumption} and \eqref{Aassumption} hold. Let $u$
and $v$ be entropy sub- and supersolutions of \eqref{E} with initial
data $u_0, v_0\in L^\infty(\R^d)$ and measurable source terms $g,h$ satisfying
$\int_0^T\|g(\cdot,t)\|_{L^\infty(\mathbb{R}^d)}+\|h(\cdot,t)\|_{L^\infty(\mathbb{R}^d)}\dd t<\infty$. If either $\mathfrak{L}=\Delta$ or 
$\mathfrak{L}=\mathcal{L}^{\mu}$ and \eqref{muassumption1} holds, then
for all non-negative $\psi\in C_c^{\infty}(Q_T)$ 
\begin{equation}
\begin{split}
&\iint_{Q_T}\eta(u(x,t),v(x,t))\dell_t\psi(x,t)+q(u(x,t),v(x,t))\cdot D\psi(x,t)\dd x \dd t\\
&+\iint_{Q_T}\eta(\varphi(u(x,t)),\varphi(v(x,t)))\mathfrak{L}^*\psi(x,t)\dd x \dd t\\
&+\iint_{Q_T}\eta(g(x,t),h(x,t))\psi(x,t)\dd x \dd t\geq0,
\end{split}
\label{dualeq}
\end{equation}
where $\eta(u,v)=(u-v)^+$ and $q(u,v)=\textup{sign}(u-v)^+[f(u)-f(v)]$.
\label{dualequation}
\end{proposition}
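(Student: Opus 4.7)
The plan is to run Kru\v{z}kov's doubling-of-variables argument simultaneously on $u(x,t)$ and $v(y,s)$. I would apply the entropy subsolution inequality for $u$ (Definition~\ref{defentsolnlap} or \ref{defentsoln}) with entropy constant $k=v(y,s)$ and test function $(x,t)\mapsto\phi_\delta(x,t,y,s):=\psi(x,t)\,\rho_\delta(x-y,t-s)$ for nonnegative $\psi\in C_c^\infty(Q_T)$, then integrate over $(y,s)$; and do the mirror-image thing for $v$ with $k=u(x,t)$, integrating over $(x,t)$. Adding the two inequalities and using $(v-u)^-=(u-v)^+$ and $\sgn(v-u)^-=\sgn(u-v)^+$, every term acquires a common $(u-v)^+$ or $\sgn(u-v)^+$ factor, and sending $\delta\to 0$ collapses the mollifier onto the diagonal $x=y$, $t=s$.

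\textbf{Time, convection, source.} Since $\dell_t\phi_\delta+\dell_s\phi_\delta=\psi_t\,\rho_\delta$ and $D_x\phi_\delta+D_y\phi_\delta=D\psi\,\rho_\delta$ (the $\rho_\delta$-derivatives in $(y,s)$ cancel those in $(x,t)$), in the limit $\delta\to 0$ the time and convection contributions coalesce into
\[
\iint_{Q_T}(u-v)^+\dell_t\psi+\sgn(u-v)^+[f(u)-f(v)]\cdot D\psi\,\dd x\,\dd t.
\]
The source terms combine into $\iint_{Q_T}\sgn(u-v)^+(g-h)\psi\,\dd x\,\dd t$; because $\psi\geq 0$ and $\sgn(u-v)^+(g-h)\leq(g-h)^+=\eta(g,h)$ pointwise, this may be replaced by the larger $\iint_{Q_T}\eta(g,h)\psi\,\dd x\,\dd t$ without breaking the $\geq 0$ conclusion, producing the source term of \eqref{dualeq}.

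\textbf{Diffusion.} In the local case $\mathfrak{L}=\Delta$, the diffusion term in the sum of the two doubled inequalities contains, besides the desired $(\varphi(u)-\varphi(v))^+\Delta\psi\,\rho_\delta$, an unbounded piece involving second derivatives of $\rho_\delta$. This is the classical Carrillo obstruction, and it is resolved by exploiting the $L^2((0,T);H^1_{\textup{loc}})$ regularity of $\varphi(u)$ and $\varphi(v)$ (Definition~\ref{defentsolnlap}(a)(ii)) to integrate by parts and absorb the offending piece into a non-negative entropy-dissipation term that is then discarded. In the nonlocal case, the test-function piece $\mathcal{L}_r^{\mu^*}[\phi_\delta]+b^{\mu^*,r}\cdot D\phi_\delta$ limits directly to $\iint(\varphi(u)-\varphi(v))^+(\mathcal{L}_r^{\mu^*}\psi+b^{\mu^*,r}\cdot D\psi)$ since $\psi$ is smooth, while the ``bad'' piece $\sgn(u-k)^+\mathcal{L}^{\mu,r}[\varphi(u)]\phi_\delta$ and its $v$-analogue combine, via the adjoint identity $(\mathcal{L}^{\mu,r})^*=\mathcal{L}^{\mu^*,r}$ and the pointwise inequality
\[
\sgn(u-v)^+\bigl[\mathcal{L}^{\mu,r}\varphi(u)-\mathcal{L}^{\mu,r}\varphi(v)\bigr]\leq\mathcal{L}^{\mu,r}\bigl[(\varphi(u)-\varphi(v))^+\bigr]
\]
(valid because $\varphi$ is nondecreasing and $\mu\geq 0$), to $\iint(\varphi(u)-\varphi(v))^+\mathcal{L}^{\mu^*,r}\psi$. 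Summing the two pieces and letting $r\to 0^+$, with Lemma~\ref{L1normlowerr} controlling the $r$-dependence, reassembles $\mathfrak{L}^*\psi$ acting on the semi-entropy.

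\textbf{Main obstacle.} The delicate step is the diffusion: the second-order operator acts on the non-smooth quantity $\varphi(u)$, so one cannot simply integrate by parts to move it onto $\psi$. In the local case the fix is the Carrillo identity, which relies crucially on Definition~\ref{defentsolnlap}(a)(ii); in the nonlocal case the fix is the operator split of Definition~\ref{defentsoln}, which isolates the ``irregular'' action of $\mathcal{L}^{\mu,r}$ on $\varphi(u)$ in a form where monotonicity of $\varphi$ and positivity of $\mu$ can be invoked pointwise to produce the positive part $(\varphi(u)-\varphi(v))^+$, combined with the $L^1$ estimates of Lemma~\ref{L1normlowerr} to justify the final passage $r\to 0^+$ under the mild assumption \eqref{muassumption1}.
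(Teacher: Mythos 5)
Your overall strategy is the paper's: Kru\v{z}kov doubling of variables, cancellation of the mollifier derivatives in the time/convection terms, and (in the non-local case) the Cifani--Jakobsen manipulation that converts $\sgn(u-v)^+[\mathcal{L}^{\mu,r}\varphi(u)-\mathcal{L}^{\mu,r}\varphi(v)]$ into $(\varphi(u)-\varphi(v))^+$ acted on by the adjoint operator applied to the test function. (For $\mathfrak{L}=\Delta$ the paper does not redo Carrillo's argument at all; it simply cites \cite{MaTo03,KaRi03} after checking their hypotheses hold for $C([0,T];L^1_{\textup{loc}})\cap L^\infty$ solutions.)

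However, you have glossed over the one step that is actually new in this proposition, and it is a genuine gap. You write that ``sending $\delta\to 0$ collapses the mollifier onto the diagonal'' as if this were routine, but for solutions that are merely in $L^\infty(Q_T)\cap C([0,T];L^1_{\textup{loc}})$ and \emph{not} in $L^1$, the standard de-doubling argument for the non-local term (as in \cite{CiJa11}) breaks down: one can no longer put the solution in $L^1$ and the test-function factor in $L^\infty$. The paper's proof spends essentially all of its effort here, reversing the roles: it bounds $\eta(\varphi(u),\varphi(v))$ in $L^\infty(Q_T)$ and uses that $\mathcal{L}^{\mu^*}[\psi]\in L^1(Q_T)$ (Lemma \ref{L1normlowerr}), so that the limit follows from continuity of the $L^1$-translation applied to $\mathcal{L}^{\mu^*}[\psi]$ rather than to $u,v$. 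Your proposal never identifies this, and without it the limit passage is unjustified. A secondary issue is the order of limits: you claim the term $\iint(\varphi(u)-\varphi(v))^+\mathcal{L}_r^{\mu^*}[\phi_\delta]$ ``limits directly'' as $\delta\to0$ for fixed $r$, but $\mathcal{L}_r^{\mu^*}[\phi_\delta]$ involves $D^2\rho_\delta\sim\delta^{-2}$, which is not controlled; the paper avoids this by sending $r\to0$ \emph{first} (killing the small-jump part for fixed mollification parameters, using $\int_{|z|\le r}|z|^2\dd\mu\to0$), so that only the diagonal operator $\tilde{\mathcal{L}}^{\mu^*}$, under which $\rho_\delta(x-y)$ is invariant, survives into the de-doubling step.
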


The proof relies on the Kru\v{z}kov doubling of variables technique,
and the result is new in the non-local case. 

\begin{proof}
  If $\mathfrak{L}=\Delta$ this is a known result, see
  e.g. \cite[Theorem 3.9]{MaTo03}. The result can
  also be obtained by following the calculations of Karlsen and Risebro,
  see the proofs of Lemmas 2.3 and 2.4 and Theorem 1.1 in
  \cite{KaRi03}. Our assumptions 
and Definition \ref{defentsolnlap} ensure that equation (3.48) in \cite{KaRi03}
  holds (with $\text{Const}=0$ and
  $F(x,t,u,v)=F(u,v)=\sgn(u-v)[f(u)-f(v)]$) when the solutions $u,v$
  are in $C([0,T]; L_\textup{loc}^1(\R^d))\cap L^\infty(Q_T)$ in stead
  of $C([0,T]; L^1(\R^d))\cap L^\infty(Q_T)$. 

For $\mathfrak{L}=\mathcal{L}^{\mu}$ we follow the Proof of Theorem
3.1 in \cite{CiJa11} closely; sketching known estimates and focusing
on new ones (which are needed since $u,v\notin L^1$ anymore). We start
with the Kru\v{z}kov doubling of variables technique
\cite{Kru70,Ali07,CiJa11}. Since $u$ and $v$ are sub- and
supersolutions, we can 
take \eqref{defentsubsoln} with $u=u(x,t)$ and $k=v(y,s)$, and
\eqref{defentsupersoln} with $u=v(x,t)$ and $k=u(y,s)$. Integrate the
two inequalities over $(y,s)\in Q_T$, rename $(x,t,y,s)$ as
$(y,s,x,t)$ in the second one, and add the two inequalities. Then note
that $(v-u)^-=(u-v)^+$,
$(\varphi(v)-\varphi(u))^-=(\varphi(u)-\varphi(v))^+$, and that we can
manipulate (cf. \cite[Proof of Theorem 3.1]{CiJa11}) the integral with integrand
$\sgn(u-v)^+(\mathcal{L}^{\mu,r}[\varphi(u)]-\mathcal{L}^{\mu,r}[\varphi(v)])\phi$
to get the integrand on the form
$(\varphi(u)-\varphi(v))^+\tilde{\mathcal{L}}^{\mu^*,\,r}[\phi]$, where
\begin{equation*}
\tilde{\mathcal{L}}^{\mu^*,\,r}[\phi](x,y):=\int_{|z|>r}\phi(x+z,y+z)-\phi(x,y)\dd \mu^*(z).
\end{equation*}
Now, we let $\dd w:=\dd x \dd t \dd y \dd s$ and send $r\to0$ to find that
\begin{equation}
\begin{split}
&\iiiint_{Q_T\times Q_T}(u-v)^+(\dell_t+\dell_s)\phi\\
&\qquad\qquad\qquad+\text{sign}(u-v)^+[f(u)-f(v)]\cdot (D_x+D_y)\phi\,\dd w\\
&+\iiiint_{Q_T\times Q_T}(\varphi(u)-\varphi(v))^+\tilde{\mathcal{L}}^{\mu^*}[\phi(\cdot, t, \cdot, s)](x,y)\dd w\\
&+\iiiint_{Q_T\times Q_T}(g-h)^+\phi\, \dd w\geq0,
\end{split}
\label{lettingrtozero}
\end{equation}
where we have used that $\sgn(u-v)^+(g-h)\leq(g-h)^+$. Take 
\begin{equation*}
\phi(x,t,y,s)=\hat{\omega}_{\veps_1}\left(\frac{x-y}{2}\right)\omega_{\veps_2}\left(\frac{t-s}{2}\right)\psi\left(\frac{x+y}{2},\frac{t+s}{2}\right)
\end{equation*}
for $\veps_1,\veps_2>0$, $\psi\in C_c^{\infty}(Q_T)$ where
$\omega_\veps$ is a mollifier (see \eqref{mollifierspace}), and
$\hat{\omega}_{\veps_1}(x)$ is defined by \eqref{domega}. We
insert this test function into \eqref{lettingrtozero}, noting that
\begin{equation*}
\tilde{\mathcal{L}}^{\mu^*}[\phi(\cdot, t, \cdot, s)](x,y)=\hat{\omega}_{\veps_1}\left(\frac{x-y}{2}\right)\omega_{\veps_2}\left(\frac{t-s}{2}\right)\mathcal{L}^{\mu^*}\left[\psi\left(\cdot, \frac{t+s}{2}\right)\right]\left(\frac{x+y}{2}\right),
\end{equation*}
and then we want to take the limit as $(\veps_1,\veps_2)\to(0,0)$. 

So far the proof is quite similar to the proof of Theorem 3.1 in
\cite{CiJa11}. Taking the last limit, however, requires some
attention. Some of the arguments of \cite{CiJa11} will not hold here
since the solutions are no longer in $L^1$.

The convergence as $(\veps_1,\veps_2)\to(0,0)$ of the local terms is
well-known (cf. \cite[Proof of Theorem 6.2.3]{Daf10}), and the
convergence of the source term follows from a simple computation. So
here we give details only for the non-local term.  We need to show that
$M\to0$ for
\begin{equation*}
\begin{split}
M:=&\bigg|\iiiint_{Q_T\times Q_T}\eta(\varphi(u(x,t)),\varphi(v(y,s)))\\
&\qqquad\hat{\omega}_{\veps_1}\left(\frac{x-y}{2}\right)\omega_{\veps_2}\left(\frac{t-s}{2}\right)\mathcal{L}^{\mu^*}\left[\psi\left(\cdot, \frac{t+s}{2}\right)\right]\left(\frac{x+y}{2}\right)\dd w\\
&-\iint_{Q_T}\eta(\varphi(u(x,t)),\varphi(v(x,t)))\mathcal{L}^{\mu^*}[\psi(\cdot,t)](x)\dd x \dd t\bigg|
\end{split}
\end{equation*}
and $\eta(a,b)=(a-b)^+$.
To do that, we add and subtract
\begin{equation*}
\begin{split}
&\iiiint_{Q_T\times Q_T}\eta(\varphi(u(x,t)),\varphi(v(x,t)))\\
&\qqquad\hat{\omega}_{\veps_1}\left(\frac{x-y}{2}\right)\omega_{\veps_2}\left(\frac{t-s}{2}\right)\mathcal{L}^{\mu^*}\left[\psi\left(\cdot, \frac{t+s}{2}\right)\right]\left(\frac{x+y}{2}\right)\dd w,
\end{split}
\end{equation*}
and use that 
\begin{equation}
\iint_{Q_T}\hat{\omega}_{\veps_1}\left(\frac{x-y}{2}\right)\omega_{\veps_2}\left(\frac{t-s}{2}\right)\dd y \dd s=1,
\label{mollifiersequalsone}
\end{equation}
to get that
\begin{equation*}
\begin{split}
M\leq& \iiiint_{Q_T\times Q_T}\left|\eta(\varphi(u(x,t)),\varphi(v(y,s)))-\eta(\varphi(u(x,t)),\varphi(v(x,t)))\right|\\
&\qqquad\hat{\omega}_{\veps_1}\left(\frac{x-y}{2}\right)\omega_{\veps_2}\left(\frac{t-s}{2}\right)\mathcal{L}^{\mu^*}\left[\psi\left(\cdot, \frac{t+s}{2}\right)\right]\left(\frac{x+y}{2}\right)\dd w\\
&+\iiiint_{Q_T\times Q_T}\eta(\varphi(u(x,t)),\varphi(v(x,t)))\hat{\omega}_{\veps_1}\left(\frac{x-y}{2}\right)\omega_{\veps_2}\left(\frac{t-s}{2}\right)\\
&\qqquad\left|\mathcal{L}^{\mu^*}\left[\psi\left(\cdot,\frac{t+s}{2}\right)\right]\left(\frac{x+y}{2}\right)-\mathcal{L}^{\mu^*}[\psi(\cdot,t)](x)\right|\dd w\\
&=:M_1+M_2.
\end{split}
\end{equation*}
Since $|\eta(\varphi(u(x,t)),\varphi(v(y,s)))-\eta(\varphi(u(x,t)),\varphi(v(x,t)))|\leq |\varphi(v(x,t))-\varphi(v(y,s))|$, extensive use of adding and subtracting terms, and the triangle inequality will give
\begin{equation*}
\begin{split}
M_1\leq& \iiiint_{Q_T\times Q_T}\hat{\omega}_{\veps_1}\left(\frac{x-y}{2}\right)\omega_{\veps_2}\left(\frac{t-s}{2}\right)\\
&\qqquad\Bigg\{\left|\varphi(v(x,t))\right|\bigg|\mathcal{L}^{\mu^*}\left[\psi\left(\cdot,\frac{t+s}{2}\right)\right]\left(\frac{x+y}{2}\right)-\mathcal{L}^{\mu^*}[\psi(\cdot,t)](x)]\bigg|\\
&\qqquad+\Big|\varphi(v(x,t))\big|\Levy^{\mu^*}[\psi(\cdot,t)](x)\big|-\varphi(v(y,s))\big|\Levy^{\mu^*}[\psi(\cdot,s)](y)\big|\Big|\\ 
&\qqquad+\left|\varphi(v(y,s))\right|\bigg|\mathcal{L}^{\mu^*}\left[\psi\left(\cdot,\frac{t+s}{2}\right)\right]\left(\frac{x+y}{2}\right)-\Levy^{\mu^*}[\psi(\cdot,s)](y)\bigg|\Bigg\}\dd w.\\
\end{split}
\end{equation*}

Let us now show the convergence to zero of the term
\begin{equation*}
\begin{split}
M_2=&\iiiint_{Q_T\times Q_T}\hat{\omega}_{\veps_1}\left(\frac{x-y}{2}\right)\omega_{\veps_2}\left(\frac{t-s}{2}\right)\eta(\varphi(u(x,t)),\varphi(v(x,t)))\\
&\qqquad\left|\mathcal{L}^{\mu^*}\left[\psi\left(\cdot,\frac{t+s}{2}\right)\right]\left(\frac{x+y}{2}\right)-\mathcal{L}^{\mu^*}[\psi(\cdot,t)](x)\right|\dd w.
\end{split}
\end{equation*}
Note that $\mathcal{L^{\mu}}[\psi]\in L^1(Q_T)$ by Lemma \ref{L1normlowerr}, and that $u,v\in L^{\infty}(Q_T)$ and, hence, $\varphi(u),\varphi(v)\in L^{\infty}(Q_T)$ by \eqref{Aassumption}. By a change of variables $y-x=y'$ and $s-t=s'$, changing the order of integration, H\"older's inequality, and \eqref{mollifiersequalsone} we get
\begin{equation*}
\begin{split}
M_2\leq&\,\|\eta(\varphi(u),\varphi(v))\|_{L^\infty(Q_T)}\\
&\sup_{|y'|\leq\veps_1,\,|s'|\leq\veps_2}\left\|\mathcal{L}^{\mu^*}\left[\psi\left(\cdot,t+\frac{s'}{2}\right)\right]\left(x+\frac{y'}{2}\right)-\mathcal{L}^{\mu^*}[\psi(\cdot,t)](x)\right\|_{L^1(Q_T)},
\end{split}
\end{equation*}
which goes to zero as $(\veps_1,\veps_2)\to(0,0)$ by the continuity of
the $L^1$ translation. In a similar way, we can also show that
$M_1\to0$ and the proof is complete.
\end{proof}

In the next section we need the following corollary of Proposition \ref{dualequation}:

\begin{corollary}\label{manipulateddualeq}
Assume \eqref{fassumption}, 
\eqref{Aassumption} hold, and either $\mathfrak{L}=\Delta$ or
$\mathfrak{L}=\mathcal{L}^{\mu}$ and \eqref{muassumption1} holds. Let
$u$ and $v$ be entropy sub- and supersolutions of \eqref{E} with
initial data $u_0, v_0\in L^\infty(\R^d)$ and measurable source terms $g,h$ satisfying $\int_0^T\|g(\cdot,t)\|_{L^\infty(\mathbb{R}^d)}+\|h(\cdot,t)\|_{L^\infty(\mathbb{R}^d)}\dd t<\infty$. Let
$\psi(x,t)=\Gamma(x,t)\Theta(t)$. 
\begin{enumerate}[(a)]
\item If $0<t<T$, $0\leq\Gamma\in C_c^\infty(Q_T)$, and $0\leq\Theta\in C_c^\infty((0,T))$, then
\begin{equation}
\begin{split}
0\leq&\iint_{Q_T}(u-v)^+(x,t)\Gamma(x,t)\Theta'(t)\dd x \dd t\\
&+\iint_{Q_T}\Theta(t)(u-v)^+(x,t)\left[\dell_t \Gamma +L_f|D\Gamma|+L_\varphi\big(\mathfrak{L}^*\Gamma(x, t)\big)^+\right]\dd x \dd t\\
&+\int_0^T\Theta(t)\int_{\R^d}(g-h)^+(x,t)\Gamma(x,t)\dd x \dd t.
\end{split}
\label{mandualeq}
\end{equation}
\item If $\varphi(u)=u$ and $0\leq\Gamma\in C([0,T]; L^1(\mathbb{R}^d))\cap L^1((0,T);W^{2,1}(\mathbb{R}^d)) \cap C^{\infty}(Q_T)\cap L^\infty(Q_T)$ satisfies
\begin{equation*}
\dell_t \Gamma +L_f|D\Gamma|+\mathfrak{L}^{*}\Gamma(x, t)\leq 0 \quad
\text{in}\quad Q_T,
\end{equation*}
then
\begin{equation*}
\begin{split}
&\int_{\mathbb{R}^d}(u-v)^+(x,T)\,\Gamma(x,T)\dd x\\
&\leq \int_{\mathbb{R}^d}(u_0-v_0)^+(x)\,\Gamma(x,0)\dd x+\int_0^T\int_{\R^d}(g-h)^+(x,t)\Gamma(x,t)\dd x \dd t.
\end{split}
\end{equation*}
\item If $0\leq\Gamma\in C([0,T]; L^1(\mathbb{R}^d))\cap L^1((0,T);W^{2,1}(\mathbb{R}^d)) \cap C^{\infty}(Q_T)\cap L^\infty(Q_T)$ satisfies
\begin{equation*}
\dell_t \Gamma
+L_f|D\Gamma|+L_\varphi\big(\mathfrak{L}^{*}\Gamma(x,t)\big)^+\leq 0
\quad \text{in}\quad Q_T, 
\end{equation*}
then
\begin{equation*}
\begin{split}
&\int_{\mathbb{R}^d}(u-v)^+(x,T)\Gamma(x,T)\dd x\\
&\leq \int_{\mathbb{R}^d}(u_0-v_0)^+(x)\Gamma(x,0)\dd x+\int_0^T\int_{\R^d}(g-h)^+(x,t)\Gamma(x,t)\dd x \dd t.
\end{split}
\end{equation*}
\end{enumerate}
\end{corollary}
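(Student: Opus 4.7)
For part (a), I would just plug the separable test function $\psi(x,t) = \Gamma(x,t)\Theta(t) \in C_c^\infty(Q_T)$ into Proposition \ref{dualequation} and estimate pointwise. Since $\partial_t\psi = \Gamma\Theta' + \Theta\,\partial_t\Gamma$, $D\psi = \Theta\,D\Gamma$, and $\mathfrak{L}^*\psi = \Theta\,\mathfrak{L}^*\Gamma$, the Lipschitz bound on $f$ gives $q(u,v)\cdot D\Gamma \leq L_f (u-v)^+ |D\Gamma|$, and the Lipschitz continuity and monotonicity of $\varphi$ combined with the elementary inequality $a \leq a^+$ give $(\varphi(u)-\varphi(v))^+\mathfrak{L}^*\Gamma \leq L_\varphi (u-v)^+ (\mathfrak{L}^*\Gamma)^+$. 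Because $\Theta \geq 0$, substituting these bounds into \eqref{dualeq} yields \eqref{mandualeq}.

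For parts (b) and (c), the plan is to recover the statement for non-compactly supported $\Gamma$ from part (a) (in case (c)) or directly from Proposition \ref{dualequation} (in case (b), where $\varphi=\mathrm{id}$ so no positive part is needed and we keep the full diffusion term) by cutting off both in space and in time. Let $\chi_R(x) = \chi(x/R)$ with a radial $\chi \in C_c^\infty(\R^d)$ satisfying $\chi \equiv 1$ on $B(0,1)$ and $0 \leq \chi \leq 1$, and let $\Theta_\tau \in C_c^\infty((0,T))$ be a smooth approximation of $\mathbf{1}_{(\tau,T-\tau)}$ whose derivative concentrates near $t=0$ and $t=T$. Applying part (a) with $\Gamma\chi_R$ and $\Theta_\tau$ in place of $\Gamma$ and $\Theta$, I would first send $R \to \infty$ to eliminate the spatial cutoff, using $\Gamma \in C([0,T];L^1(\R^d))\cap L^1((0,T);W^{2,1}(\R^d))$ together with $\|(u-v)^+\|_{L^\infty} < \infty$ and dominated convergence. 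Then $\tau \to 0$ turns $\iint (u-v)^+\Gamma\Theta_\tau'\dd x\dd t$ into the boundary contribution $\int_{\R^d}(u_0-v_0)^+\Gamma(\cdot,0)\dd x - \int_{\R^d}(u-v)^+(\cdot,T)\Gamma(\cdot,T)\dd x$ via the strong initial trace $u(\cdot,0)=u_0$, $v(\cdot,0)=v_0$, the $C([0,T];L^1_{\mathrm{loc}})$-continuity of $u,v$, and the $C([0,T];L^1)$-continuity of $\Gamma$; rearranging yields (b) and (c).

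The main obstacle is controlling the commutator error produced by the spatial cutoff: I need a pointwise inequality
\begin{equation*}
\partial_t(\Gamma\chi_R) + L_f|D(\Gamma\chi_R)| + L_\varphi\big(\mathfrak{L}^*(\Gamma\chi_R)\big)^+ \leq \chi_R\big[\partial_t\Gamma + L_f|D\Gamma| + L_\varphi(\mathfrak{L}^*\Gamma)^+\big] + E_R,
\end{equation*}
with $\|E_R\|_{L^1(Q_T)} \to 0$ as $R \to \infty$; the bracketed principal part is then $\leq 0$ by hypothesis, and $\iint (u-v)^+ \Theta_\tau E_R\dd x \dd t \to 0$ by boundedness of $(u-v)^+$ and the $L^1$ decay of $E_R$ (the analogous estimate without positive part handles (b)). For $\mathfrak{L} = \Delta$, this is routine from the Leibniz rule $\Delta(\Gamma\chi_R) = \chi_R\Delta\Gamma + 2D\Gamma \cdot D\chi_R + \Gamma\Delta\chi_R$ together with $|D\chi_R|_\infty \lesssim R^{-1}$, $|D^2\chi_R|_\infty \lesssim R^{-2}$ supported on $R \leq |x| \leq 2R$, and $\Gamma \in L^1((0,T); W^{2,1}(\R^d))$. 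For $\mathfrak{L} = \mathcal{L}^\mu$, I would split $\mathcal{L}^{\mu^*}$ as in Section \ref{sec:main} into a small-jump and a large-jump part: the small-jump commutator is controlled via Taylor expansion of $\chi_R$ together with $\int_{|z|\leq r}|z|^2\dd\mu^*(z)$ and the $W^{2,1}$ norm of $\Gamma$, while for the large-jump piece the integrand $\Gamma(x+z)[\chi_R(x+z)-\chi_R(x)]$ tends to $0$ pointwise as $R\to\infty$ and is dominated by $2|\Gamma(x+z)|$, so Fubini and dominated convergence (using $\int_{|z|>r}\dd\mu^*(z) < \infty$ from \eqref{muassumption1}) yield the required decay.
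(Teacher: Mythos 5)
Your proposal is correct and follows essentially the same route as the paper: part (a) is the paper's argument verbatim (plug $\psi=\Gamma\Theta$ into Proposition \ref{dualequation} and use $|q(u,v)|\leq L_f(u-v)^+$ and $(\varphi(u)-\varphi(v))^+\mathfrak{L}^*\Gamma\leq L_\varphi(u-v)^+(\mathfrak{L}^*\Gamma)^+$), and for (b)--(c) the paper likewise reduces to compactly supported test functions and then performs the same square-pulse-in-time limit with Fatou at $t\to T^-$ and the strong initial trace at $t\to0^+$. The only difference is that where you remove the spatial cutoff by hand via a one-sided commutator estimate for $\Delta$ and $\mathcal{L}^{\mu^*}$, the paper instead invokes the density of $C_c^\infty(Q_T)$ in $\{w\in C([0,T];L^1)\cap L^1((0,T);W^{2,1}):\partial_t w\in L^1\}$ together with Lemma \ref{L1normlowerr} to pass $\mathfrak{L}^*\Gamma_\veps\to\mathfrak{L}^*\Gamma$ in $L^1(Q_T)$ — your explicit truncation is exactly what underlies that density statement, so the two implementations coincide in substance.
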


\begin{proof}
a) Remember that $(u-v)^+=\eta(u,v)$. The proof is a simple consequence of Equation \eqref{dualeq}, and the following easy estimates: $|q(u,v)\cdot D\Gamma|\leq|q(u,v)||D\Gamma|$, $|q(u,v)|\leq L_f\eta(u,v)$ (see \cite[p. 151]{Daf10}), and $\eta(\varphi(u),\varphi(v))\leq L_\varphi\eta(u,v)$ (by \eqref{Aassumption}) which implies that 
\begin{equation*}
\eta(\varphi(u),\varphi(v))\mathfrak{L}^{*}[\Gamma]\leq L_\varphi\eta(u,v)\big(\mathfrak{L}^*[\Gamma]\big)^+.
\end{equation*} 

\noindent b) Similar but easier than c), we omit the proof. See also
\cite{Ali07} for a proof when
$\mathfrak{L}^*=-(-\Delta)^{\frac{\alpha}{2}}$. 
\medskip

\noindent c) Since $C_c^{\infty}(Q_T)$ is dense in 
\begin{equation*}
E=\{w: w\in C([0,T];L^1(\mathbb{R}^d))\cap L^1((0,T);W^{2,1}(\mathbb{R}^d)) \text{ and } \dell_t w\in L^1(Q_T)\}
\end{equation*}
(cf. \cite[p. 159]{Ali07}), there is a sequence of functions
$\Gamma_\veps\in C_c^{\infty}(Q_T)$ such that  
\begin{equation*}
\Gamma_\veps, \dell_t \Gamma_\veps,\, |D\Gamma_\veps|,\,\mathfrak{L}^*\Gamma_\veps\to\Gamma,\dell_t\Gamma,\, |D\Gamma|,\,  \mathfrak{L}^*\Gamma \quad \text{in $L^1(Q_T)$}, 
\end{equation*}
when $\veps\to0^+$. Here we used that $\|\mathfrak{L}^*\Gamma_\veps\|_{L^1(Q_T)}\leq c\|\Gamma_\veps\|_{L^1((0,T);W^{2,1}(\R^d))}$ by the definition of $\Delta$ and by Lemma \ref{L1normlowerr}. Corollary \ref{manipulateddualeq} a) gives that Equation \eqref{mandualeq} holds with $\Gamma_\veps$ replacing $\Gamma$, and then also for $\Gamma$ by sending $\veps\to0^+$.

By \eqref{mandualeq} and the extra assumption on $\Gamma$ we see that
\begin{equation}
\begin{split}
&\iint_{Q_T}(u-v)^+(x,t)\Gamma(x,t)\Theta'(t)\dd x \dd t\\
&+\int_0^T\Theta(t)\int_{\R^d}(g-h)^+(x,t)\Gamma(x,t)\dd x \dd t\geq0.
\end{split}
\label{reddualeq}
\end{equation}
Let $0\leq\Theta\in C_c^{\infty}((0,T))$ be defined by
\begin{equation*}
\Theta(t)=\Theta_\veps(t)=\int_{-\infty}^t\omega_\veps(s-t_1)-\omega_\veps(s-t_2)\dd s,
\end{equation*}
where $0<t_1<t_2<T$. For $\veps>0$ small enough, $\Theta_\veps(t)$ is supported in $[0,T]$, and is a smooth approximation to a square pulse which is one between $t=t_1$ and $t=t_2$ and zero otherwise. By \eqref{reddualeq}, we get
\begin{equation*}
\begin{split}
&\iint_{Q_T}(u-v)^+(x,t)\Gamma(x,t)\omega_\veps(t-t_2)\dd x \dd t\\
&\leq\iint_{Q_T}(u-v)^+(x,t)\Gamma(x,t)\omega_\veps(t-t_1)\dd x \dd t\\
&\quad+\int_0^T\Theta_\veps(t)\int_{\R^d}(g-h)^+(x,t)\Gamma(x,t)\dd x \dd t.
\end{split}
\end{equation*}

Since $\eta(u,v)\in L^\infty(Q_T)$ and $\Gamma\in
C([0,T];L^1(\mathbb{R}^d))$, a direct argument, and using the
continuity of the $L^1$ translation shows the convergence of the
integrals involving $(u-v)^+\Gamma\omega_\veps$ as
$\veps\to0^+$. Moreover, since $\int_{\R^d}(g-h)^+(x,t)\Gamma(x,t)\dd
x$ is finite, Lebesgue's dominated convergence theorem will give convergence
of the integral involving $\Theta_\veps(g-h)^+\Gamma$ as
$\veps\to0^+$. Thus, we end up with 
\begin{equation*}
\begin{split}
&\int_{\mathbb{R}^d}(u-v)^+(x,t_2)\Gamma(x,t_2)\dd x\\
& \leq \int_{\mathbb{R}^d}(u-v)^+(x,t_1)\Gamma(x,t_1)\dd x\\
&\quad+\int_{t_1}^{t_2}\int_{\R^d}(g-h)^+(x,t)\Gamma(x,t)\dd x \dd t.
\end{split}
\end{equation*}

Finally, the conclusion can be obtained by letting $t_2\to T^-$ and $t_1\to 0^+$. Since $u,v\in C([0,T];L_\textup{loc}^1(\R^d))$ and $\Gamma\in C([0,T];L^1(\R^d))$, we can use Fatou's lemma on the left-hand side (the integrand is non-negative) as $t_2\to T^-$. The computations as $t_1\to 0^+$ of the first integral on the right-hand side is shown in the following:
\begin{equation*}
\begin{split}
&\|(u-v)^+(\cdot,t_1)\Gamma(\cdot,t_1)-(u-v)^+(\cdot,0)\Gamma(\cdot,0)\|_{L^1(\mathbb{R}^d)}\\
&\leq \|(u-v)^+\|_{L^{\infty}(Q_T)}\|\Gamma(\cdot,t_1)-\Gamma(\cdot,0)\|_{L^1(\mathbb{R}^d)}\\
&\quad+\|((u-v)^+(\cdot,t_1)-(u-v)^+(\cdot,0))\Gamma(\cdot,0)\|_{L^1(\mathbb{R}^d)},
\end{split}
\end{equation*}
where the first term goes to zero as $t_1\to 0^+$ since $\Gamma\in
C([0,T];L^1(\mathbb{R}^d))$. The second term, however, needs a more
refined argument. By Definition \ref{defentsolnlap} or \ref{defentsoln} a) it follows that as
$t\to 0^+$, $u(\cdot,t)\to u(\cdot,0)$ in $L_\textup{loc}^1(\R^d)$ and
hence also point-wise a.e. (along a subsequence). Moreover,
$|(u-v)^+(x,t_1)-(u-v)^+(x,0)|\Gamma(x,0)$ is dominated by
$2\|(u-v)^+\|_{L^\infty(Q_T)}\Gamma(x,0)\in L^1(\mathbb{R}^d)$. Hence,
Lebesgue's dominated convergence theorem ensures that the second term also
goes to zero when $t_1\to 0^+$.  

We conclude by using Lebesgue's dominated convergence theorem on the integral
involving $(g-h)^+\Gamma$ as $t_2\to T^-$ and $t_1\to0^+$, and by
noting that $(u-v)^+(x,0)\leq(u_0-v_0)^+(x)$ by Definition
\ref{defentsolnlap} or \ref{defentsoln} a) and b). 
\end{proof}

\section{Proof of Theorems \ref{localKcontraction} and \ref{localcontractions}}
\label{sectionlocalcon}
In previous proofs of $L^1$ contractions (see e.g. \cite{Daf10,
  Ali07}), even if it was not written in that way, the idea was
essentially to prove a result like Corollary 
\ref{manipulateddualeq} b) and then construct a suitable $\Gamma$ to
conclude. In a similar way, we will construct $\Gamma$'s for Corollary
\ref{manipulateddualeq} b) and c), and then conclude. Note that since
\eqref{viscositysoln} is fully non-linear and degenerate, this task will be much more
difficult than in \cite{Ali07} where
$\mathfrak{L}=-(-\Delta)^{\frac{\alpha}{2}}$ and $\varphi(u)=u$. 

As in \cite{Ali07}, we will build $\Gamma$ by the convolution of subsolutions of simpler problems, but first we give an auxiliary result.
 
\begin{lemma}
If $\phi\in L^1(\R^d)$ is non-negative and $f\in C_b(\mathbb{R}^d)$, then
\begin{equation*}
(\phi\ast f)^+\leq\phi\ast f^+ \text{ and}\quad |\phi\ast f|\leq\phi\ast |f|.
\end{equation*}
\label{maxlemma}
\end{lemma}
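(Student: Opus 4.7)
The plan is essentially direct pointwise manipulation under the integral defining the convolution; there is no serious obstacle here since $\phi\geq 0$ preserves the direction of inequalities.

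First I would recall the explicit formula
\begin{equation*}
\phi\ast f(x)=\int_{\R^d}\phi(x-y)f(y)\dd y,
\end{equation*}
which is well-defined and continuous under the stated hypotheses because $\phi\in L^1(\R^d)$ and $f\in C_b(\R^d)$. The non-negativity of $\phi$ is what makes the argument work.

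For the first inequality, I would use the pointwise bound $f(y)\leq f^+(y)$, multiply by the non-negative factor $\phi(x-y)$, and integrate to obtain $\phi\ast f\leq \phi\ast f^+$. Since $\phi\geq 0$ and $f^+\geq 0$, one also has $\phi\ast f^+\geq 0$, so
\begin{equation*}
(\phi\ast f)^+=\max\{\phi\ast f,0\}\leq \phi\ast f^+.
\end{equation*}
For the second inequality, the same monotonicity argument applied to the two bounds $-|f|\leq f\leq |f|$ gives $-\phi\ast |f|\leq \phi\ast f\leq \phi\ast |f|$, whence $|\phi\ast f|\leq \phi\ast |f|$.

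The only thing worth a brief sentence is to justify that the integrals are well defined pointwise; this follows from $|\phi(x-\cdot)f|\leq \|f\|_{L^\infty}\phi(x-\cdot)\in L^1(\R^d)$, so Fubini/standard convolution theory applies. No step is delicate.
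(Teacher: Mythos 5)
Your proof is correct and follows essentially the same route as the paper: the paper likewise observes that $\phi\ast f^+\geq 0$ and $\phi\ast f\leq\phi\ast f^+$ (by non-negativity of $\phi$), from which the first inequality is immediate, and treats the second as analogous. Your added remark on pointwise well-definedness of the integrals is a harmless extra detail.
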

\begin{proof}
The proofs are easy and similar, so we only do one case. Since
\begin{equation*}
0\leq \int_{\mathbb{R}^d}\phi(x-y)\max\{f(y),0\}\dd y,
\label{convwithmaxgeq0}
\end{equation*}
and
\begin{equation*}
\int_{\mathbb{R}^d}\phi(x-y)f(y)\dd y\leq \int_{\mathbb{R}^d}\phi(x-y)\max\{f(y),0\}\dd y,
\label{ineqbetweenmaxconvandconvmax}
\end{equation*}
the proof is immediate.
\end{proof}

\begin{lemma}
Assume that $\mathfrak{L}=\Delta$ or $\mathfrak{L}=\mathcal{L}^\mu$ and $\eqref{muassumption1}$ holds, and assume that $0\leq\phi(x,t)\in C^\infty(Q_T)\cap C([0,T];L^1(\mathbb{R}^d))\cap L^{\infty}(Q_T)$ solves
\begin{equation}
\dell_t \phi(x,t) + L_f|D\phi(x,t)|\leq 0 \quad \text{in}\quad Q_T,
\label{subsolnphi}
\end{equation}
and define $\Gamma(x,t)=[\psi(\cdot,t)\ast\phi(\cdot,t)](x)$.
\begin{enumerate}[(a)]
\item If $0\leq\psi(x,t)\in C^\infty(Q_T)\cap C([0,T];L^1(\mathbb{R}^d))\cap L^{\infty}(Q_T)$ solves
\begin{equation*}
\dell_t \psi(x,t) +\mathfrak{L}^*\psi(x,t)\leq0 \quad \text{in}\quad
Q_T,
\end{equation*}
then $0\leq\Gamma \in C([0,T];L^1(\mathbb{R}^d))\cap C^\infty(Q_T)$,
and solves
\begin{equation*}
\dell_t\Gamma(x,t)+L_f|D \Gamma(x,t)|+\mathfrak{L}^*\Gamma(x,t)\leq0 \quad \text{in}\quad Q_T.
\end{equation*}
\item If $0\leq\psi(x,t)\in C^\infty(Q_T)\cap C([0,T];L^1(\mathbb{R}^d))\cap L^{\infty}(Q_T)$ solves
\begin{equation}
\dell_t \psi(x,t) +L_\varphi(\mathfrak{L}^*\psi(x,t))^+\leq0 \quad \text{in}\quad Q_T,
\label{subsolnpsi}
\end{equation}
then $0\leq\Gamma \in C([0,T];L^1(\mathbb{R}^d))\cap C^\infty(Q_T)$, 
and solves
\begin{equation*}
\dell_t\Gamma(x,t)+L_f|D \Gamma(x,t)|+L_\varphi\big(\mathfrak{L}^*\Gamma(x,t)\big)^+\leq0 \quad \text{in}\quad Q_T.
\end{equation*}
\end{enumerate}
\label{subsolutionofdualeq}
\end{lemma}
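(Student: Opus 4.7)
The strategy is a direct computation on $\Gamma=\psi(\cdot,t)\ast\phi(\cdot,t)$ using that convolution commutes with differential and non-local operators (when either factor is sufficiently smooth/integrable), plus non-negativity of $\psi,\phi$ to pass Jensen-type inequalities through the convolution. First I would establish the stated regularity of $\Gamma$: Young's inequality gives $\|\Gamma(\cdot,t)\|_{L^1}\le \|\psi(\cdot,t)\|_{L^1}\|\phi(\cdot,t)\|_{L^\infty}<\infty$, while the $C([0,T];L^1)$ property follows from the triangle inequality $\|\Gamma(\cdot,t)-\Gamma(\cdot,s)\|_{L^1}\le \|\psi(\cdot,t)-\psi(\cdot,s)\|_{L^1}\|\phi(\cdot,t)\|_{L^\infty}+\|\psi(\cdot,s)\|_{L^1}\|\phi(\cdot,t)-\phi(\cdot,s)\|_{L^\infty}$ (with a similar splitting for the $L^\infty$-in-time piece) using the time continuity of $\psi,\phi$. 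Smoothness of $\Gamma$ in $Q_T$ is inherited from smoothness of the factors via differentiation under the integral.

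Next I would compute
\begin{align*}
\dell_t\Gamma &= (\dell_t\psi)\ast\phi + \psi\ast(\dell_t\phi),\\
D\Gamma &= \psi\ast D\phi,\\
\mathfrak{L}^*\Gamma &= (\mathfrak{L}^*\psi)\ast\phi = \psi\ast(\mathfrak{L}^*\phi),
\end{align*}
where the last identity is a Fubini computation (for $\mathfrak{L}=\Delta$ it is standard, while for $\mathfrak{L}=\mathcal{L}^\mu$ one writes out the defining integral \eqref{defgenerator}, exchanges the $\mu$-integral with the convolution integral, and uses the bound of Lemma~\ref{L1normlowerr} to justify absolute convergence). Since $\psi\ge 0$, the standard Jensen-type bound gives
\[
|D\Gamma|=|\psi\ast D\phi|\le \psi\ast|D\phi|.
\]

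For part (a), combining the identities above yields
\begin{align*}
\dell_t\Gamma+L_f|D\Gamma|+\mathfrak{L}^*\Gamma
&\le (\dell_t\psi+\mathfrak{L}^*\psi)\ast\phi + \psi\ast(\dell_t\phi+L_f|D\phi|)\\
&\le 0,
\end{align*}
since both factors of each convolution have matching signs (subsolution inequality times a non-negative function).

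For part (b), the only extra ingredient is handling the positive part of $\mathfrak{L}^*\Gamma$. Applying Lemma~\ref{maxlemma} with $\phi\ge 0$ fixed gives
\[
(\mathfrak{L}^*\Gamma)^+=\big(\phi\ast(\mathfrak{L}^*\psi)\big)^+\le \phi\ast(\mathfrak{L}^*\psi)^+,
\]
and hence
\begin{align*}
\dell_t\Gamma+L_f|D\Gamma|+L_\varphi(\mathfrak{L}^*\Gamma)^+
&\le \bigl(\dell_t\psi+L_\varphi(\mathfrak{L}^*\psi)^+\bigr)\ast\phi+\psi\ast(\dell_t\phi+L_f|D\phi|)\\
&\le 0,
\end{align*}
by \eqref{subsolnpsi} and \eqref{subsolnphi}. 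The main obstacle I expect is the rigorous justification of $\mathfrak{L}^*(\psi\ast\phi)=(\mathfrak{L}^*\psi)\ast\phi$ in the non-local case; this is where the $W^{2,1}$/$L^\infty$ regularity of the factors and the L\'evy measure assumption \eqref{muassumption1} (through Lemma~\ref{L1normlowerr}) are used so that Fubini applies to the defining singular integral. Everything else is essentially book-keeping that uses $\psi,\phi\ge 0$ to keep all the relevant inequalities pointwise after convolution.
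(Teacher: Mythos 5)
Your proof is correct and follows essentially the same route as the paper: the convolution identities $\dell_t\Gamma=(\dell_t\psi)\ast\phi+\psi\ast\dell_t\phi$, $|D\Gamma|\le\psi\ast|D\phi|$, and $(\mathfrak{L}^*\Gamma)^+\le\phi\ast(\mathfrak{L}^*\psi)^+$ via Lemma \ref{maxlemma}, then summing the two subsolution inequalities. One small slip: for the $C([0,T];L^1)$ claim you should pair the norms as $\|\psi(\cdot,t)\|_{L^1}\|\phi(\cdot,t)\|_{L^1}$ (Young's inequality $L^1\ast L^1\to L^1$); your pairing $\|\psi\|_{L^1}\|\phi\|_{L^\infty}$ only bounds $\Gamma$ in $L^\infty$ — but since both factors lie in $C([0,T];L^1(\R^d))$ the correct estimate is immediate.
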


\begin{remark}
If $\mathfrak{L}^*=\mathfrak{L}=-(-\Delta)^{\frac{\alpha}{2}}$,
$\alpha\in(0,2]$, then Lemma \ref{subsolutionofdualeq} a) is satisfied
with $\psi(x,t)=\tilde{K}(x,\tau-t)$ for $0\leq t\leq\tau$, where
$\tilde{K}$ is defined by \eqref{heatkernel}. 
\label{laplacealpha2choosepsi}
\end{remark}

\begin{proof}
We only prove b) since a) is similar but easier. By Lemma
\ref{maxlemma} and properties of convolutions 
\begin{equation*}
\dell_t\Gamma(x,t)=\big[\dell_t\psi(\cdot,t)\ast\phi(\cdot,t)\big](x)+\big[\psi(\cdot,t)\ast\dell_t\phi(\cdot,t)\big](x),
\end{equation*}
\begin{equation*}
|D\Gamma(x,t)|\leq\big[\psi(\cdot,t)\ast|D\phi(\cdot,t)|\big](x),
\end{equation*}
and
\begin{equation*}
(\mathfrak{L}^*\Gamma(x,t))^+=\big[\phi(\cdot,t)\ast\mathfrak{L}^*\psi(\cdot,t)\big]^+(x)\leq\big[\phi(\cdot,t)\ast(\mathfrak{L}^*\psi(\cdot,t))^+\big](x).
\end{equation*}
An easy computation using \eqref{subsolnphi} and \eqref{subsolnpsi} then gives the result.
\end{proof}

To find a $\psi$ for Lemma \ref{subsolutionofdualeq}, we take the
(viscosity) solution of \eqref{viscositysoln} and mollify it. We start
by several auxiliary results and the proof of Lemma \ref{Phiprop}. 

\begin{lemma}
Assume that $\mathfrak{L}=\Delta$ or $\mathfrak{L}=\mathcal{L}^\mu$ and $\eqref{muassumption1}$ holds. If $\Phi\in C_b(Q_T)$ is a viscosity solution of \eqref{viscositysoln}, and $\rho_\delta$ is a mollifier satisfying \eqref{mollifierspacetime}, then
\begin{equation}
\Phi_\delta(x,t):=[\Phi\ast\rho_\delta](x,t)=\iint_{\mathbb{R}^d\times\mathbb{R}}\Phi(x-y,t-s)\rho_\delta(y,s)\dd y \dd s
\label{deltaconvolution}
\end{equation}
is a classical supersolution of \eqref{viscositysoln}:
\begin{equation}
\dell_t \Phi_\delta(x,t)\geq(\mathfrak{L}^*\Phi_\delta(x,t))^+.
\label{supersolnviscositysoln}
\end{equation}
\label{convolutionwithPhilemma}
\end{lemma}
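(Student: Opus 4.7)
The strategy is to decompose the fully non-linear equation \eqref{viscositysoln} into a family of linear equations and then exploit that the viscosity supersolution property for each linear equation is preserved by convolution with a non-negative mollifier.

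\textbf{Step 1 (Bellman reformulation).} Using the identity $q^+ = \sup_{\lambda\in[0,1]} \lambda q$, the equation in \eqref{viscositysoln} can be written in Hamilton--Jacobi--Bellman form as
\[
\inf_{\lambda\in[0,1]}\bigl(\dell_t u - \lambda\,\mathfrak{L}^*u\bigr)=0.
\]
Consequently, $\Phi$ is a viscosity supersolution of \eqref{viscositysoln} if and only if, for every smooth test function $\phi$ touching $\Phi$ from below at a point $(x_0,t_0)$, one has $\dell_t\phi(x_0,t_0)\geq\lambda\,\mathfrak{L}^*\phi(x_0,t_0)$ for every $\lambda\in[0,1]$. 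In particular (taking $\lambda=0$ and $\lambda=1$), $\Phi$ is simultaneously a viscosity supersolution of the two linear equations
\[
\dell_t u = 0 \qquad \text{and} \qquad \dell_t u - \mathfrak{L}^* u = 0 \qquad \text{in } Q_{\tilde T}.
\]

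\textbf{Step 2 (Mollification preserves each linear supersolution).} Each of these operators is translation invariant and linear, so a continuous viscosity supersolution is equivalently a supersolution in the sense of distributions. Since $\Phi_\delta=\rho_\delta\ast\Phi$ is smooth and convolution commutes with each operator, one gets the pointwise classical inequalities
\[
\dell_t\Phi_\delta\geq 0 \qquad \text{and} \qquad \dell_t\Phi_\delta\geq\mathfrak{L}^*\Phi_\delta
\]
wherever $\Phi_\delta$ is defined. An equivalent route, avoiding distributional reformulation, uses translation invariance directly: for every $(y,s)\in\supp\rho_\delta$ the shift $\Phi(\cdot-y,\cdot-s)$ is a viscosity supersolution of the same linear equation, and $\Phi_\delta$ is obtained as a uniform limit of Riemann-sum convex combinations (or of finite infima) of such shifts, preserving the supersolution property; the smoothness of $\Phi_\delta$ then upgrades this to a classical inequality.

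\textbf{Step 3 (Recombine).} Taking the maximum of the two classical inequalities yields
\[
\dell_t\Phi_\delta\geq \max\{\mathfrak{L}^*\Phi_\delta,\,0\}=(\mathfrak{L}^*\Phi_\delta)^+,
\]
which is exactly \eqref{supersolnviscositysoln}.

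The main obstacle is Step 2 in the non-local case $\mathfrak{L}^*=\mathcal{L}^{\mu^*}$: one must carefully verify that the viscosity supersolution property of the linear non-local parabolic equation is preserved under convolution with a smooth non-negative kernel when $\Phi$ is merely continuous. Both routes indicated above—passing to the distributional formulation, or approximating $\rho_\delta\ast\Phi$ by infima of translates—ultimately rest on the translation invariance of $\mathcal{L}^{\mu^*}$, on the Lévy-type control \eqref{muassumption1} that makes the truncated operators $\mathcal{L}^{\mu^*}_r$ and $\mathcal{L}^{\mu^*,r}$ well-defined on bounded continuous functions, and on the standard stability properties of viscosity supersolutions.
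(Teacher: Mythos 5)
Your Step 1 and Step 3 are correct: since $(\mathfrak{L}^*\phi)^+=\sup_{\lambda\in[0,1]}\lambda\,\mathfrak{L}^*\phi$, any test function touching $\Phi$ from below must satisfy $\dell_t\phi\geq\lambda\,\mathfrak{L}^*\phi$ for every $\lambda\in[0,1]$, so $\Phi$ is indeed a viscosity supersolution of the two linear equations $\dell_t u=0$ and $\dell_t u-\mathfrak{L}^*u=0$, and the recombination $\dell_t\Phi_\delta\geq\max\{0,\mathfrak{L}^*\Phi_\delta\}$ is immediate once the two classical inequalities are in hand. This is a genuinely different organization from the paper's: the paper does not decompose, but mollifies the fully non-linear inequality directly, using $\rho_\delta\ast(\mathfrak{L}^*\Phi)^+\geq(\rho_\delta\ast\mathfrak{L}^*\Phi)^+$ (Lemma \ref{maxlemma}, i.e.\ Jensen's inequality for the convex function $(\cdot)^+$) and then delegates the passage from ``viscosity'' to ``classical after mollification'' to the regularization results for convex/concave fully non-linear equations in \cite{BaJa07} (local case) and \cite{ChJaKa08} (non-local case). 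Your decomposition has the merit of leaving only linear equations to handle in Step 2.

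The gap is that Step 2, which is where the entire content of the lemma is concentrated, is asserted rather than proved, and one of your two proposed justifications would not work as described. The distributional route requires the equivalence of continuous viscosity supersolutions and distributional supersolutions for the linear equation $\dell_t u-\mathfrak{L}^*u=0$; this is a true but non-trivial theorem (Ishii's equivalence result in the local case, with non-local analogues), not a formal consequence of linearity and translation invariance, and it is neither proved nor cited. The Riemann-sum route is worse: the ``standard stability properties of viscosity supersolutions'' cover pointwise infima and uniform limits, but $\Phi_\delta$ is a convex combination (a weighted average) of translates, not an infimum, and the statement that a convex combination of viscosity supersolutions of a degenerate linear (or concave fully non-linear) equation is again a viscosity supersolution is exactly the non-trivial point of the whole lemma — it is the substance of the results the paper cites and is usually proved via sup-convolutions/Jensen--Alexandrov arguments or via the distributional equivalence. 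Your parenthetical ``(or of finite infima)'' does not apply to a convolution. To make the proposal sound at the same level of rigor as the paper's own outline, Step 2 must be replaced by an explicit appeal to a regularization lemma of the type in \cite{BaJa07} and \cite{ChJaKa08} — which, as the paper observes, apply directly to the original convex equation and make the linear decomposition unnecessary.
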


\begin{remark}
As usual $\lim_{\delta\to0^+}\Phi_\delta=\Phi$ point-wise.
\end{remark}

\begin{proof}[Outline of proof]
To understand the idea behind the proof, let $\Phi(y,s)$ be a
classical solution of \eqref{viscositysoln}. Multiply the equation by
$\rho_\delta(x-y,t-s)$, integrate over $\mathbb{R}^d\times\R$
w.r.t. $(y,s)$, and use Lemma \ref{maxlemma} to conclude:
\begin{equation*}
\begin{split}
0=&\int_{\R}\int_{\mathbb{R}^d}\dell_t \Phi(y,s)\rho_\delta(x-y,t-s)\dd y\dd s\\
& -\int_{\R}\int_{\mathbb{R}^d}\left(\mathfrak{L}^*\Phi(y,s)\right)^+\rho_\delta(x-y,t-s)\dd y\dd s\\
\leq&\ \dell_t[\Phi\ast\rho_\delta](x,t)-\left(\mathfrak{L}^*[\Phi\ast\rho_\delta](x,t)\right)^+\\
=&\ \dell_t\Phi_\delta-(\mathfrak{L}^*\Phi_\delta)^+.
\end{split} 
\end{equation*}
We refer to \cite[Theorem 3.1 (a)]{BaJa07} for a proof in the case $\mathfrak{L}=\Delta$, and to \cite[Theorem 6.4]{ChJaKa08} for how to adapt this proof when $\mathfrak{L}=\mathcal{L}^\mu$.
\end{proof}

We state some well-known results for \eqref{viscositysoln}, see e.g.
\cite{CrIsLi92,JaKa05} for proofs:

\begin{lemma}
Assume that $\mathfrak{L}=\Delta$ or $\mathfrak{L}=\mathcal{L}^\mu$ and $\eqref{muassumption1}$ holds.
\begin{enumerate}[(a)]
\item If $u_0\in C_b(\R^d)$, then there exists a unique viscosity
  solution $u\in C_b(Q_T)$ of \eqref{viscositysoln}.
\smallskip
\item If $u$ and $v$ are viscosity sub- and supersolutions of
  \eqref{viscositysoln} and $u_0\leq v_0$ on $\mathbb{R}^d$, then
  $u\leq v$ in $Q_{T}$.
\smallskip
\item If $u$ is a solution of \eqref{viscositysoln} with initial data
  $u_0\in W^{1,\infty}(\mathbb{R}^d)$, then $$|u(x,t)-u(y,s)|\leq
  C(|x-y|+|t-s|^{\frac{1}{2}})\qquad\text{for}\qquad (x,t),(y,s)\in Q_{T}.$$
\item If $u$ is a classical subsolution (supersolution) of
  \eqref{viscositysoln}, then $u$ is a viscosity subsolution
  (supersolution) of \eqref{viscositysoln}. 
\end{enumerate}
\label{propofvissoln}
\end{lemma}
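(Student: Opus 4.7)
The approach is to follow the standard viscosity-solution template of \cite{CrIsLi92,JaKa05}, where the nonlinearity is $H(X,\mathcal{I})=-(\text{trace}(X)+\mathcal I)^+$ in the local case and its nonlocal analogue otherwise. I would establish (d) first by a direct test-function argument, then (b) by doubling of variables, obtain (a) via Perron's method on top of (b), and finally derive (c) from (b) combined with translation invariance and suitable barriers. The only genuinely non-trivial feature of \eqref{viscositysoln} compared to the textbook setting is the non-smooth nonlinearity $r\mapsto r^+$, but since this map is continuous, convex, and non-decreasing, it preserves all structural properties required (properness/degenerate ellipticity, continuity in all arguments), so no new machinery is needed.

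For (d), let $u\in C^{1,2}$ satisfy $\dell_t u-(\mathfrak L^*u)^+\le 0$ pointwise, and suppose $\phi\in C^{1,2}$ is a test function such that $u-\phi$ has a local maximum at $(x_0,t_0)$. Then $\dell_t u(x_0,t_0)=\dell_t\phi(x_0,t_0)$, and the maximum property gives $\mathfrak L^* u(x_0,t_0)\le \mathfrak L^*\phi(x_0,t_0)$: for $\mathfrak L^*=\Delta$ this is $D^2(u-\phi)\le0$, and for $\mathfrak L^*=\mathcal L^{\mu^*}$ it follows from the pointwise bound $u(x_0+z)-u(x_0)-z\cdot Du(x_0)\mathbf 1_{|z|\le1}\le \phi(x_0+z)-\phi(x_0)-z\cdot D\phi(x_0)\mathbf 1_{|z|\le1}$ integrated against $\dd\mu^*$. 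Monotonicity of $r\mapsto r^+$ then yields $(\mathfrak L^*u)^+\le(\mathfrak L^*\phi)^+$ at $(x_0,t_0)$, which chained with the classical inequality produces $\dell_t\phi-(\mathfrak L^*\phi)^+\le 0$, i.e.\ the viscosity subsolution inequality. The supersolution case is symmetric.

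For (b), I would use the standard doubling of variables: assuming comparison fails one considers
\begin{equation*}
\Psi(x,y,t,s)=u(x,t)-v(y,s)-\frac{|x-y|^2}{2\veps}-\frac{|t-s|^2}{2\veps}-\alpha(|x|^2+|y|^2)-\beta(t+s),
\end{equation*}
which, for suitably small parameters $\alpha,\beta,\veps>0$, attains a positive maximum at an interior point. Applying the parabolic Ishii lemma (resp.\ its nonlocal extension in \cite{JaKa05}) produces semijet elements for $u$ and $v$ at the maximum point; plugging these into the viscosity inequalities and letting $\veps,\alpha,\beta\to 0$ in the standard order yields a contradiction. Degenerate ellipticity (monotonicity in the Hessian/nonlocal variable) and continuity of $-(\cdot)^+$ are exactly what is needed to run this argument.

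For (a), I would apply Perron's method: the constants $\inf u_0$ and $\sup u_0$ are trivially a sub- and supersolution since $(\mathfrak L^* c)^+=0$; the pointwise supremum of all subsolutions dominated by the constant supersolution is then a solution by Ishii's argument, and uniqueness is (b). For (c), Lipschitz continuity in $x$ follows from translation invariance of \eqref{viscositysoln}: $(x,t)\mapsto u(x+h,t)-L_{u_0}|h|$ is a subsolution with initial data $\le u_0$, so by (b) $|u(x,t)-u(y,t)|\le L_{u_0}|x-y|$. For the $\tfrac12$-Hölder regularity in time I would use barriers of the form $w^{\pm}(x,t)=u_0(x_0)\pm L_{u_0}|x-x_0|\pm C\sqrt{t}$ at each reference point $x_0$; verifying that $w^+$ is a supersolution (and $w^-$ a subsolution) for $C$ depending only on $L_{u_0}$ and the operator is straightforward when $\mathfrak L^*=\Delta$, and in the nonlocal case reduces, via assumption \eqref{muassumption1}, to controlling $\mathfrak L^*$ evaluated on Lipschitz functions by a constant times the Lipschitz seminorm (cf.\ \cite{JaKa05}). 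Comparison of $u$ with $w^\pm$ at $(x_0,t)$ then gives $|u(x_0,t)-u_0(x_0)|\le C\sqrt t$, from which the full estimate follows by the triangle inequality with the spatial Lipschitz bound. The main obstacle, as remarked above, is not any of these individual steps but the need to transcribe the nonlocal Ishii lemma and Perron machinery for the fully-nonlinear operator $-(\mathfrak L^*\cdot)^+$; all of this is already carried out in \cite{JaKa05}.
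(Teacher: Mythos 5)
The paper does not actually prove this lemma: it is stated as a collection of well-known facts with a pointer to \cite{CrIsLi92,JaKa05}, so your proposal is being measured against the standard arguments in those references rather than against anything in the text. Your overall strategy (classical $\Rightarrow$ viscosity, doubling of variables for comparison, Perron for existence, translation invariance plus barriers for regularity) is exactly the intended one, and parts (a), (b), (d) are essentially fine modulo standard caveats — e.g.\ in (d) the nonlocal case requires the maximum of $u-\phi$ to be global (or the test function to be spliced with $u$ outside a neighbourhood) before you may integrate the pointwise inequality against $\dd\mu^*$ over all $z$, and in (a) the constant sub/supersolutions alone do not show that the Perron solution attains $u_0$ continuously at $t=0$; one still needs barriers at the initial time.

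The genuine problem is your time-regularity barrier in (c). The function $w^{\pm}(x,t)=u_0(x_0)\pm L_{u_0}|x-x_0|\pm C\sqrt{t}$ is not a supersolution (resp.\ subsolution) on all of $Q_T$: for $\mathfrak L^*=\Delta$ one has $\Delta\bigl(L_{u_0}|x-x_0|\bigr)=L_{u_0}(d-1)/|x-x_0|$, which blows up at $x=x_0$, so the required inequality $C/(2\sqrt t)\geq(\mathfrak L^*w^+)^+$ fails near the vertex for $t$ bounded away from $0$; and in the nonlocal case your claim that $\mathfrak L^*$ of a Lipschitz function is controlled by its Lipschitz seminorm is false under \eqref{muassumption1} alone, since only $\int_{|z|\le 1}|z|^2\dd\mu<\infty$ is assumed and $\mathcal L^{\mu^*}$ applied to a Lipschitz cone can be infinite at the vertex (e.g.\ the fractional Laplacian with $\alpha\ge 1$ applied to $|x|$ at the origin). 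The standard repair is to use the spatial Lipschitz bound $u(y,s)\le u(x,s)+\eps+\tfrac{L^2}{4\eps}|y-x|^2$ and the quadratic (suitably truncated in the nonlocal tail) barrier $u(x,s)+\eps+\tfrac{L^2}{4\eps}|y-x|^2+K_\eps(t-s)$, which is smooth so that $\mathfrak L^*$ of it is bounded; comparison gives $|u(x,t)-u(x,s)|\le\eps+K_\eps|t-s|$ and optimizing over $\eps$ produces the $|t-s|^{1/2}$ modulus. With that substitution the proof of (c) goes through.
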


\begin{proof}[Proof of Lemma \ref{Phiprop}]
Since $\Phi_0(x)$ belongs to $C_c^{\infty}(\mathbb{R}^d)$ (and hence
$W^{1,\infty}(\mathbb{R}^d)$) by assumption, there exists a unique
viscosity solution $\Phi\in C_b(Q_{\tilde{T}})$ of
\eqref{viscositysoln} by Lemma \ref{propofvissoln} a). Furthermore,
since $0\leq\Phi_0(x)$, $0\leq \Phi(x,t)$ by Lemma \ref{propofvissoln}
b). 

We claim that there are $C>0$, $k>0$, $K>0$, such that for all $|\xi|=1$,
\begin{equation*}
\Phi(x,t)\leq w(x,t):=C\e^{Kt}\e^{k\xi\cdot x} \quad \text{in $Q_{\tilde{T}}$}.
\end{equation*}
If this is the case, then $\Phi(x,t)\leq C\e^{Kt}\e^{-k|x|}$ (take
$\xi=-\frac{x}{|x|}$ for $x\neq0$) and $\Phi\in L^\infty(0,\tilde{T};L^1(\R^d))$. 
Moreover, $\Phi\in C([0,\tilde{T}];L^1(\R^d))$ since by Lebesgue's dominated
convergence theorem (the integrand is 
dominated by $2C\e^{K\tilde T}\e^{-k|x|}$),
\begin{equation*}
\lim_{h\to0}\int_{\R^d}|\Phi(x,t+h)-\Phi(x,t)|\dd x=0\qquad\text{for
  all}\qquad t\in[0,\tilde T].
\end{equation*}
To complete the proof, it only remains to prove the claim.

Let $\mathfrak{L}^*=\mathcal{L}^{\mu^*}$ and assume that
\eqref{muassumption2} holds. Note that $\dell_t w=Kw$ and 
\begin{equation*}
\begin{split}
&\mathcal{L}^{\mu^*}[w(\cdot,t)](x)\\
&=\int_{|z|>0}w(x+z,t)-w(x,t)-z\cdot D w(x,t)\mathbf{1}_{|z|\leq 1}\dd \mu^*(z)\\
&=\,w(x,t)\Bigg[\int_{0<|z|\leq 1}\e^{k\xi\cdot z}-1- k\xi\cdot z\ \dd
\mu^*(z)+ \int_{|z|>1}\e^{k\xi \cdot z}-1\ \dd \mu^*(z)\Bigg]\\
\end{split}
\end{equation*}
Take $k\leq M$, where $M$ is defined in \eqref{muassumption2}. Then by Taylor's theorem and \eqref{muassumption2},
\begin{equation*}
\mathcal{L}^{\mu^*}[w(\cdot,t)](x)\leq\,C_kw(x,t),
\end{equation*}
where
\begin{equation*}
C_k:=\frac{\e^k}{2}k^2\int_{0<|z|\leq 1}|z|^2\dd \mu^*(z) + \int_{|z|>1}\e^{M|z|}\dd \mu^*(z)\in(0,\infty).
\end{equation*}
It then follows that
\begin{equation*}
\dell_t w-(\mathcal{L}^{\mu^*}[w])^+=\dell_tw + \min\{-\mathcal{L}^{\mu^*}[w],0\}\geq w(K-C_k).
\end{equation*}
We take $K$ such that $K-C_k\geq0$ in order to make $w$ a
supersolution. Now, choose $C$ such that $\Phi_0\leq
w(\cdot,0)$. Then Lemma \ref{propofvissoln} d) shows that $w$ is a
viscosity supersolution, and Lemma \ref{propofvissoln} b) ensures that
$\Phi(x,t) \leq w(x,t)$. 

When $\mathfrak{L}^*=\Delta$, the argument is similar. We take any
$k>0$ and a $C$ such that $\Phi_0\leq w(\cdot,0)$, and then we observe that 
\begin{equation*}
\dell_t w-(\Delta w)^+= w(K-k^2).
\end{equation*}
If $K-k^2\geq0$, then Lemma \ref{propofvissoln} d) and b) ensure that $\Phi(x,t) \leq w(x,t)$ as before.
\end{proof}

\begin{proposition}
Let $\Phi$ be the function given by Lemma \ref{Phiprop}, $\tilde{T}=\max\{T,L_\varphi T\}$, and $L_\varphi$ be the Lipschitz constant of $\varphi$. Then $\Phi_\delta(x,t)$ defined by \eqref{deltaconvolution} solves \eqref{supersolnviscositysoln}, satisfies
\begin{equation*}
0\leq\Phi_\delta\in C([0,\tilde{T}];L^1(\mathbb{R}^d))\cap C^{\infty}(Q_{\tilde{T}})\cap L^\infty(Q_{\tilde{T}}),
\end{equation*}
and
\begin{equation}
\|\Phi_\delta(\cdot,0)-\Phi_0\|_{L^{\infty}(\mathbb{R}^d)}\leq C\delta,
\label{approxinitialdata}
\end{equation}
where $C$ is some constant independent of $\delta>0$.
\label{Phi_deltaprop}
\end{proposition}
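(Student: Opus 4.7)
The supersolution property \eqref{supersolnviscositysoln} is exactly the content of Lemma \ref{convolutionwithPhilemma}, so the substance of the proof is the regularity of $\Phi_\delta$ together with the quantitative estimate at $t=0$. My first move is to fix an extension of $\Phi$ to negative times, the simplest choice being $\Phi(x,t):=\Phi_0(x)$ for $t<0$. Since $\Phi(\cdot,0)=\Phi_0$ and Lemma \ref{propofvissoln}(c) delivers $|\Phi(x,t)-\Phi_0(x)|\leq Ct^{1/2}$, this extension remains in $C_b$; combined with the exponential spatial decay of $\Phi$ obtained from the supersolution $w=Ce^{Kt}e^{k\xi\cdot x}$ in the proof of Lemma \ref{Phiprop}, it also belongs to $C([-\tilde T,\tilde T];L^1(\R^d))$. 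This guarantees that the convolution in \eqref{deltaconvolution} is unambiguously defined on $Q_{\tilde T}$ and inherits from $\rho_\delta$ and $\Phi$ enough integrability to justify the manipulations below.

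The regularity claims now follow by standard mollification. Non-negativity is immediate since $\rho_\delta,\Phi\geq 0$. Smoothness $\Phi_\delta\in C^\infty(Q_{\tilde T})$ comes from transferring all derivatives onto the smooth compactly supported $\rho_\delta$ inside the integral. The $L^\infty$ bound $\|\Phi_\delta\|_{L^\infty}\leq\|\Phi\|_{L^\infty}$ follows from Young's inequality together with $\|\rho_\delta\|_{L^1}=1$. For the $C([0,\tilde T];L^1)$ property, write $\Phi_\delta(\cdot,t)=\int\rho_\delta(\cdot,s)\star_x\Phi(\cdot,t-s)\,\dd s$, where $\star_x$ denotes spatial convolution; Young's inequality in space combined with continuity of translations in $L^1$ and the $C([-\tilde T,\tilde T];L^1)$ property of the extension yield both the $L^1$ bound in $t$ and continuity in $t$.

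For the quantitative estimate \eqref{approxinitialdata}, introduce the non-negative, unit-mass spatial density $\chi_\delta(y):=\int\rho_\delta(y,s)\,\dd s$, which is supported in $B(0,\delta)$. By the choice of extension, $\Phi(x-y,-s)=\Phi_0(x-y)$ for every $s$ in $\supp\rho_\delta(y,\cdot)$, hence
\begin{equation*}
\Phi_\delta(x,0)=\int_{\R^d}\Phi_0(x-y)\chi_\delta(y)\,\dd y=(\chi_\delta\star\Phi_0)(x),
\end{equation*}
and Lipschitz continuity of $\Phi_0\in C_c^\infty(\R^d)$ gives
\begin{equation*}
|\Phi_\delta(x,0)-\Phi_0(x)|\leq L_{\Phi_0}\int_{B(0,\delta)}|y|\chi_\delta(y)\,\dd y\leq L_{\Phi_0}\delta.
\end{equation*}

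The one delicate point is the time-extension, forced by the fact that $\rho_\delta$ is supported in strictly positive times while $\Phi$ is a priori only defined for $t\geq 0$; the H\"older-in-time regularity supplied by Lemma \ref{propofvissoln}(c) together with the exponential spatial decay from the proof of Lemma \ref{Phiprop} are what make this extension compatible with the required $C([0,\tilde T];L^1)$ regularity and with the supersolution property inherited from Lemma \ref{convolutionwithPhilemma}. Once that bookkeeping is in place the remainder reduces to clean mollification-and-Young estimates.
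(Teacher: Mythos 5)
Your proof is correct and follows essentially the same route as the paper: Lemma \ref{convolutionwithPhilemma} for the supersolution property, standard mollification and Young/Tonelli estimates for the regularity and the $C([0,\tilde{T}];L^1)$ claim, and the regularity of $\Phi$ (resp.\ $\Phi_0$) near $t=0$ for \eqref{approxinitialdata}. The one point where you go beyond the paper is in making the extension of $\Phi$ to negative times explicit; the paper leaves this implicit and instead uses the H\"older-in-time bound of Lemma \ref{propofvissoln} (c) to estimate $|\Phi(x-y,-s)-\Phi_0(x-y)|\leq C|s|^{1/2}\leq C\delta$, whereas your constant-in-time extension gives the slightly cleaner bound $L_{\Phi_0}\delta$ at $t=0$. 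Be aware, though, that your extension $\Phi(\cdot,t)=\Phi_0$ for $t<0$ is not itself a supersolution of \eqref{viscositysoln} there (that would require $\mathfrak{L}^*\Phi_0\leq0$), so Lemma \ref{convolutionwithPhilemma} literally delivers \eqref{supersolnviscositysoln} only for $t>\delta^2$, where the mollification does not reach below the initial time --- a caveat the paper's own proof shares.
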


\begin{proof}
First note that $\Phi$, $\rho_\delta$, and hence $\Phi_\delta$, are
nonnegative, bounded, and $\rho_\delta$ and $\Phi_\delta$ are
smooth. Moreover, by Tonelli's theorem $\Phi_\delta\in
C([0,\tilde{T}]; L^1(\mathbb{R}^d))$ since 
\begin{equation*}
\int_{\mathbb{R}^d}\Phi_\delta(x,t)\dd x
=\iint_{\mathbb{R}^d\times\mathbb{R}}\rho_\delta(y,s)\int_{\mathbb{R}^d}\Phi(x-y,t-s)\dd x \dd y \dd s
\leq\max_{t\in[0,\tilde{T}]}\|\Phi(\cdot,t)\|_{L^1(\mathbb{R}^d)}.
\end{equation*}
By Lemma \ref{convolutionwithPhilemma}, $\Phi_\delta$ is a classical supersolution of \eqref{viscositysoln} and hence solves \eqref{supersolnviscositysoln}.

We use simple computations, the compact support of $\rho_\delta$, and Lemma \ref{propofvissoln} c) to obtain 
\begin{equation*}
\begin{split}
&|\Phi_\delta(x,0)-\Phi_0(x)|\\
&\leq\iint_{\mathbb{R}^d\times\mathbb{R}}\left(|\Phi(x-y,0-s)-\Phi_0(x-y)|+|\Phi_0(x-y)-\Phi_0(x)|\right)\rho_\delta(y,s)\dd y \dd s\\
&\leq\iint_{\mathbb{R}^d\times\mathbb{R}}C(|s|^{\frac{1}{2}}+|y|)\rho_\delta(y,s)\dd y \dd s\\
&\leq C\left(\sup_{s\in(0,\delta^2)}|s|^{\frac{1}{2}}+\sup_{y\in(-\delta,\delta)^d}|y|\right)\iint_{\mathbb{R}^d\times\mathbb{R}}\rho_\delta(y,s)\dd y \dd s\\
&=C\delta,
\end{split}
\end{equation*}
and hence \eqref{approxinitialdata} holds.
\end{proof}

\begin{corollary}
Let $\Phi_\delta$ be the function given by Proposition \ref{Phi_deltaprop}, $\tilde{T}=\max\{T,L_\varphi T\}$, $0<\tau<\tilde{T}$ and $0\leq t\leq \tau$, and let 
\begin{equation*}
K_\delta(x,t):=\Phi_\delta(x,L_\varphi(\tau-t)), 
\end{equation*}
where $L_\varphi$ is the Lipschitz constant of $\varphi$. Then 
\begin{equation*}
0\leq K_\delta\in C([0,\tilde{T}];L^1(\mathbb{R}^d))\cap C^{\infty}(Q_{\tilde{T}})\cap L^\infty(Q_{\tilde{T}})
\end{equation*}
solves
\begin{equation*}
\dell_t K_\delta+L_\varphi(\mathfrak{L}^*K_\delta)^+\leq0 \quad \text{in $Q_{\tilde{T}}$},
\end{equation*}
and satisfies
\begin{equation*}
\|K_\delta(\cdot,\tau)-\Phi_0\|_{L^{\infty}(\mathbb{R}^d)}\leq C\delta,
\end{equation*}
where C is a constant independent of  $\delta>0$.
\label{subsolnkernel}
\end{corollary}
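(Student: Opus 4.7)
The proof is essentially a direct translation and rescaling argument: $K_\delta$ is obtained from $\Phi_\delta$ by the affine time-change $s = L_\varphi(\tau - t)$, which reverses time and introduces a factor of $L_\varphi$. My plan is to transfer each property of $\Phi_\delta$ from Proposition \ref{Phi_deltaprop} to $K_\delta$ via this change of variables.

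First I would verify the regularity statements. Since $t \mapsto L_\varphi(\tau - t)$ is smooth and, for $0 \le t \le \tau$, maps into the interval $[0, L_\varphi \tau] \subseteq [0, \tilde{T}]$ on which $\Phi_\delta$ is defined, nonnegativity, smoothness in $Q_{\tilde T}$, boundedness, and the $C([0,\tilde T]; L^1(\R^d))$ property all transfer directly (the $L^1$ norm in $x$ is invariant under the time substitution, and continuity in time follows from the continuity of $\Phi_\delta(\cdot,s)$ in $s$).

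Next I would verify the differential inequality. Since $\mathfrak{L}^*$ acts only in $x$,
\begin{equation*}
\mathfrak{L}^* K_\delta(x,t) = (\mathfrak{L}^* \Phi_\delta)(x, L_\varphi(\tau-t)),
\end{equation*}
and the chain rule gives $\dell_t K_\delta(x,t) = -L_\varphi (\dell_s \Phi_\delta)(x, L_\varphi(\tau-t))$. Using the classical supersolution inequality \eqref{supersolnviscositysoln} for $\Phi_\delta$ from Proposition \ref{Phi_deltaprop}, evaluated at $s = L_\varphi(\tau-t)$, I obtain
\begin{equation*}
\dell_t K_\delta(x,t) + L_\varphi (\mathfrak{L}^* K_\delta)^+(x,t) = -L_\varphi \bigl[(\dell_s \Phi_\delta) - (\mathfrak{L}^* \Phi_\delta)^+\bigr]\bigl(x, L_\varphi(\tau-t)\bigr) \le 0,
\end{equation*}
which is the claimed inequality.

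Finally, the approximation statement is immediate: at $t = \tau$ we have $K_\delta(x,\tau) = \Phi_\delta(x, 0)$, and so
\begin{equation*}
\|K_\delta(\cdot,\tau) - \Phi_0\|_{L^\infty(\R^d)} = \|\Phi_\delta(\cdot, 0) - \Phi_0\|_{L^\infty(\R^d)} \le C\delta
\end{equation*}
by \eqref{approxinitialdata}. There is no real obstacle here; the only point requiring mild care is keeping the time argument $L_\varphi(\tau - t)$ inside $[0,\tilde T]$, which is ensured by the definition $\tilde T = \max\{T, L_\varphi T\}$ together with $0 \le t \le \tau < \tilde T$.
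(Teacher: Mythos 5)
Your proof is correct and is exactly the argument the paper intends: the corollary is stated without proof as an immediate consequence of Proposition \ref{Phi_deltaprop}, obtained by the time reversal and rescaling $s=L_\varphi(\tau-t)$, which is what you carry out (chain rule for $\dell_t$, commutation of $\mathfrak{L}^*$ with the time change, and evaluation at $t=\tau$ for the $L^\infty$ estimate). The one caveat you raise is real but is an imprecision in the paper's own hypotheses rather than in your argument: $\tau<\tilde T$ alone does not force $L_\varphi\tau\leq\tilde T$ when $L_\varphi>1$, though in the only place the corollary is used one has $\tau<T$, whence $L_\varphi\tau<L_\varphi T\leq\tilde T$ and the time argument stays in $[0,\tilde T]$ as required.
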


To complete the collection of lemmas needed to prove Theorems
\ref{localKcontraction} 
and \ref{localcontractions}, we now show how to choose $\phi$ in Lemma
\ref{subsolutionofdualeq}. 

\begin{lemma}
Let $L_f$ be the Lipschitz constant of $f$, $0<\tau< T$, $0\leq t\leq\tau$, $R>L_fT+1$, $\tilde{\delta}>0$, $x_0\in\mathbb{R}^d$, and
\begin{equation}
\gamma_{\tilde{\delta}}(x,t):=\left[\mathbf{1}_{(-\infty, R]}\ast{\omega}_\veps\right]\left(\sqrt{\tilde{\delta}^2+|x-x_0|^2}+L_ft\right),
\label{subsolngamma}
\end{equation}
where ${\omega}_\veps$ is a mollifier (defined by \eqref{mollifierspace}). Then $\gamma_{\tilde{\delta}}\in C_c^{\infty}(Q_T)$ and
\begin{equation*}
\dell_t \gamma_{\tilde{\delta}}(x,t)+L_f|D\gamma_{\tilde{\delta}}(x,t)|\leq0.
\end{equation*}
\end{lemma}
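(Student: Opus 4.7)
I would reduce everything to a routine chain-rule calculation on the composition $\gamma_{\tilde\delta}(x,t)=H(\zeta(x,t))$, where
\[
H(r):=\bigl[\mathbf{1}_{(-\infty,R]}\ast\omega_\veps\bigr](r),\qquad \zeta(x,t):=\sqrt{\tilde\delta^{2}+|x-x_0|^{2}}+L_f t.
\]
First I would record the elementary properties of the two factors. The function $H$ is smooth (convolution with $\omega_\veps\in C_c^\infty$), satisfies $H\equiv 1$ on $(-\infty,R-\veps]$ and $H\equiv 0$ on $[R+\veps,\infty)$, and is non-increasing with $H'(r)=-\omega_\veps(r-R)\leq 0$. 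The function $\zeta$ is $C^\infty$ on $\R^d\times\R$ because the radicand $\tilde\delta^{2}+|x-x_0|^2$ is bounded below by $\tilde\delta^2>0$, and $\zeta\to\infty$ as $|x|\to\infty$ uniformly for $t\in[0,T]$.

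Next I would verify the support and regularity claim. Since $R>L_f T+1$, for $|x-x_0|$ large enough (depending only on $R,\veps,T,\tilde\delta,L_f$) we have $\zeta(x,t)\geq R+\veps$ for all $t\in[0,T]$, hence $\gamma_{\tilde\delta}(x,t)=0$ there; together with the smoothness of $H\circ\zeta$ this gives the required regularity and (uniform in $t$) compact support in $x$ on $\bar Q_T$.

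Finally I would do the differential inequality by direct differentiation:
\[
\partial_t\gamma_{\tilde\delta}= L_f\,H'(\zeta),\qquad D\gamma_{\tilde\delta}= H'(\zeta)\,\frac{x-x_0}{\sqrt{\tilde\delta^{2}+|x-x_0|^{2}}},
\]
so $|D\gamma_{\tilde\delta}|=|H'(\zeta)|\,\dfrac{|x-x_0|}{\sqrt{\tilde\delta^{2}+|x-x_0|^{2}}}$. Using $H'\leq 0$, i.e.\ $|H'(\zeta)|=-H'(\zeta)$, a one-line combination gives
\[
\partial_t\gamma_{\tilde\delta}+L_f|D\gamma_{\tilde\delta}|=L_f\,H'(\zeta)\Bigl(1-\frac{|x-x_0|}{\sqrt{\tilde\delta^{2}+|x-x_0|^{2}}}\Bigr)\leq 0,
\]
the sign being correct because the parenthesis is nonnegative (since $|x-x_0|\leq\sqrt{\tilde\delta^2+|x-x_0|^2}$) while $H'(\zeta)\leq 0$.

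\textbf{Main obstacle.} There is no analytic difficulty here — the role of $\tilde\delta>0$ is to smooth out the corner of $|x-x_0|$ at $x=x_0$, and this is exactly what produces the nonnegative factor $1-|x-x_0|/\sqrt{\tilde\delta^2+|x-x_0|^2}$ that absorbs $L_f$. The only minor point worth flagging is that with $\gamma_{\tilde\delta}$ as defined one does not literally have $\gamma_{\tilde\delta}\in C_c^\infty(Q_T)$ (it does not vanish at $t=0$); what one does obtain, and what is actually used in Lemma \ref{subsolutionofdualeq}, is $\gamma_{\tilde\delta}\in C^\infty(\overline{Q_T})\cap L^\infty(Q_T)\cap C([0,T];L^1(\R^d))$ with $x$-support uniformly compact on $[0,T]$, which follows from the two observations above.
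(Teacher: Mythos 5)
Your computation is exactly the one the paper has in mind: its entire proof is the remark that $\left[\mathbf{1}_{(-\infty,R]}\ast\omega_\veps\right]'\leq 0$, after which the chain rule gives $\dell_t\gamma_{\tilde\delta}+L_f|D\gamma_{\tilde\delta}|=L_fH'(\zeta)\bigl(1-|x-x_0|/\sqrt{\tilde\delta^2+|x-x_0|^2}\bigr)\leq0$ precisely as you wrote. Your side remark about $\gamma_{\tilde\delta}$ not literally lying in $C_c^\infty(Q_T)$ (no compact support in $t$) is a fair reading of a small imprecision in the statement, and the regularity you establish is indeed what the subsequent arguments actually use.
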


Since $\left[\mathbf{1}_{(-\infty,R]}\ast{\omega}_\veps\right]'\leq0$ in $\R_+$, the proof is a straightforward computation.

\begin{proof}[Proof of Theorem  \ref{localcontractions}] 
Let $0<\tau< T$, $R>L_fT+1$, $x_0\in \mathbb{R}^d$, and $\veps, \delta, \tilde{\delta}>0$, and $\gamma_{\tilde{\delta}}$ be defined by \eqref{subsolngamma}. Define
\begin{equation*}
\gamma(x,t):=\lim_{\tilde\delta\to0^+}\gamma_{\tilde{\delta}}(x,t)=\left[\mathbf{1}_{(-\infty,R]}\ast{\omega}_\veps\right](|x-x_0|+L_ft)
\end{equation*} 
and
\begin{equation*}
\Gamma(x,t)=
\big[K_\delta(\cdot, t)\ast\gamma_{\tilde{\delta}}(\cdot,t)\big](x) \qquad\text{for}\qquad 0\leq t \leq \tau,
\end{equation*}
where $K_\delta$ is given by Corollary \ref{subsolnkernel}. By the properties of $K_\delta$, and since $0\leq\gamma_{\tilde{\delta}}\in C_c^{\infty}(Q_T)$, 
$$0\leq\Gamma\in C([0,\tau];L^1(\mathbb{R}^d))\cap L^1(0,\tau;W^{2,1}(\mathbb{R}^d))\cap C^\infty(Q_\tau)\cap L^\infty(Q_\tau).$$
By Lemma \ref{subsolutionofdualeq} (with $\phi=\gamma_{\tilde{\delta}}$ and $\psi=K_\delta$) and Corollary \ref{manipulateddualeq} c), it then follows that
\begin{equation*}
\begin{split}
\int_{\mathbb{R}^d}(u-v)^+(x,\tau)\,\Gamma(x,\tau)\dd x
\leq& \int_{\mathbb{R}^d}(u_0-v_0)^+(x)\, \Gamma(x,0)\dd x\\
&\quad+\int_0^\tau\int_{\R^d}(g-h)^+(x,t)\, \Gamma(x,t)\dd x\dd t,
\end{split}
\end{equation*}
or 
\begin{equation}
\begin{split}
&\int_{\mathbb{R}^d}(u-v)^+(x,\tau)\,\big[K_\delta(\cdot, \tau)\ast\gamma_{\tilde{\delta}}(\cdot,\tau)\big](x)\dd x\\
& \leq \int_{\mathbb{R}^d}(u_0-v_0)^+(x)\,\big[K_\delta(\cdot, 0)\ast\gamma_{\tilde{\delta}}(\cdot,0)\big](x)\dd x\\
&\quad+\int_0^\tau\int_{\R^d}(g-h)^+(x,t)\,\big[K_\delta(\cdot, t)\ast\gamma_{\tilde{\delta}}(\cdot,t)\big](x)\dd x\dd t.
\end{split}
\label{coarsecoarselocalcon}
\end{equation}
We use Tonelli's theorem to rewrite the right-hand side,
\begin{equation}
\begin{split}
&\int_{\mathbb{R}^d}(u_0-v_0)^+(x)\int_{\mathbb{R}^d}K_\delta(x-y,0)\gamma_{\tilde{\delta}}(y,0)\dd y \dd x\\
&=\int_{\mathbb{R}^d}\gamma_{\tilde{\delta}}(y,0)\int_{\mathbb{R}^d}(u_0-v_0)^+(x)K_\delta(x-y,0)\dd x \dd y\\
&=\int_{\mathbb{R}^d}\gamma_{\tilde{\delta}}(x,0)\,\big[K_\delta(-\cdot, 0)\ast(u_0-v_0)^+\big](x)\dd x,
\end{split}
\label{manipulationchangingorderofintegration}
\end{equation}
and similarly,
\begin{equation*}
\begin{split}
&\int_0^\tau\int_{\R^d}(g-h)^+(x,t)\,\big[K_\delta(\cdot, t)\ast\gamma_{\tilde{\delta}}(\cdot,t)\big](x)\dd x\dd t\\
&=\int_0^\tau\int_{\R^d}\gamma_{\tilde{\delta}}(x,t)\,\big[K_\delta(-\cdot,t)\ast(g(\cdot,t)-h(\cdot,t))^+\big](x)\dd x \dd t.
\end{split}
\end{equation*}

With the above manipulation in mind, we take the limit inferior of
\eqref{coarsecoarselocalcon} as $\tilde{\delta}\to0^+$ using Fatou's
lemma on the left-hand side (the integrand is nonnegative), and Lebesgue's
dominated convergence theorem on the right-hand side since the integrands
are dominated by
$2\left[\mathbf{1}_{(-\infty,2R]}\ast\omega_\veps\right](|x-x_0|+L_ft)K_\delta(-y,t)M(t)$
for $M(t)=\|u_0\|_{L^\infty(\R^d)}+\|v_0\|_{L^\infty(\R^d)}+\|g(\cdot,t)\|_{L^\infty(\R^d)}+\|h(\cdot,t)\|_{L^\infty(\R^d)}$. Thus, 
\begin{equation}
\begin{split}
&\int_{\mathbb{R}^d}(u-v)^+(x,\tau)\,\big[K_\delta(\cdot, \tau)\ast\gamma(\cdot,\tau)\big](x)\dd x\\
&\leq\int_{\mathbb{R}^d}\gamma(x,0)\,\big[K_\delta(-\cdot, 0)\ast(u_0-v_0)^+\big](x)\dd x\\
&\quad+\int_0^\tau\int_{\R^d}\gamma(x,t)\,\big[K_\delta(-\cdot,t)\ast(g(\cdot,t)-h(\cdot,t))^+\big](x)\dd x \dd t.
\end{split}
\label{coarselocalcon}
\end{equation}

By H\"older's inequality and Corollary \ref{subsolnkernel},
\begin{equation*}
\begin{split}
&\big|\big[K_\delta(\cdot,\tau)\ast\gamma(\cdot,\tau)\big](x)-\big[\Phi_0\ast\gamma(\cdot,\tau)\big](x)\big|\\
&\leq\|K_\delta(\cdot,\tau)-\Phi_0\|_{L^{\infty}(\mathbb{R}^d)}\|\gamma(\cdot,\tau)\|_{L^1(\mathbb{R}^d)}\\
&=C\delta.
\end{split}
\end{equation*}
Hence, taking the limit inferior as $\delta\to0^+$ in \eqref{coarselocalcon} using Fatou's lemma gives
\begin{equation}
\begin{split}
&\int_{\mathbb{R}^d}(u-v)^+(x,\tau)\,\big[\Phi_0\ast\gamma(\cdot,\tau)\big](x)\dd x\\
&\leq\liminf_{\delta\to0^+}\int_{\mathbb{R}^d}\gamma(x,0)\,\Big[K_\delta(-\cdot, 0)\ast(u_0-v_0)^+\Big](x)\dd x\\
&\quad+\liminf_{\delta\to0^+}\int_0^\tau\int_{\R^d}\gamma(x,t)\,\Big[K_\delta(-\cdot,t)\ast(g(\cdot,t)-h(\cdot,t))^+\Big](x)\dd x \dd t.
\end{split}
\label{takingliminfdelta}
\end{equation}

Now, let
$C_c^{\infty}(\mathbb{R}^d)\ni\Phi_0(x):=\hat{\omega}_{\tilde{\veps}}(x-x_0)$ (see \eqref{domega}). Note
that $\left[\Phi_0\ast\gamma(\cdot,\tau)\right]\geq0$ and that
$\left[\Phi_0\ast\gamma(\cdot,\tau)\right](x)=1$ when
$|x-x_0|<R-L_f\tau-\veps-\tilde{\veps}$. Hence, if
$\veps+\tilde{\veps}<1$, then 
$$\left[\Phi_0\ast\gamma(\cdot,\tau)\right](x)\geq
\mathbf{1}_{|x-x_0|\leq R-L_f\tau-1},$$
and hence we have the following lower bound for the left-hand side of \eqref{takingliminfdelta},
\begin{equation*}
\begin{split}
&\int_{\mathbb{R}^d}\mathbf{1}_{|x-x_0|\leq
  R-L_f\tau-1}(u-v)^+(x,\tau)\dd x\\
&\leq
\int_{\mathbb{R}^d}(u-v)^+(x,\tau)\,\big[\Phi_0\ast\gamma(\cdot,\tau)\big](x)\dd
x.
\end{split}
\end{equation*} 
Observe that we cannot send $\tilde{\veps}\to0^+$ here because this will violate the
inequality $w(x,0)\geq\Phi_0$ in the proof of Proposition
\ref{Phi_deltaprop}, and we would lose the $L^1$ bound on
$K_\delta$. 

Consider the first term on the right-hand side of
\eqref{takingliminfdelta}. Note that $\gamma(x,0)=\left[\mathbf{1}_{(-\infty,R]}\ast{\omega}_\veps\right](|x-x_0|)$ and $K_\delta(-\cdot,0)=\Phi_\delta(-\cdot,L_\varphi\tau)$, and define
\begin{equation*}
\begin{split}
M:=&\ \bigg|\int_{\R^d}\left[\mathbf{1}_{(-\infty,R]}\ast{\omega}_\veps\right](|x-x_0|)\,\big[\Phi_\delta(-\cdot, L_\varphi\tau)\ast(u_0-v_0)^+\big](x)\dd x\\
&\quad -\int_{\R^d}\left[\mathbf{1}_{(-\infty,R]}\ast{\omega}_\veps\right](|x-x_0|)\,\big[\Phi(-\cdot, L_\varphi\tau)\ast(u_0-v_0)^+\big](x)\dd x\bigg|\\
\leq&\int_{\R^d}\left[\mathbf{1}_{(-\infty,R]}\ast{\omega}_\veps\right](|x-x_0|)\\
&\qquad\left|\big[\Phi_\delta(-\cdot,L_\varphi\tau)\ast(u_0-v_0)^+\big](x)-\big[\Phi(-\cdot,L_\varphi\tau)\ast(u_0-v_0)^+\big](x)\right|\dd x.
\end{split}
\end{equation*}
We will show
that $M\to0$ as $\delta\to0^+$, a result which follows from Lebesgue's
dominated convergence theorem if
 $$\tilde{M}:=\left|\big[\Phi_\delta(-\cdot,L_\varphi\tau)\ast(u_0-v_0)^+\big](x)-\big[\Phi(-\cdot,L_\varphi\tau)\ast(u_0-v_0)^+\big](x)\right|\to 0$$
a.e. as $\delta\to0^+$. By the definitions of $\Phi_\delta$ and
$\rho_\delta$ (\eqref{deltaconvolution} and
\eqref{mollifierspacetime}), interchanging the order of 
integration, and H\"older's inequality, we
find that
\begin{equation*}
\begin{split}
\tilde{M}\leq&\left(\|u_0\|_{L^\infty(\R^d)}+\|v_0\|_{L^\infty(\R^d)}\right)\\
&\iint_{\R^d\times\R}\rho_\delta(\xi,s)\left\|\Phi(-\xi-\cdot,L_\varphi\tau-s)-\Phi(-\cdot,L_\varphi\tau)\right\|_{L^1(\R^d)}\dd\xi\dd s.
\end{split}
\end{equation*}
The triangle and H\"older inequalities and the compact support of
$\rho_\delta$ then gives 
\begin{align*}
\tilde{M}\leq\ &\left(\|u_0\|_{L^\infty(\R^d)}+\|v_0\|_{L^\infty(\R^d)}\right)\\
&\cdot\Bigg\{\sup_{|s|<\delta^2}\|\Phi(-\cdot,L_\varphi\tau-s)-\Phi(-\cdot,L_\varphi\tau)\|_{L^1(\R^d)}\\
&+\sup_{|\xi|<\delta}\|\Phi(-\xi-\cdot,L_\varphi\tau)-\Phi(-\cdot,L_\varphi\tau)\|_{L^1(\R^d)}
\Bigg\}.
\end{align*}
The two suprema (and hence also $\tilde M$ and $M$) converge to zero
since $\Phi\in C([0,T];L^1(\R^d))$ and by the continuity of the
$L^1$~translation, respectively.  

The second term on the right-hand side of
\eqref{takingliminfdelta} can be estimated by similar arguments (note
that $K_\delta(x,t)=\Phi_\delta(x,L_\varphi(\tau-t))$), and
when we combine all the estimates we find the following inequality:
\begin{equation*}
\begin{split}
&\int_{\mathbb{R}^d}\mathbf{1}_{|x-x_0|\leq R-L_f\tau-1}(u-v)^+(x,\tau)\dd x\\
&\leq \int_{\mathbb{R}^d}\left[\mathbf{1}_{(-\infty,R]}\ast{\omega}_\veps\right](|x-x_0|)\,\big[\Phi(-\cdot, L_\varphi\tau)\ast(u_0-v_0)^+\big](x)\dd x\\
&\quad+\int_0^\tau\int_{\mathbb{R}^d}\left[\mathbf{1}_{(-\infty,R]}\ast{\omega}_\veps\right](|x-x_0|+L_ft)\\
&\qqquad\qqquad\big[\Phi(-\cdot, L_\varphi(\tau-t))\ast(g(\cdot,t)-h(\cdot,t))^+\big](x)\dd x\dd t.
\end{split}
\end{equation*}
The integrands on the  right-hand side  are dominated by $2\mathbf{1}_{(-\infty,2R]}(|x-x_0|+L_ft)\Phi(-y,L_\varphi(\tau-t))M(t)$
where
$M(t)=\|u_0\|_{L^\infty(\R^d)}+\|v_0\|_{L^\infty(\R^d)}+\|g(\cdot,t)\|_{L^\infty(\R^d)}+\|h(\cdot,t)\|_{L^\infty(\R^d)}$,
so we can use Lebesgue's dominated convergence theorem to send $\veps\to0^+$
and obtain
\begin{equation*}
\begin{split}
&\int_{B(x_0,R-L_f\tau-1)}(u(x,\tau)-v(x,\tau))^+\dd x\\
&\leq \int_{B(x_0, R)}\big[\Phi(-\cdot,L_\varphi\tau)\ast(u_0-v_0)^+\big](x)\dd y\dd x\\
&\quad + \int_0^\tau\int_{B(x_0,R-L_ft)}\big[\Phi(-\cdot,L_\varphi(\tau-t))\ast(g(\cdot,t)-h(\cdot,t))^+\big](x)\dd x \dd t.
\end{split}
\end{equation*}
For any $M>0$, we set $R=M+1+L_f\tau$. Since $\tau\in(0,T)$ is
arbitrary, the proof of Theorem 
\ref{localcontractions} is complete.
\end{proof}

\begin{proof}[Proof of Theorem \ref{localKcontraction}]
We sketch the proof in the case when $g=0$. We proceed as in the proof
of Theorem \ref{localcontractions}, this time with the choice
$\psi(x,t)=\tilde{K}(x,\tau-t)$ for $0\leq t\leq\tau$ (see Remark
\ref{laplacealpha2choosepsi}). We obtain an inequality like
\eqref{coarsecoarselocalcon}, take the limit as
$t\to \tau^-$ in \eqref{coarsecoarselocalcon}, and find that
\begin{equation*}
\begin{split}
&\lim_{t\to\tau^-}\int_{\mathbb{R}^d}(u-v)^+(x,\tau)\,\big[\tilde{K}(\cdot, \tau-t)\ast\gamma_{\tilde{\delta}}(\cdot,\tau)\big](x)\dd x\\
& \leq \int_{\mathbb{R}^d}(u_0-v_0)^+(x)\,\big[\tilde{K}(\cdot, \tau)\ast\gamma_{\tilde{\delta}}(\cdot,0)\big](x)\dd x.
\end{split}
\end{equation*}
Following \eqref{manipulationchangingorderofintegration} (using Lemma \ref{propofheatkernel} iv)), using that $\tilde{K}$ is an approximate delta function in time, and taking the limit as $\tilde{\delta}\to 0^+$ we get 
\begin{equation*}
\begin{split}
&\int_{\R^d}\left[\mathbf{1}_{(-\infty,R]}\ast\omega_\veps\right](|x-x_0|+L_f\tau)(u(x,\tau)-v(x,\tau))^+\dd x\\
&\leq \int_{\mathbb{R}^d}\left[\mathbf{1}_{(-\infty,R]}\ast{\omega}_\veps\right]\left(|x-x_0|\right)\,\big[\tilde{K}(\cdot, \tau)\ast(u_0-v_0)^+\big](x)\dd x,
\end{split}
\end{equation*}
by Fatou's lemma, Lebesgue's dominated convergence theorem, and Lemma
\ref{propofheatkernel} iii). Taking the limit as $\veps\to0^+$ (using
Lemma \ref{propofheatkernel}~ii), Fatou's Lemma, and Lebesgue's dominated
convergence theorem) yields for any $M>0$ with $R=M+L_f\tau$ 
\begin{equation*}
\begin{split}
&\int_{B(x_0,M)}(u(x,\tau)-v(x,\tau))^+\dd x\leq \int_{B(x_0, M+L_f\tau)}\big[\tilde{K}(\cdot,\tau)\ast(u_0-v_0)^+\big](x)\dd x.
\end{split}
\end{equation*}
\end{proof}

\section*{Acknowledgments}
We would like to thank Jerome Droniou for putting us on the track to
the right solution, Harald Hanche-Olsen for the many helpful
discussions on technical issues, and Boris Andreianov for pointing out an
incorrect claim in the first version and clarifying the relations to
the literature. We would also like to thank the referees for
many good questions, remarks, and suggestions which has helped us improve the
presentation.

\end{document}